\newcommand{\jump}[1]{[\![#1]\!]}
\newcommand{\triple}[1]{|\!|\!|#1|\!|\!|}
\newtheorem{proposition}{Proposition}[section]
\newtheorem{corollary}[proposition]{Corollary}
\newtheorem{theorem}[proposition]{Theorem}
\numberwithin{equation}{section}
\title{A Time-Dependent Wave-Thermoelastic Solid  Interaction}
\date{\today \\ \it This paper is dedicated to Wolfgang L. Wendland \\[1mm] on the occasion of his 80th birthday.}
\author{
{\sc G. C. Hsiao$^*$, T. S\'anchez-Vizuet$^\dagger$, F.--J.Sayas$^*$ \& R. J. Weinacht$^*$}\\
{\small $*$Department of Mathematical Sciences,  University of Delaware, USA.}\\
{\small {\tt \{ghsiao, fjsayas, weinacht\}@udel.edu}}\\
{\small $\dagger$ Courant Institute of Mathematical Sciences, New York University, USA.}\\
{\small {\tt tonatiuh@cims.nyu.edu}}
}
\begin{document}
\maketitle

\begin{abstract}
\noindent 
This paper presents a combined field and boundary integral equation method for solving the time-dependent scattering problem of a thermoelastic body immersed in a compressible,
inviscid  and  homogeneous fluid. The approach here is a generalization of the coupling procedure employed by the authors for the treatment of the time-dependent fluid-structure interaction problem. Using an integral representation of the solution in the infinite exterior domain occupied by the fluid, the problem is reduced to one defined only over the finite region occupied by the solid, with nonlocal boundary conditions. The nonlocal boundary problem is analyzed with Lubich's approach for time-dependent boundary integral equations. Existence and uniqueness results are established in terms of time-domain data with the aid of Laplace-domain techniques. Galerkin semi-discretization approximations are derived and error estimates are obtained.  A full discretization based on the Convolution Quadrature method is also outlined.  Some numerical experiments in 2D are also included in order to demonstrate the accuracy and efficiency of the procedure.
\end{abstract}

\noindent
{\bf Key words}:  Fluid-structure interaction,  Coupling BEM-FEM,  Kirchhoff representation formula, Retarded potential, Time-domain boundary integral equation,  Variational formulation, Wave scattering, Convolution quadrature.

\noindent
{\bf Mathematics Subject Classifications}:  35J20, 35L05, 45P05, 65N30, 65N38.
%
›\section{Introduction}
%
The mathematical study of the thermodynamic response of a linearly elastic solid to mechanical strain dates back at least to Duhamel's 1837 pioneering work \cite{Duhamel:1837}  on thermoelastic materials where he proposed the constitutive relation linking the temperature variations and elastic strains with the thermoelastic stress now known as \textit{Duhamel-Neumann law} \cite{Ca:1972, Maugin:2014}.

Kupradze's encyclopedic works \cite{Ku:1979} can be considered the standard reference for a modern mathematical treatment of the purely thermoelastic problem. The dynamic problem is dealt with in more recent works like \cite{OrWa:1992,Wagner:1994} where the matrix of fundamental solutions for the dynamic equations is revisited, while \cite{Jakubowska:1982,Jakubowska:1984} provide generalized Kirchhoff-type formulas for thermoelastic solids. 

In the case of the scattering of thermoelastic waves, major theoretical contributions have been made by \c Cakoni and Dassios in \cite {CaDa:1998}. The  unique solvability of a boundary integral formulation is established in \cite{Cakoni:2000}  and  the interaction of elastic and thermoelastic waves is explored for homogeneous materials in  \cite{DaKo:1994}.  The study of time-harmonic interaction between a scalar field and a thermoelastic solid has been the subject of works like \cite{Lopatev:1979} where the interface is taken to be a plane, or \cite{LiRa:1983,JeNa:1997} where time-harmonic scattering by bounded obstacles is considered.

In this paper, we present a combined field and boundary integral method for a time-dependent fluid-thermoelastic solid interaction problem. The approach here is a generalization of the method employed by the authors for treating time-dependent fluid -structure interaction problems in 
\cite{HSW:2013, HsSaSa:2016}. The present communication is an improvement over those previous efforts in the sense that it considers a more general constitutive law that accounts for the coupling between elastic and thermal effects. To our knowledge no attempt has been made to investigate with rigorous justifications the time-dependent acoustic scattering by a thermoelastic obstacle.

The setting is introduced in Section \ref{sec:2}, along with the physical assumptions and the constitutive relation under consideration leading to the time-domain system of governing equations. The problem is then recast in Section \ref{sec:3} where the Laplace domain system is transformed into an equivalent integro-differential non-local problem that will be formulated variationally for discretization later on. The question of existence and uniqueness of the solutions to the non-local problem is dealt with on Section \ref{sec:4}. The error analysis of the proposed discretization is addressed in Section \ref{sec:5}, where semi-discrete error estimates for spatial discretization are provided.

The final Section \ref{sec:6} discusses the computational considerations related to the numerical solution of the discrete problem. A full discretization using Convolution Quadrature (CQ) in time is outlined and the coupling of boundary and finite elements for the spatial discretization is discussed. Convergence experiments in 2D are performed for test problems in both frequency and time domains as a demonstration of the applicability of the formulation which remains valid also in 3D. For time discretization both second order backward differentiation formula (BDF2) and Trapezoidal Rule CQ are used, providing evidence that the approximation is stable and of second order globally. Time-domain illustrative experiments using the proposed formulation are included. 

In closing, we remark that for homogeneous thermoelastic  solid medium, a pure boundary integral equation formulation may be adapted as in the fluid-structure interaction problem \cite{HsSaSa:2016}.  We will pursue these investigations in a separate communication. 
%
\section{Formulation of the problem}\label{sec:2}
Consider a thermoelastic solid with constant density $\rho_\Sigma$ in an undeformed reference configuration and at thermal equilibrium at temperature $\Theta_0$. Under the action of external forces the body will be subject to internal stresses that will induce local variations of temperature. Reciprocally, if a heat source induces a change in temperature, the body will react by dilating or contracting and this will create internal stresses and deformations. We will denote by $U$ the elastic deformation with respect to the reference configuration and by $\Theta$ the variation of temperature with respect to the equilibrium temperature. In the classical linear theory \cite{Ku:1979,LaLi7:1986}, the coupling between the mechanic strain and the thermal gradient is modeled by the Duhamel-Neumann law which defines the \textit{thermoelastic stress} $\boldsymbol \sigma(\mathbf U,\Theta)$ and the \textit{thermoelastic heat flux} $\mathbf F(\mathbf U,\Theta)$  (also known as free energy) as functions of the elastic displacement and the variation in temperature by
\begin{align*}
\boldsymbol\sigma :=\,& \mathbf C\boldsymbol\varepsilon(\mathbf U) - \zeta\Theta\mathbf I\,,\\
\label{eq:c5:1b}
\mathbf F :=\,& -\eta\,\frac{\partial \mathbf U}{\partial t} + \kappa \nabla\Theta.
\end{align*}
In the previous expressions
\[
\bm \varepsilon ({\bf U}) := \frac{1}{2}( \nabla{\bf U} + (\nabla{\bf U})^t)
\]
is the elastic strain tensor, $\mathbf I$ is the $3\times3$ identity matrix, $\kappa$ is the \textit{thermal diffusivity} coefficient, which from physical principles \cite{HaOz:2012} is required to be positive, $\zeta$ is the product of the volumetric \textit{thermal expansion} coefficient and the bulk modulus of the material, and $\eta$ is given by the relation
\begin{equation*}\label{eq:c5:2}
\eta = \Theta_0\zeta/c_{vol}.
\end{equation*}
Here the volumetric heat capacity $c_{vol}$ is the ratio between the thermal diffusivity and the thermal conductivity, and it can also be expressed as the product of the mass density and the specific heat capacity. For the case of homogeneous isotropic material that we are considering, the \textit{elastic stiffness tensor} $\mathbf C$ is  given by
\[
\mathbf C_{ijkl}:= \lambda\delta_{ij}\delta_{kl} +\mu(\delta_{ik}\delta_{jl} + \delta_{il}\delta_{jk}), 
\]
where the constants $\lambda$ and $\mu$ are Lam\'e's second parameter and the shear modulus respectively, and $\delta_{ij}$ is Kronecker's delta.
 
We are concerned with a time-dependent direct scattering problem in fluid-thermoelastic solid  interaction, which can be simply described as follows: an acoustic wave propagates in a
fluid domain of infinite extent in which a bounded thermoelastic body is immersed. Throughout the paper, we let $\Omega_-$ be the bounded domain in $\mathbb{R}^3$ occupied by the thermoelastic  body with a Lipschitz boundary $\Gamma$ and we let  $\Omega_+ := \mathbb{R}^3  \setminus  \overline{\Omega}_-$  be its exterior, occupied by a compressible fluid.  The problem is then to determine the scattered velocity potential $V$ in the fluid domain,  the deformation of the solid $\mathbf U$ and the variation of the temperature $\Theta$  in the obstacle.  It is assumed that $|\Theta/\Theta_0|<<1$. 

The governing equations of  the displacement field $\bf {U}$ and temperature field  $\Theta$ are  the thermo-elastodynamic equations:
\begin{alignat}{6}
\rho_\Sigma \frac{\partial^2\mathbf{U}} {\partial t^2} - \Delta^{*} \mathbf{U} + \zeta\,  \nabla\Theta 
 =\,& &\mathbf{0}  &\quad& \hbox{ in }\Omega_- \times (0,T),  \label{eq:2.1}\\[2mm]
 \frac{1}{\kappa}\frac{\partial \Theta}{\partial t} - \Delta \Theta +  \eta\; \frac{\partial}{\partial t}(\nabla\cdot\mathbf{U})&  =\,&  0 &\quad& \hbox{ in }\Omega_- \times (0,T),    \label{eq:2.2}
\end{alignat}
where $T$ is a given positive final time, and as usual the symbol $\Delta^*$ is  the Lam\'e  operator defined by
\[
 \Delta^* \mathbf{U} :=  \mu \Delta \mathbf{U} + (\lambda + \mu) \nabla(\nabla\cdot \mathbf U).
\]
We remark that if the thermal effect is neglected ($\zeta=0$) Duhamel-Neumann's law reduces to the usual expression for Hooke's law of the classical theory for an arbitrary isotropic medium (see, e.g.  \cite{Ca:1972, Ku:1979}).  In the thermoelastic medium, the given physical constants $ \rho_\Sigma, \lambda, \mu, \zeta, \eta,
\kappa$, are assumed to satisfy the inequalities:
\[
\rho_\Sigma > 0, \;\;\mu > 0 , \;\;3 \lambda + 2 \mu > 0, \;\;\frac{\zeta}{\eta} > 0, \;\;\kappa > 0.
\]
In the  fluid domain  $\Omega_+$,  we consider a barotropic and irrotational  flow of an inviscid and compressible  fluid with density $\rho_f$ as in \cite{HSW:2013}.  The formulation can be presented in terms of a scalar potential $V = V(x,t)$ such that the scattered velocity field $\mathbf V$ and the pressure $P$ are given by
$$
{\bf V} =  - \nabla\; V \quad\mbox{and}\quad 
P= {\rho_f} \frac{\partial V}{\partial t}. 
$$ 
Then we arrive  at the wave equation 
\begin{equation}\label{eq:2.3}
\frac{1}{c^2} \frac{\partial^2 V}{\partial  t^2} - \Delta V = 0 \quad\mbox{in}\quad \Omega_+ \times (0, T)
\end{equation}
where $c$ is the sound speed.  

On the interface $\Gamma$ between the solid and the fluid we have  the transmission conditions 
\begin{subequations}
\begin{alignat}{6}
 \boldsymbol\sigma(\mathbf U, \Theta)^- {\bf n} =\, & - {\rho_f}\;( \frac{\partial V}{\partial t} + 
  \frac{\partial V^{inc}}{\partial t})^+{\bf n} &\quad& \hbox{ on }\Gamma \times (0, T),  \label{eq:2.4}\\
 \frac{ \partial {\bf U}^-} {\partial t} \cdot {\bf n} = \, & - (\frac{\partial  V}{\partial n} +
  \frac{\partial  V^{inc}}{\partial n})^+  &\quad& \hbox{ on }\Gamma \times (0, T), \label{eq:2.5}\\
  \frac{\partial \Theta}{\partial n}^-  = \, & 0 &\quad& \hbox{ on }\Gamma \times (0, T), \label{eq:2.6}
\end{alignat}
\end{subequations}
 where ${\bf n}$ is the exterior unit normal to $\Omega_-$, and $V^{inc} $ denotes the given incident field, which is assumed to be supported away from $\Gamma$ at $t=0$. Here and in the sequel, we adopt the notation that $q^{\mp}$ denotes the limit of the function $q$ 
on $\Gamma$ from $\Omega_\mp$ respectively. Regarding the transmission conditions we remark that from the physical point of view, equation \eqref{eq:2.4} enforces the equilibrium of pressure at the solid-fluid interface, the condition \eqref{eq:2.5} expresses the continuity of the normal component of the velocity field, and \eqref{eq:2.6} refers to a thermally insulated body. We  assume the causal initial conditions
\begin{subequations}
\begin{align}
 {\bf U}(x,t) = \frac{\partial {\bf U}(x,t)}{\partial t} = {\bf 0},
 \quad \Theta(x, t) = 0 \quad & \mbox{for}\quad  x \in \Omega_-,\;\; t \leq 0, \label{eq:2.7}\\
 V(x,t) = \frac{\partial V} {\partial t} (x,t) = 0,\quad & \mbox{for} \quad  x\in \Omega_+,\;\; t \leq 0. \label{eq:2.8}
\end{align}
\end{subequations}
We will study the the time-dependent scattering problem consisting of the partial differential equations (\ref{eq:2.1})-(\ref{eq:2.3}) together with the transmission conditions  (\ref{eq:2.4})-(\ref{eq:2.6})  and  the homogeneous initial conditions 
(\ref{eq:2.7})-(\ref{eq:2.8}).
%
\section{Reduction to a nonlocal problem}\label{sec:3}
%
In order to apply Lubich's approach as in the case of fluid-structure interaction \cite{HSW:2013, HsSaSa:2016},  we first need to transform the initial-boundary transmission problem (\ref{eq:2.1})-(\ref{eq:2.6}) in the Laplace domain. Then we reduce the corresponding problem to a nonlocal boundary value problem. We begin with the Laplace transform for a restricted class of distributions. Let $X$ be a Banach space and $\mathcal S(\mathbb R)$ denote the Schwartz class of functions. We say that $F: \mathcal S(\mathbb R)\rightarrow X$ is a causal tempered distribution with values in $X$ if it is a continuous linear map such that
\[
F(\varphi) = 0 \quad \forall\varphi\in\mathcal S (\mathbb R) \hbox{ such that } supp~\varphi \subset (-\infty,0).
\]
For such a distribution and
\[
s\in\mathbb C_+ :=\{ s \in \mathbb C : \mathrm{Re}~s > 0\},
\]
the Laplace transform of $F$  can be defined in a natural way by 
\[
f(s)= \mathcal{L}\{F\}(s) := \int_0^\infty e^{-st} F(t) dt,
\] 
where the integral must be understood in the sense of Bochner \cite{Sa:2015}. We remark that the Laplace transform can be defined for a much broader class of distributions \cite{BeWo:1966,DaLi:1992}, but this restricted class suffices for the current application.

Let then
\[
\mathbf{u}:=  \mathbf{u}(x,s)= \mathcal{L}\{ {\bf U}(x,t)\}, \;\;  \theta:=\theta(x,s) = \mathcal{L}\{{\Theta(x,t)} \},\;\;   v:=v(x,s) = \mathcal{L}\{V(x,t)\}.
\]
Then the initial-boundary transmission problem consisting of
(\ref{eq:2.1}) - (\ref{eq:2.6}) in the Laplace transformed domain becomes  the following transmission boundary value problem: 
\begin{subequations}\label{eq:3}
\begin{alignat}{6}
 - \Delta^{*} \mathbf{u} +  \rho_\Sigma s^2 \mathbf{u} + \zeta \nabla\theta   = \,& \mathbf{0}
	&\quad& \mbox{in}\quad \Omega_-, \label{eq:3.1} \\
 - \Delta \theta + \frac{s}{\kappa} ~\theta + s~ \eta ~\nabla\cdot \mathbf{u}  =\,&  0 
	&\quad& \mbox{in} \quad \Omega_-, \label{eq:3.2} \\
-\Delta v + \frac{s^2}{c^2} ~v = \,& 0 
	&\quad& \mbox{in}\quad  \Omega_+, \label{eq:3.3}\\
\bm{\sigma} (\mathbf{u}, \theta )^- \mathbf{n} + \rho_f\; s v^+ \mathbf{n} =\,& - \rho_fs v^{inc}\mathbf{n} 
	&\quad& \mbox{on}\quad \Gamma, \label{eq:3.4}\\
s ~\mathbf{u}^- \cdot \mathbf{n} + \frac{ \partial v}{\partial n}^+ =\,& -\frac {\partial v^{inc}}{\partial n} 
	&\quad& \mbox{on}\quad \Gamma,  \label{eq:3.5}\\
\frac{\partial \theta}{\partial n}^- = \,& 0  
	&\quad& \mbox{on} \quad \Gamma. \label{eq:3.6}
\end{alignat}
\end{subequations}
We remark that \eqref{eq:3} is an exterior scattering  problem for which normally a radiation condition is needed in order to guarantee the uniqueness  of the solution. However, in the present case no additional radiation condition is required and global $H^1$ behavior at infinity suffices. 
 
To derive a proper nonlocal boundary problem, as usual, we begin via Green's third identity with 
the representation of the solutions of (\ref{eq:3.3}) in the form: 
\begin{equation}\label{eq:3.7}
 v = \mathcal{D}(s) {\phi} - \mathcal{S}(s) {\lambda} \quad \mbox{in} \quad \Omega_+,
\end{equation}
where ${\phi}:= v^+(s)$ and ${\lambda}:= \partial v^+ /\partial n$ are the Cauchy data for $v$ in (\ref{eq:3.3}) and $\mathcal{S}(s)$ and $\mathcal{D}(s)$ are the simple-layer and double-layer potentials, respectively defined by 
\begin{alignat}{6}
\label{eq:3.8}
\mathcal{S}(s) {\lambda} (x) :=\,& \int_\Gamma  E_{s/c}(x,y) {\lambda}(y) d\Gamma_y, 
	&\quad& x \in \mathbb R^3\setminus\Gamma, \\
\label{eq:3.9}
\mathcal{D}(s) {\phi} (x)  :=\,& \int_\Gamma \frac{\partial}{\partial n_y} E_{s/c}(x,y) {\phi}(y)  d\Gamma_y, 
	&\quad& x \in \mathbb R^3\setminus\Gamma.
\end{alignat}
Here 
$$E_{s/c}(x,y) =  \frac{e^{- s/c~ |x-y|}} {4 \pi |x-y|}$$
 is the fundamental solution of the  operator  in  (\ref{eq:3.3}). By  standard arguments in potential theory, we have the relations for the the Cauchy data ${\lambda}$ and ${\phi} $:
\begin{equation}\label{eq:3.10}
\begin{pmatrix}
{\phi} \\[3mm]
{\lambda}\\
\end{pmatrix}
= \left (
\begin{matrix}  
      \frac{1}{2}I + K(s) & -V(s) \\[3mm]
       -W(s)  &  ( \frac{1}{2}I - K(s))'  \\
    \end{matrix}
    \right )\begin{pmatrix}
{\phi} \\[3mm]
{\lambda}\\
\end{pmatrix}
\quad \mbox{on} \quad \Gamma.
 \end{equation}
Here $V, K, K^\prime $ and $W$  are the four  basic boundary integral operators familiar from potential theory \cite{HsWe:2008}  such that 
\begin{alignat*}{6}
V(s) \lambda (x) :=\,& \int_\Gamma  E_{s/c}(x,y) {\lambda}(y) d\Gamma_y 
 	&\quad& x \in \Gamma,\\
K(s){\phi} (x)  :=\,& \int_\Gamma \frac{\partial}{\partial n_y} E_{s/c}(x,y){\phi}(y)  d\Gamma_y 
	&\quad& x \in \Gamma,\\
K^\prime(s)\lambda (x) :=\,& \int_\Gamma  \frac{\partial}{\partial n_x} E_{s/c}(x,y) {\lambda}(y) d\Gamma_y 
 	&\quad& x \in \Gamma,\\
W(s) {\phi} (x)  :=\,&- \frac{\partial}{\partial n_x}  \int_\Gamma \frac{\partial}{\partial n_y} E_{s/c}(x,y){\phi}(y)  d\Gamma_y 
	&\quad& x \in \Gamma.
\end{alignat*}
By using the transmission condition  (\ref{eq:3.5}), we obtain  from the second  boundary integral equation in  (\ref{eq:3.10}),  
\begin{equation}\label{eq:3.11}
-s \mathbf{u}^-  \cdot \mathbf{n} + W(s){\phi}   - ( \tfrac{1}{2} I -  K(s))'  {\lambda} =
\frac{\partial v^{inc }}{\partial n} \;\quad \mbox{on}\quad \Gamma
\end{equation} 
while  the first boundary integral equation in (\ref{eq:3.10})  is  simply   
\begin{equation} \label{eq:3.12}
( \tfrac{1}{2}I - K(s)) {\phi} + V(s) {\lambda} = 0 \quad \mbox{on} \quad \Gamma.
\end{equation}
With the Cauchy data $\phi$ and $\lambda$ as new unknowns, the partial differential equation 
(\ref{eq:3.3}) in $\Omega_+$  may  be eliminated.  This leads to a {\it nonlocal boundary value problem}
in $\Omega_-$ for the unknowns $(\mathbf u, \theta, \phi, \lambda)$  satisfying the partial differential equations  (\ref{eq:3.1}), (\ref{eq:3.2}), and the boundary integral equations (\ref{eq:3.11}), (\ref{eq:3.12}) together with the conditions (\ref{eq:3.4}) and (\ref{eq:3.6})
on $\Gamma$.

Here and in the sequel let $\gamma^{\mp} $ and $\partial_n^{\mp}$ denote trace operators of the functions and their normal derivatives from inside and outside  $\Gamma$, respectively. We will use the symbol $(\cdot,\cdot)_{\mathcal O}$ interchangeably to denote the scalar, vector, or Frobenius $L^2$ inner products of functions defined on the open set $\mathcal O$, while the angled brackets $\langle\cdot,\cdot\rangle_{\Gamma}$ will be reserved for pairings between elements of the trace space and its dual. All the forms will be kept linear and conjugation will be done explicitly when needed. Finally, the space $\mathbf H^1(\mathcal O)$ should be understood as the Cartesian product of copies of the standard scalar Sobolev space $H^1(\mathcal O)$ endowed with the natural product norm.

Let us first consider the unknowns $(\mathbf u,\theta) \in {\mathbf H}^1(\Omega_-)\times H^1(\Omega_-)$. Then multiplying (\ref{eq:3.1}) by the testing function ${\mathbf v}$ and integrating by parts, we obtain the weak formulation of (\ref{eq:3.1}): 
\begin{equation}\label{eq:3.13}
a({\mathbf u} , {\mathbf v}; s) - \langle {\bm \sigma}(\mathbf u,\theta)\mathbf n , \gamma^-{\mathbf v}\rangle_{\Gamma}  - \zeta(\theta, \; \nabla\cdot\mathbf v)_{\Omega_-} = 0, 
\end{equation}
where  $ a(\cdot,\cdot; s)$ is the bilinear form defined by 
\begin{equation}\label{eq:3.14}
a(\mathbf u, \mathbf v; s): = \left( \mathbf C\bm \varepsilon (\mathbf u) , \bm \varepsilon ( \mathbf v)\right)_{\Omega_-}  + s^2\rho_\Sigma( \mathbf u, \mathbf v)_{\Omega_-}.
\end{equation}
In terms of the transmission condition (\ref{eq:3.4}), we obtain from (\ref{eq:3.13})
\begin{equation}\label{eq:3.15}
a({\mathbf u} , {\mathbf v}; s)   - \zeta (\theta, \; \nabla\cdot\mathbf v)_{\Omega_-} + 
\rho_f s \langle \phi~ {\mathbf n} , \gamma^-{\mathbf v}\rangle_{\Gamma}  = - s\rho_f  \langle v^{inc} \mathbf n, \gamma^- \mathbf v \rangle_{\Gamma}.
\end{equation}
Similarly, multiplying (\ref{eq:3.2}) by the test function $\vartheta$, integrating by parts and making use of the condition \eqref{eq:3.6}, we have
\begin{equation}\label{eq:3.16}
b( \theta, \vartheta; s)  + s~ \eta(\nabla\cdot\mathbf u, \vartheta)_{\Omega_-}= 0
\end{equation}
with
\begin{equation}\label{eq:3.17}
b(\theta, \vartheta;s) :=  (\nabla \theta, \nabla \vartheta)_{\Omega_-} +  \frac{s}{\kappa} ( \theta, \vartheta)_{\Omega_-}.
\end{equation}

Now let 
\begin{alignat*}{6}
\mathbf A_s:\;& \mathbf H^1(\Omega_-) &\;\longrightarrow\;&\; (\mathbf H^1(\Omega_-))' \\
 &\;\mathbf u \;&\; \longmapsto \;&\; a(\mathbf u, \,\cdot\,;s) \\
B_s :\;&  H^1(\Omega_-)\;& \;\longrightarrow\;&\; (H^1(\Omega_-))' \\ & \; \theta \;&\; \longmapsto \,& \; b(\theta,\,\cdot\,;s) 
\end{alignat*}
be the operators associated to the bilinear forms \eqref{eq:3.14} and \eqref{eq:3.17}, respectively.  Then from \eqref{eq:3.15}, \eqref{eq:3.17}, \eqref{eq:3.11}, and \eqref{eq:3.12}, the nonlocal problem may be formulated as a system of  operator equations: Given data $(d_1, d_2, d_3, d_4)  \in X ^\prime,$ find $(\mathbf u, \theta, \phi, \lambda) \in X $ such that 
\begin{equation}\label{eq:3.18}
\pmb{\mathscr{A}}    \begin{pmatrix}
      \mathbf{u}   \\
      \theta \\
      \phi\\
      \lambda \\
   \end{pmatrix}:=
\left (   \begin{matrix}
   \mathbf A_{s}& -\zeta~( div)^{\prime} & s ~\rho_f\;  {\gamma^-}^{\prime} ~\mathbf n &  0 \\
   s ~\eta ~div  & B_{s} &0&0  \\
   - s~ {\mathbf n}^{\top} \gamma^- & 0 & W(s) & \!\!\! - ( \frac{1}{2} I -  K(s))' \\
   0 & 0 & \!\!\! \frac{1}{2}I - K(s) & V(s)\\
   \end{matrix} 
\right )
   \begin{pmatrix}
    \mathbf{u}   \\
    \theta\\
    \phi\\
    \lambda \\
    \end{pmatrix}
= \begin{pmatrix}
d_1\\
d_2\\
d_3\\
d_4
\end{pmatrix}. 
\end{equation}
In the above expression $\mathbf n^\top$ denotes the transpose (row) of the outward unit normal (column) to $\Gamma$, and data $(d_1,d_2,d_3, d_4)$ is given by
\begin{equation} \label{eq:3.19}
d_1 = -s~ \rho_f\; {\gamma^-}^{\prime}(\gamma^+v^{inc}{\mathbf n}), \quad d_2 = 0, \quad 
d_3 =  {\partial_n^+ v^{inc}}, \quad  d_4 = 0.
\end{equation}
We have made use of the product spaces:
\begin{align*}
X := \,& {\mathbf H}^1( \Omega_-)  \times H^1(\Omega_-) \times H^{1/2}(\Gamma) \times H^{-1/2}(\Gamma), \\
X^{\prime} :=\,&  ({\mathbf H}^1(\Omega_-))^{\prime} \times (H^1(\Omega_-) )^{\prime} \times H^{-1/2}(\Gamma) \times H^{1/2}(\Gamma)
\end{align*}
(i. e., $X^{\prime}$ is the dual of $X$).  Our aim is to show that equation \eqref{eq:3.18} has a unique solution in $X$. We will do this in the next section. 
%
\section{Existence and uniqueness results}\label{sec:4}
%
Before considering the existence and uniqueness results, we first discuss the invertibility of  the operator $\pmb{\mathscr{A} }$  in \eqref{eq:3.18}.  We begin with the definitions of  the following energy norms:
\begin{alignat}{6}
\triple{\mathbf u}_{|s|, \Omega_-}^2 :=\,& ( \mathbf C\boldsymbol\varepsilon( {\mathbf u}),  \bm{\varepsilon} (\bar {\mathbf {u}} ) )_{\Omega_-} +  \rho_\Sigma \|  s \; \mathbf u \|^2_{\Omega_-}, \quad & \mathbf u \in {\mathbf H}^1(\Omega_-),  \label{eq:4.1}\\
 \triple{\theta}^2_{|s|, \Omega_-} :=\, &\| \nabla \theta\|^2_{\Omega_-}  + \kappa^{-1}
 \| \sqrt{ |s|} \; \theta \|_{\Omega_-}^2 ,\quad & \theta \in H^1(\Omega_-),  \label{eq:4.2}\\
 \triple{v}^2_{|s|, \Omega_+} :=\,&  \| \nabla v\|^2_{\Omega_+} +  c^{-2} \| s \;v \|^2_{\Omega_+}, \quad & v \in H^1(\Omega_+). \label{eq:4.3}
\end{alignat}
For the complex Laplace parameter $s$ we will denote
\[
\sigma:= \mathrm{Re}~s, \quad \underline{\sigma}:= \min\{1,\sigma\},
\]
and will make use of the following equivalence relations for the norms
 \begin{gather}
\underline{\sigma }\triple{\mathbf{u}}_{1, \Omega_-} \leq \triple{\mathbf{u}}_{|s|, \Omega_-}\leq
\frac{|s|}{\underline{\sigma}} \triple{\mathbf{u}}_{1, \Omega_-} ,\label{eq:4.4}\\
\sqrt{\underline{\sigma}}\triple{\theta}_{1, \Omega_-} \leq \triple{\theta}_{|s|, \Omega_+}  \leq 
\sqrt{\frac{|s|}{\underline {\sigma} } }\triple{\theta}_{1,\Omega_-},   \label{eq:4.5} \\
\underline{\sigma } \triple{v}_{1, \Omega_+} \leq \triple{v}_{|s|, \Omega_+}\leq
\frac{|s|}{\underline{\sigma}} \triple{v}_{1, \Omega_+} ,\label{eq:4.6}
 \end{gather}
which can be obtained  from the  inequalities: 
\[
\min\{1, \sigma\} \leq \min\{1, |s|\},\quad \mbox{and} \quad  \max\{1, |s|\} \min\{1, \sigma\}
 \leq |s|,~ \;\forall  s \in \mathbb{C}_+.
\]
We remark that the norms $\triple{\theta}_{1, \Omega_-}$  and  $ \triple{ v}_{1, \Omega_+}$ are  equivalent to  $\| \theta\|_{H^1(\Omega_-)} $ and  $ \|{ v}\|_{H^1(\Omega_+)}$, respectively,  and  so is the energy norm $\triple{\mathbf{u}}_{1, \Omega_-}$  equivalent to the $\mathbf{H}^1(\Omega_-)$ norm of ${ \mathbf{u}} $ by the second Korn  inequality \cite{Fi:1972}.

For the invertibility of $\pmb{\mathscr{A} }$, let  us introduce the diagonal matrix $\pmb{\mathscr{Z}}=\pmb{\mathscr{Z}}(s)$:
\begin{equation}\label{eq:4.7} 
\pmb{\mathscr{Z}} = \left( \begin{matrix}
1 & 0 & 0 &0\\
0 & \zeta (\eta |s|)^{-1}& 0&0\\
0& 0 & \rho_f& 0\\
0 & 0 & 0  &\rho_f\\
\end{matrix}
\right )\
\end{equation}
and consider the modified operator
\begin{equation}\label{eq:4.8}
\pmb{\mathscr{B}} :=
\pmb{\mathscr{Z}} \pmb{\mathscr {A}} =  
\left (   \begin{matrix}
   \mathbf A_{s}& -\zeta~( div)^{\prime} & s ~\rho_f\;  {\gamma^-}^{\prime} ~\mathbf n &  0 \\
   s{|s|^{-1}} ~\zeta ~div  & \zeta~ (\eta~ |s|)^{-1} B_{s} &0&0  \\
   - s~ \rho_f ~ {\mathbf n}^{\top} \gamma^- & 0 & \rho_f~W(s) &  - \rho_f ( \frac{1}{2} I -  K(s))' \\
   0 & 0 & \rho_f ( \frac{1}{2}I - K(s)) &\rho_f V(s)\\
   \end{matrix} 
\right ).
\end{equation}
It will be clear that in order to show the invertibility of $\pmb{\mathscr{A} }$ it suffices to prove that of $\pmb{\mathscr{B} }$. By the Gaussian elimination procedure (as in \cite{LaSa:2009b}), a simple computation shows that $\pmb{\mathscr{B}} $ can be decomposed in the form:
\begin{equation} \label{eq:4.9}
\pmb{\mathscr{B}} = \pmb{\widetilde{\mathscr{P}}}\, \pmb{\mathscr{C}}\,\pmb{\mathscr{P}},
\end{equation}
where
\[ 
\pmb{\mathscr{C}} = 
\left ( \begin{matrix}
   \mathbf A_{s}& -\zeta~( div)^{\prime} & s ~\rho_f\;  {\gamma^-}^{\prime} ~\mathbf n &  0 \\
   s{|s|^{-1}} ~\zeta ~div  & \zeta~ (\eta~ |s|)^{-1} B_{s} &0&0  \\
   - s~ \rho_f ~ {\mathbf n}^{\top} \gamma^- & 0 & \rho_f~ (W(s) + C_{\Gamma}(s)) & 0 \\
   0 & 0 & 0 & \rho_f V(s)\\
   \end{matrix} 
\right ), 
\]
\[
\widetilde{\pmb{\mathscr{P}}} = 
\left ( \begin{matrix}
I &0&0 & 0\\
0 & I& 0 & 0\\
 0 & 0& I &  - (\frac{1}{2} I - K(s))' V^{-1}(s)\\
 0& 0& 0& I
 \end{matrix}\right ) \quad \mbox{and} \quad 
\pmb{\mathscr{P}} = 
\left ( \begin{matrix}
I &0&0 & 0\\
0 & I& 0 & 0\\
 0 & 0& I & 0\\
 0& 0&V^{-1}(s) ( \frac{1}{2}I - K(s))&I
 \end{matrix}\right )
\]
with  $ C_{\Gamma}(s) =  (\frac{1}{2} I -  K(s))' V^{-1}(s)(\frac{1}{2} I -  K(s))$.

Since both $\pmb{\mathscr{P}}$ and $\widetilde{\pmb{\mathscr{P}}}$ are invertible, the invertibility of $\pmb{\mathscr{C}}$  will imply that of $\pmb{\mathscr{B}}$, but as in the time-dependent fluid-structure interaction \cite{HSW:2013},  the operator matrix $\pmb{\mathscr{C}} $ is indeed invertible. In fact,  it is not difficult to show that the operator matrix $\pmb{\mathscr{C}} $ is {\em strongly elliptic}  \cite{HsWe:2008, Mi:1970}  in the sense that
\begin{eqnarray}\label{eq:4.10}
 \mathrm{Re} \Big\{ \langle Z(s)\,\pmb{\mathscr{C}}  (\mathbf v, \vartheta, \psi, \chi) , \overline{ (\mathbf v, \vartheta, \psi, \chi) } \rangle \Big\}
 &\geq & C (\zeta, \eta, \rho_f ) \; \frac {\sigma \underline{\sigma}^3}{|s|^2}   \| (\mathbf v, \vartheta, \psi, \chi)\|^2_X 
 \end{eqnarray}
for all $(\mathbf{v}, \vartheta, \psi, \chi) \in X $, where $C (\zeta, \eta, \rho_f )$ is a constant depending only on the physical parameters and on the geometry of $\Omega_-$,  and  $ Z(s)$ is the matrix defined by 
\[
Z(s):= \left (\begin{matrix}
\bar{s}/|s|&0 &0&0\\
0& 1&0 &0 \\
0&0& \bar{s}/|s| & 0\\
0&0&0& s/|s|
\end{matrix} \right ).
\]
The action of $Z(s)$ amounts only for a rotation that reveals the elliptic nature of the original system. As such, the analyticity of $Z(s)$ is immaterial, since only the inverse Laplace transform of $(\mathbf u, \theta, \varphi, \lambda)$ is sought for. As for the proof of \eqref{eq:4.10}, we will just point out that it is simple to show that
\begin{alignat}{6}
\mathrm{Re} \Big\{ & 
 -\bar{s}/|s| (\zeta~( div)^{\prime} \vartheta,  \bar{\mathbf v } )_{\Omega_-} 
+  \bar{s}/|s| (s ~\rho_f\;  {\gamma^-}^{\prime} \;\psi~\mathbf n , \bar{\mathbf v} )_{\Omega_-} & \nonumber\\
\label{eq:4.10a}
& +   s/|s|(\zeta ~div ~\mathbf v, \bar{\vartheta} )_{\Omega_-} 
- \bar{s}/|s|   \langle s~ \rho_f ~ {\mathbf n}^{\top} \gamma^- \mathbf v, \bar{\psi}  \rangle_{\Gamma}
\Big\} 
 = & \quad 0. 
\end{alignat}
Further details are omitted,  since a similar proof will be repeated when we discuss the existence and uniqueness results for the solution of the nonlocal problem \eqref{eq:3.18}.

We now return to the solutions of the modified system of equations \eqref{eq:4.8} from \eqref{eq:3.18}:
\begin{equation}\label{eq:4.12}
\pmb{\mathscr{B}}  \begin{pmatrix}
      \mathbf{u}   \\
      \theta \\
      \phi\\
      \lambda \\
   \end{pmatrix}:=
\pmb{\mathscr{Z}} \pmb{\mathscr{A}}  \begin{pmatrix}
      \mathbf{u}   \\
      \theta \\
      \phi\\
      \lambda \\
   \end{pmatrix} =\pmb{\mathscr{Z}}  \begin{pmatrix}
   d_1\\
d_2\\
d_3\\
d_4
 \end{pmatrix} =  \begin{pmatrix}
 d_1\\
\zeta/\eta ~|s|^{-1} ~d_2\\
\rho_f \;d_3\\
\rho_f\; d_4
\end{pmatrix}.
\end{equation}
Suppose that $(\mathbf{u}, \theta,  \phi, \lambda) \in X$ is a solution of (\ref{eq:4.12}).  Let
\begin{equation}\label{eq:4.13}
v:= \mathcal{D}(s) \phi -  \mathcal{S}(s) \lambda  \quad \mbox{in} \quad \mathbb{R}^3 \setminus \Gamma.
\end{equation}
Then  $v\in H^1(\mathbb{R}^3 \setminus \Gamma)$  is the solution of the transmission problem:
\begin{equation}\label{eq:4.14}
 - \Delta v + (s/c)^2 v = 0 \quad \mbox{in}\quad  \mathbb{R}^3 \setminus \Gamma
 \end{equation}
satisfying  the following jump relations across $\Gamma$,  
\[
\jump{\gamma  v}:=  \gamma^+ v - \gamma^- v = \phi \in H^{1/2}(\Gamma), \quad  \jump{\partial_n v}  :=\partial_n^+ v - \partial_n^- v = \lambda \in H^{-1/2}(\Gamma).
\]
First, from (\ref{eq:4.14}) we see that 
\begin{alignat}{6}
\label{eq:4.15} 
 {\mathbf A}_{ s} \; {\mathbf u } - \zeta ~(div)' \theta  +  s\; \rho_f  {\gamma^-}^{\prime}~\jump{\gamma v} \;\mathbf n =\;& d_1 \quad && \mbox{in} \quad \Omega_- ,\\
\label{eq:4.16}
\frac{s}{|s|}  \zeta~ div \;\mathbf u + \frac{\zeta}{\eta} \frac{1}{|s|} B_{s}~\theta  = \;& \frac{\zeta}{\eta}\frac{1}{|s|} d_2  \quad && \mbox{in} \quad   \Omega_-  , \\
\label{eq:4.17}
-s ~{ \rho_f \; \mathbf n}\cdot \; \gamma^- {\mathbf u }- \rho_f  \partial_n^+ v = \; & \rho_f ~d_3 \quad && \mbox{on} \quad \Gamma, \\
\label{eq:4.18}
-  \rho_f \; \gamma^- v = \; & \rho_f ~d_4 \quad && \mbox{on} \quad \Gamma. 
\end{alignat}
Since $ d_4 =0 $, this means that $u$ is a solution of the homogeneous Dirichlet problem for the partial differential equation (\ref{eq:4.14}) in $\Omega_-$. Hence by the uniqueness of the the solution, we obtain  $v \equiv 0$ in  $\Omega_-$. Consequently, we have
\begin{equation} \label{eq:4.19}
 \jump{\gamma v} = \gamma^+ v = \phi \quad \mbox{and} \quad \jump{\partial_n v} = \partial_n^+ v = \lambda.
\end{equation}

Next, we  consider the variational formulation of the problem for equations  \eqref{eq:4.14} , \eqref{eq:4.15}  and \eqref{eq:4.16} together with the boundary conditions \eqref{eq:4.17} and  \eqref{eq:3.6}.   We seek  a  solution
\begin{equation*}
(\mathbf{u}, \theta, v) \in \pmb{\mathbb{H}} =  {\mathbf H}^1(\Omega_-) \times H^1(\Omega_-) \times H^ 1(\Omega_+)
\end{equation*}
with the corresponding  test functions  $(\mathbf{v}, \vartheta, v )$ in the same function space.
 To derive the variational equations, we should keep in mind that the variational formulation should be formulated not in terms of  the Cauchy data $\phi$ and $\lambda$ directly, but only through the jumps $\jump{\gamma v}$ and $\jump{\partial_n v} $ as indicated. 
 
Let  $(f,g) \in  H^1(\Omega_+) \times  H^1(\Omega_+)$, we introduce the bilinear form
\[
c_{\Omega_+} (f, g; s):= \left(\nabla f,\nabla g\right)_{\Omega_+} + (s/c)^2\left(f,g\right)_{\Omega_+},
\]
and its associated operator
\begin{alignat*}{6}
C_{s, \Omega_+}:&& \; H^1(\Omega_+) \;& \longrightarrow\;& \left(H^{1}(\Omega_+)\right)'\\
 	&&\; f \;& \longmapsto \;& c_{\Omega_+}(f,\;\cdot\;;s)
\end{alignat*}
Note that in these definitions the domain of integration is indicated explicitly. Using this notation we can use the first Green formula for equation \eqref{eq:4.14} and condition \eqref{eq:4.17} to obtain
\begin{align}
\nonumber 
\rho_f  (C_{s, \Omega_+ }\, v,w) _{\Omega_+} =\; &- \rho_f \langle \partial^+_n v, \gamma^+ w 
 \rangle\\
\label{eq:4.20}
 =\;& \rho_f  \langle  d_3, \gamma^+ w\rangle_{\Gamma}  +  \rho_f  \langle  s ~\gamma^- \mathbf{u}  \cdot \; \mathbf{n}, \gamma^+ w  \rangle_{\Gamma}.   
\end{align} 
Together with the weak formulations of \eqref{eq:4.15} and \eqref{eq:4.16},  we arrive at the following variational formulation: Find $(\mathbf{u}, \theta,  v) \in  \pmb{\mathbb{H}} $ satisfying
\begin{alignat}{6} 
({\mathbf A}_{s}\mathbf{u}, \mathbf v )_{\Omega_-}
\! \! - \!  \zeta( \theta, div \;  \mathbf v )_{\Omega_-}\! \! + \!s \rho_f \langle  \gamma^+ v \; \mathbf n ,  \gamma^- \mathbf v \rangle_{\Gamma}   = \,&  (d_1,  \mathbf v  )_{\Omega_-} \quad && \forall ~\mathbf v \in {\mathbf H}^1(\Omega_-), \nonumber \\
 \label{eq:4.21}
 \frac{\zeta}{|s|\eta}(B_{s}\;  \theta, ~ \vartheta   )_{\Omega_-}  + \frac{s}{|s|}  \zeta ~ ( div ~  \mathbf u,    {\vartheta } )_{\Omega_-} =\;& \frac{\zeta}{|s|\eta} ( d_2 , ~ {\vartheta})_{\Omega_-} \quad&& \forall ~ \vartheta \in H^1(\Omega_-), \\
 \nonumber
 \rho_f  (C_{s, \Omega_+ }\, v,w)_{\Omega_+} -  s~\rho_f  \langle  \gamma^- \mathbf{u} \; , ~  {\gamma^+ v} ~\mathbf{n} \rangle_{\Gamma}  =\; &
\rho_f  \langle  d_3,  {\gamma^+ w} \rangle_{\Gamma} \quad&& \forall ~ w \in H^1(\Omega_+).
 \end{alignat}
We remark that by construction, it can be shown that  as in \cite{Sa:2015} this variational problem is equivalent  to   the transmission problem defined by \eqref{eq:4.14}, \eqref{eq:4.15}, \eqref{eq:4.16},
 and  \eqref{eq:4.17}. The latter is equivalent to the nonlocal problem defined by \eqref{eq:4.12}, which is equivalent to \eqref{eq:3.18}.   Consequently, the variational problem \eqref{eq:4.21} is equivalent to the nonlocal problem \eqref{eq:3.18}.  Hence for the existence of the solution of  \eqref{eq:3.18},  it is sufficient to show the existence of the solution of \eqref{eq:4.21}.
 
We  have the following basic results.
\begin{theorem}\label{th:4.1}
The variational problem {\em (\ref{eq:4.21})} has a unique solution $(\mathbf{u}, \theta, v) \in
 \pmb{\mathbb{H}} $. Moreover,  the following estimate holds:
\begin{equation} \label{eq:4.22}
 ( \triple{\mathbf{u}}^2_{|s|, \Omega_-} + \triple{\theta}^2_{|s|, \Omega_-} + \triple{v}^2_{|s|, \Omega_+} )^{1/2}
\leq c_0 \; \frac{|s|^{3/2} } {\sigma {\underline{\sigma}}^{3/2}} ~ \|(d_1, 0, d_3, 0)\|_{X^{\prime}}, 
\end{equation}
where $c_0$ is  a constant depending only on the physical parameters $ \rho_f, \zeta, \eta $. 
\end{theorem}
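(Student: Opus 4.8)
The plan is to collapse the three equations of \eqref{eq:4.21} into a single bilinear form on $\pmb{\mathbb{H}}\times\pmb{\mathbb{H}}$ and solve it by the Lax--Milgram lemma, with the whole difficulty concentrated in a coercivity estimate. Writing $U=(\mathbf u,\theta,v)$ and $\Phi=(\mathbf v,\vartheta,w)$, I would let $\mathcal B(U,\Phi;s)$ denote the sum of the left-hand sides of the three identities in \eqref{eq:4.21} and $\ell(\Phi):=(d_1,\mathbf v)_{\Omega_-}+\rho_f\langle d_3,\gamma^+w\rangle_\Gamma$ the corresponding right-hand side (recall $d_2=d_4=0$), so the problem reads $\mathcal B(U,\Phi;s)=\ell(\Phi)$ for all $\Phi$. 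No boundary integral operators enter at this stage, since the exterior field is carried by the coercive volume form $c_{\Omega_+}$.

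The heart of the matter is testing against the rotated, conjugated argument $\Phi=\big(\tfrac{\bar s}{|s|}\overline{\mathbf u},\,\overline\theta,\,\tfrac{\bar s}{|s|}\overline v\big)$, i.e. the diagonal scaling $Z(s)$ already used in \eqref{eq:4.10}, and taking real parts. The three diagonal contributions are handled by $\mathrm{Re}\{\tfrac{\bar s}{|s|}a(\mathbf u,\overline{\mathbf u};s)\}=\tfrac{\sigma}{|s|}\triple{\mathbf u}_{|s|,\Omega_-}^2$ and $\mathrm{Re}\{\tfrac{\bar s}{|s|}c_{\Omega_+}(v,\overline v;s)\}=\tfrac{\sigma}{|s|}\triple{v}_{|s|,\Omega_+}^2$, which follow from the positivity of $\mathbf C$ and from $\mathrm{Re}(\bar s\,s^2)=\sigma|s|^2$, together with $\mathrm{Re}\{\tfrac{\zeta}{|s|\eta}b(\theta,\overline\theta;s)\}=\tfrac{\zeta}{|s|\eta}\big(\|\nabla\theta\|_{\Omega_-}^2+\tfrac{\sigma}{\kappa}\|\theta\|_{\Omega_-}^2\big)\geq\tfrac{\zeta\sigma}{\eta|s|^2}\triple{\theta}_{|s|,\Omega_-}^2$, where $\kappa>0$ is used. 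The crucial point is that the four off-diagonal coupling terms contribute nothing to the real part: with the bilinear $L^2$ convention, the elastic--thermal pair $-\tfrac{\bar s}{|s|}\zeta(\theta,\mathrm{div}\,\overline{\mathbf u})_{\Omega_-}+\tfrac{s}{|s|}\zeta(\mathrm{div}\,\mathbf u,\overline\theta)_{\Omega_-}$ has the form $z-\overline z$, and likewise the fluid--solid boundary pair $|s|\rho_f\langle\gamma^+v\,\mathbf n,\gamma^-\overline{\mathbf u}\rangle_\Gamma-|s|\rho_f\langle\gamma^-\mathbf u\cdot\mathbf n,\gamma^+\overline v\rangle_\Gamma$ equals $|s|\rho_f(\overline J-J)$, both purely imaginary. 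This is exactly the content of \eqref{eq:4.10a} and is where the specific signs of the transmission conditions \eqref{eq:3.4}--\eqref{eq:3.5} pay off. Collecting the surviving diagonal terms yields $\mathrm{Re}\,\mathcal B(U,Z(s)\overline U;s)\geq C(\zeta,\eta,\rho_f)\tfrac{\sigma\underline\sigma}{|s|^2}\big(\triple{\mathbf u}_{|s|,\Omega_-}^2+\triple{\theta}_{|s|,\Omega_-}^2+\triple{v}_{|s|,\Omega_+}^2\big)$.

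Since $Z(s)$ is a fixed invertible diagonal scaling with unit-modulus entries, the rotated form $\mathcal B(\cdot,Z(s)\,\cdot\,;s)$ is bounded on $\pmb{\mathbb{H}}$ and coercive in the sense above, so the Lax--Milgram lemma applied to this equivalent problem provides a unique $U\in\pmb{\mathbb{H}}$ solving \eqref{eq:4.21}; uniqueness is also immediate, as $\ell\equiv0$ forces all three energy norms to vanish. Because the text preceding the statement already establishes the equivalence of \eqref{eq:4.21} with the nonlocal problem \eqref{eq:3.18}, this simultaneously settles existence and uniqueness there.

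For the estimate \eqref{eq:4.22}, I would bound the data functional at the rotated test argument, using that $\bar s/|s|$ has modulus one, by $|\ell(Z(s)\overline U)|\leq\|d_1\|_{(\mathbf H^1(\Omega_-))'}\|\mathbf u\|_{\mathbf H^1(\Omega_-)}+\rho_f\|d_3\|_{H^{-1/2}(\Gamma)}\|\gamma^+v\|_{H^{1/2}(\Gamma)}$, then pass to the energy norms through the equivalences \eqref{eq:4.4}, \eqref{eq:4.6} and the trace theorem, each costing a factor $\underline\sigma^{-1}$; chaining this with the coercivity lower bound and rearranging delivers \eqref{eq:4.22}. The one point deserving care is that the temperature carries no data ($d_2=0$) and is controlled only through the coupling, so its energy norm inherits the weakest power of $|s|$ and is precisely the term dictating the $|s|^{3/2}$ growth on the right of \eqref{eq:4.22}. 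I expect the genuine obstacle to be the coupling cancellation of the second paragraph --- getting every conjugate and every $s/|s|$ versus $\bar s/|s|$ aligned so that the real parts annihilate --- whereas the remaining work, namely tracking the powers of $|s|$ and $\underline\sigma$ through the norm equivalences to land on the stated exponents, is routine but must be carried out attentively.
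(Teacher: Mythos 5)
Your proposal follows essentially the same route as the paper's proof: test \eqref{eq:4.21} against the $Z(s)$-rotated conjugate of the solution, observe that the volume coupling pair and the interface coupling pair each combine into an expression of the form $z-\bar z$ and therefore vanish under the real part, and read off the three diagonal identities \eqref{eq:4.25}--\eqref{eq:4.27}. Making the Lax--Milgram structure explicit is a reasonable addition, since the paper only records the a priori estimate and leaves the existence mechanism implicit; your identification of the off-diagonal cancellation as the crux, and of $\kappa>0$ and $\mathrm{Re}(\bar s s^2)=\sigma|s|^2$ as the inputs for the diagonal terms, matches the paper exactly.

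One caveat on the final step. If you chain the uniform coercivity bound $\mathrm{Re}\,\mathcal B(U,Z(s)\overline U;s)\geq C\,\tfrac{\sigma\underline\sigma}{|s|^2}\bigl(\triple{\mathbf u}^2_{|s|,\Omega_-}+\triple{\theta}^2_{|s|,\Omega_-}+\triple{v}^2_{|s|,\Omega_+}\bigr)$ directly against $|\ell(Z(s)\overline U)|\leq C\underline\sigma^{-1}\|(d_1,0,d_3,0)\|_{X'}\bigl(\cdots\bigr)^{1/2}$, you land on $|s|^2/(\sigma\underline\sigma^2)$, which is \emph{weaker} than the stated $|s|^{3/2}/(\sigma\underline\sigma^{3/2})$. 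To recover the claimed exponent you must use quantitatively the observation you already made qualitatively --- that $\ell$ does not see $\theta$: keep the weighted quantity $F^2:=\triple{\mathbf u}^2_{|s|,\Omega_-}+\tfrac{\zeta}{\eta|s|}\triple{\theta}^2_{|s|,\Omega_-}+\rho_f\triple{v}^2_{|s|,\Omega_+}$ intact, deduce $F\leq C\tfrac{|s|}{\sigma\underline\sigma}\|(d_1,0,d_3,0)\|_{X'}$ from $\tfrac{\sigma}{|s|}F^2\leq|\ell|\leq C\underline\sigma^{-1}\|(d_1,0,d_3,0)\|_{X'}F$ (the right-hand side is controlled by $F$ because it involves only $\mathbf u$ and $v$), and only then pay the factor $\bigl(\max\{1,|s|\}\bigr)^{1/2}\leq(|s|/\underline\sigma)^{1/2}$ to unweight the $\theta$ term. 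This two-stage bookkeeping is what the paper performs tacitly and is precisely where the $3/2$ powers in \eqref{eq:4.22} come from; without it your argument proves the theorem only with a larger power of $|s|/\underline\sigma$.
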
 
 \begin{proof} 
Starting with the system \eqref{eq:4.21}, a simple computation shows that
\begin{align}
\nonumber
\mathrm{Re} \Big\{\frac{\bar{s}}{|s|} ({\mathbf A}_{s} \mathbf u, \overline{\mathbf u})_{\Omega_-}\!\! +  \frac{\zeta}{\eta|s|}(B_{s}\theta ,\overline{\theta})_{\Omega_-} \!\! & + \frac{\bar{s}}{|s|}\rho_f (C_{s, \Omega_+} v, \overline{v})_{\Omega_+} \Big\} \\
\label{eq:4.24}
= \;&  \mathrm{Re}\Big\{\frac{\overline s}{|s|}(d_1,\overline{\mathbf u})_{\Omega_-} + \frac{\zeta}{\eta}\frac{1}{|s|}(d_2,\overline\theta)_{\Omega_-} + \frac{\overline s}{|s|}\rho_f\langle d_3,\overline{\gamma^+v}\rangle_\Gamma \Big\}.
\end{align}
On the other hand, it is not hard to verify that
\begin{align}
\label{eq:4.25} 
\mathrm{Re} \Big\{\frac{\bar{s}} {|s| }~ ({\mathbf A}_{s} \mathbf u,  \overline{\mathbf u} )_{\Omega_-} \Big\}
=\;& \frac{\sigma}{|s|}~ \triple{\mathbf u}^2_{|s|,\Omega_-},\\
\label{eq:4.26}
\mathrm{Re} \Big\{ \frac{\zeta}{\eta} ~ \frac{1}{|s|} (B_{s} \;\theta ,\overline{\theta})_{\Omega_-} \Big\}
\geq\;& \frac{\zeta}{\eta} ~\frac{\sigma}{|s|^2}~ \triple{\theta}^2_{|s|, \Omega_-}, \\
\label{eq:4.27}
\mathrm{Re} \Big\{ \frac{\bar{s}}{|s| }~ \rho_f ~(C_{s, \Omega_+} \;v, \overline{v})_{\Omega_+} \Big\} 
=\;&\rho_f  \frac{\sigma}{|s|} \triple{v}^2_{|s|,\Omega_+} . 
\end{align}
Therefore, combining \eqref{eq:4.25} - \eqref{eq:4.27}, substituting into \eqref{eq:4.24} and recalling that $d_2=0$, it follows that
\begin{eqnarray*}
\frac{\sigma}{|s|} \Big( \triple{\mathbf{u }}^2_{|s|, \Omega_-} +  \frac{\zeta}{\eta} \frac{1}{|s|}~
\triple{\theta}^2_{|s|, \Omega_-} + \rho_f \triple{v}^2_{|s|, \Omega_+}\Big)
&\leq& \Big|(d_1, \overline{\mathbf{u}})_{\Omega_-} + 
\langle  d_3, \overline{\gamma^+ {v}}\rangle_{\Gamma}\Big|.
 \end{eqnarray*}
However, from the definition $\underline\sigma:= \min\{1,\sigma\}$ we see that
the left hand side of this expression satisfies
\[
\frac{\underline\sigma}{|s|}\Big(\triple{\mathbf u}^2_{|s|, \Omega_-}  + \frac{\zeta}{\eta} ~
\triple{ \sqrt{\frac{\underline\sigma}{|s|}}\theta}^2_{|s|, \Omega_-}+ \rho_f  \triple{v}^2_{|s|, \Omega_+} \Big) \leq\; LHS.
\] 
Consequently, we have the estimate 
\begin{equation}\label{eq:4.28}
\Big(\triple{\mathbf{u}}^2_{|s|, \Omega_-} + \triple{\theta}^2_{|s|, \Omega_-} + \triple{v}^2_{|s|, \Omega_+} \Big)^{1/2}
\leq c_0 \frac{|s|^{3/2} }{\sigma {\underline{\sigma}}^{3/2}} \;\|(d_1,0,d_3, 0)\|_{X^\prime},
\end{equation}
where $c_0$ is  a constant depending only on the physical parameters $ \rho_f, \zeta, \eta $. 
In deriving the estimate \eqref{eq:4.28}, we   have tacitly applied the relations \eqref{eq:4.4}, \eqref{eq:4.5} and \eqref{eq:4.6}.
\end{proof}
As we will see  the  estimate (\ref{eq:4.28}) will lead us to show  the invertibility  of the operator  $\pmb{\mathscr{A}} $ in (\ref{eq:3.18}) (or (\ref{eq:4.12}) rather).  In fact,  the following result holds for the operator $\pmb{\mathscr{A}}$ of (\ref{eq:4.12}). 
\begin{theorem} \label{thm:4.2}
Let
\begin{align*}
X :=\;& \mathbf{ H}^1(\Omega_-) \times H^1(\Omega_-) \times H^{1/2}(\Gamma) \times H^{-1/2}(\Gamma), \\
X^\prime :=\; &(\mathbf{ H}^1(\Omega_-))^\prime \times (H^1(\Omega_-))^\prime \times H^{-1/2}(\Gamma) \times H^{1/2}(\Gamma), \\ 
X^\prime_0 :=\;& \{ (d_1, d_2 , d_3, d_4) \in X^\prime\,  \big | \,  d_2 = 0~ and~ d_4 = 0\}.
\end{align*}
Then 
$ \pmb{\mathscr{A}} : X \to  X^\prime_0 $  is invertible .  Moreover, we have the estimate:
\begin{equation} \label{eq:4.29}
\| \pmb{\mathscr{A}}^{-1}|_{X_0^{\prime}} \|_{X^\prime,   X}  \leq c_0 \frac{|s|^2}{\sigma\underline\sigma^3} ,
\end{equation}
where $c_0$ is a constant independent of $s$ and $\sigma:= \mathrm{Re}~s > 0$.  
\end{theorem}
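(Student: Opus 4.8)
The plan is to reduce everything to facts already established: the unique solvability of the variational problem \eqref{eq:4.21} granted by Theorem~\ref{th:4.1}, the energy estimate \eqref{eq:4.28}, and the equivalence (recorded just before Theorem~\ref{th:4.1}) between \eqref{eq:4.21} and the nonlocal system \eqref{eq:3.18} for data in $X'_0$. First I would settle the qualitative claim. Given $(d_1,0,d_3,0)\in X'_0$, Theorem~\ref{th:4.1} produces a unique $(\mathbf u,\theta,v)\in\pmb{\mathbb{H}}$; setting $\phi:=\gamma^+v$ and $\lambda:=\partial_n^+v$ as in \eqref{eq:4.19} yields a member of $X$ solving \eqref{eq:3.18}. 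Conversely, any solution of \eqref{eq:3.18} with such data produces, through the representation \eqref{eq:4.13}, a solution of \eqref{eq:4.21}, so uniqueness transfers back. Hence for every datum in $X'_0$ the equation $\pmb{\mathscr A}(\mathbf u,\theta,\phi,\lambda)=(d_1,0,d_3,0)$ has exactly one solution in $X$, which is what invertibility of $\pmb{\mathscr A}^{-1}|_{X'_0}$ means.

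The remaining, quantitative, task is to convert the energy bound \eqref{eq:4.28} on $(\mathbf u,\theta,v)$ into a bound on $\|(\mathbf u,\theta,\phi,\lambda)\|_X$, slot by slot. For the first two components this is bookkeeping: the norm equivalences \eqref{eq:4.4} and \eqref{eq:4.5} (together with Korn's inequality for $\mathbf u$) control $\|\mathbf u\|_{\mathbf H^1(\Omega_-)}$ and $\|\theta\|_{H^1(\Omega_-)}$ by $\triple{\mathbf u}_{|s|,\Omega_-}$ and $\triple{\theta}_{|s|,\Omega_-}$ up to factors $\underline\sigma^{-1}$ and $\underline\sigma^{-1/2}$. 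For the Dirichlet datum $\phi=\gamma^+v$ I would use the trace theorem and \eqref{eq:4.6} to get $\|\phi\|_{H^{1/2}(\Gamma)}\lesssim\triple{v}_{1,\Omega_+}\lesssim\underline\sigma^{-1}\triple{v}_{|s|,\Omega_+}$.

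The crux is the Neumann datum $\lambda=\partial_n^+v\in H^{-1/2}(\Gamma)$, which can be estimated only through the equation \eqref{eq:4.14}. By the Green identity \eqref{eq:4.20}, for any $\xi\in H^{1/2}(\Gamma)$ one has $\langle\partial_n^+v,\xi\rangle=-c_{\Omega_+}(v,w;s)$ for any $H^1(\Omega_+)$ extension $w$ of $\xi$. Choosing $w$ to be an $s$-adapted lifting satisfying $\triple{w}_{|s|,\Omega_+}\lesssim(|s|/\underline\sigma)^{1/2}\|\xi\|_{H^{1/2}(\Gamma)}$ (so that, in addition, $\|w\|_{\Omega_+}\lesssim(|s|\,\underline\sigma)^{-1/2}\|\xi\|_{H^{1/2}(\Gamma)}$), and using $\|v\|_{\Omega_+}\le (c/|s|)\triple{v}_{|s|,\Omega_+}$ to tame the reaction term $(s/c)^2(v,w)_{\Omega_+}$, both contributions to $c_{\Omega_+}(v,w;s)$ are $\lesssim(|s|/\underline\sigma)^{1/2}\triple{v}_{|s|,\Omega_+}\|\xi\|_{H^{1/2}(\Gamma)}$, whence $\|\lambda\|_{H^{-1/2}(\Gamma)}\lesssim(|s|/\underline\sigma)^{1/2}\triple{v}_{|s|,\Omega_+}$. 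I expect this to be the one genuine obstacle: a naive $s$-independent lifting only yields a factor $|s|$ instead of $|s|^{1/2}$, which would overshoot the exponent claimed in \eqref{eq:4.29}, so the boundary-layer (resolvent-adapted) lifting is essential.

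Finally I would assemble the four component bounds, insert \eqref{eq:4.28}, and keep the worst power of $|s|/\underline\sigma$. The $\lambda$-slot dominates, contributing $(|s|/\underline\sigma)^{1/2}\cdot|s|^{3/2}/(\sigma\underline\sigma^{3/2})=|s|^2/(\sigma\underline\sigma^2)$, while the $\mathbf u$, $\theta$, and $\phi$ slots (which scale like $|s|^{3/2}/(\sigma\underline\sigma^{5/2})$ at worst) are no larger once every surviving $\underline\sigma$ power is absorbed into $\underline\sigma^{-3}$ using $\underline\sigma\le|s|$. This yields exactly $\|\pmb{\mathscr A}^{-1}|_{X'_0}\|_{X',X}\lesssim |s|^2/(\sigma\underline\sigma^3)$, i.e.\ \eqref{eq:4.29}, with $c_0$ depending only on $\rho_f,\zeta,\eta$ and the geometry. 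All steps other than the Neumann-trace estimate are routine applications of the equivalences and the norm relations \eqref{eq:4.4}--\eqref{eq:4.6}.
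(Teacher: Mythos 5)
Your proposal is correct and follows essentially the same route as the paper: trace theorem plus the norm equivalences for $\phi$, the Bamberger--Ha-Duong $s$-adapted (optimal) lifting for $\lambda=\partial_n^+v$, and then insertion of the energy estimate \eqref{eq:4.28} with the bookkeeping of powers of $|s|$ and $\underline\sigma$. The only difference is that you spell out the proof of the Neumann-trace bound $\|\lambda\|_{H^{-1/2}(\Gamma)}\lesssim(|s|/\underline\sigma)^{1/2}\triple{v}_{|s|,\Omega_+}$, which the paper simply cites from \cite[Proposition 2.5.2]{Sa:2015}; your derivation of it, and your final power count, are both accurate.
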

\begin{proof}
It was pointed out in  (\ref{eq:4.19}) that 
\[
\gamma^+v=\jump{\gamma v} = \phi \in H^{1/2}(\Gamma), \quad \partial_n^+ v= \jump{\partial_n v} =\lambda \in H^{-1/2}(\Gamma). 
\]
Then we have the estimates (see, e.g. \cite{HSW:2013}):
\begin{equation} \label{eq:4.30}
\|\phi \|^2_{H^{1/2}(\Gamma)} = \|\gamma^+ v\|^2_{H^{1/2}(\Gamma)} \leq c_1 \triple{v}^2_{1, \Omega_+}
\leq c_1 \frac{1}{{\underline{\sigma}}^2} \triple{v}^2_{|s|,  \Omega_+}
\end{equation}
Similarly, an application of Bamberger and Ha-Duong's optimal lifting \cite{BaHa:1986a, BaHa:1986b} leads to the estimate
\[
\|\lambda\|_{H^{-1/2}(\Gamma)} =\|\partial^+_n v \|_{H^{-1/2}(\Gamma)} \leq  c_2 (|s|/\underline\sigma)^{1/2}\triple{v}_{|s|, \Omega_+}.
\]
A detailed proof of this inequality can be found in \cite[Proposition 2.5.2]{Sa:2015}. Thus, we can conclude that
\begin{equation} \label{eq:4.31}
\|\lambda\|^2_{H^{-1/2}(\Gamma)} \leq  c_2^2 \frac{|s|} {\underline\sigma} 
 \triple{v}^2_{|s|, \Omega_+}.
\end{equation}
From (\ref{eq:4.30}) and (\ref{eq:4.31}), we obtain the estimates
\begin{equation}\label{eq:4.32}
\frac{1}{2} \Big(\frac{\underline{\sigma}^2}{c_1}  \|\phi \|^2_{H^{1/2}(\Gamma)} + 
 \frac {\underline\sigma}{c_2^2|s|}  \|\lambda\|^2_{H^{-1/2}(\Gamma)}\Big)
 \leq \triple{v}^2_{|s|, \Omega_+}.
 \end{equation}
As a consequence of \eqref{eq:4.22},  it follows that
\[
\underline\sigma^2 \triple{\mathbf{u}}^2_{1, \Omega_-} + { \underline{\sigma}} \triple{\theta}^2_{1, \Omega_-}
+ c \Big(\underline{\sigma}^2  \|\phi \|^2_{H^{1/2}(\Gamma)} + 
 \frac {\underline\sigma}{|s|}  \|\lambda\|^2_{H^{-1/2}(\Gamma)}\Big)\leq 
\Big( c_0 \; \frac{|s|^{3/2} } {\sigma {\underline{\sigma}}^{3/2}}\|(d_1, 0, d_3,  0)\|_{X^{\prime}}\Big)^2.
\]
A simple manipulation shows that the left hand side of the previous expression is such that
\[
c \frac{\underline\sigma^3}{|s|} \Big(\triple{\mathbf{u}}^2_{1, \Omega_-} + \triple{\theta}^2_{1, \Omega_-}
+  \|\phi \|^2_{H^{1/2}(\Gamma)} + \|\lambda\|^2_{H^{-1/2}(\Gamma)}\Big) \leq LHS, 
\]
which  implies that 
\[
\Big( \triple{\mathbf{u}}^2_{1, \Omega_-} + \triple{\theta}^2_{1, \Omega_-} +
 \|\phi \|^2_{H^{1/2}(\Gamma)} +  \|\lambda\|^2_{H^{-1/2}(\Gamma)}\Big)^{1/2}
 \leq c_0 \frac{|s|^2}{\sigma\underline\sigma^3} \|(d_1,0, d_3, 0)\|_{X^\prime}, 
\]
with constant $c_0$ independent of $s$ and $\sigma$.
\end{proof}
We remark that in view of (\ref{eq:3.7}), we see that $\mathbf{u}, \theta$ and $ v$  are solutions of the system 
 \begin{equation}\label{eq:4.34}
\begin{pmatrix}
\mathbf{u} \\
\theta\\
v\\
\end{pmatrix}
= \left (
\begin{matrix}  
      1& 0 & 0 & 0\\
      0 & 1& 0 & 0 \\
      0 & 0  &\mathcal{D}(s) & - \mathcal{S}(s)\\
    \end{matrix}
    \right )  \pmb{\mathscr{A}}^{-1} 
    \begin{pmatrix}
d_1 \\
0\\
d_3\\
0\\
\end{pmatrix}.
 \end{equation}
With the properties of solutions available  in the transformed domains,  we are now in a position to estimate  the corresponding properties of solutions in the time domain based on Lubich's Convolution Quadrature \cite{Lu:1994}, introduced in the early 90's for treating time-dependent boundary integral equations of convolution type. 

An essential feature of this approach is that estimates of properties of  solutions in the time domain are obtained  without the need for applying the inverse Laplace transform. Instead, the crucial result described on Proposition \ref{prop:4.3} below is employed to retrieve time domain estimates from those obtained in the Laplace domain.

Before presenting the aforementioned result we must introduce some notation.  For Banach spaces $X$ and $Y$,  let  $\mathcal{B}(X, Y)$ denote the set of bounded linear operators from $X$ to $Y$. We say that an analytic function $A :  \mathbb{C}_+ \rightarrow   \mathcal{B}(X, Y)$ is an element of the class of symbols $\mathbf{Sym} (\mu, \mathcal{B}(X, Y))$ if there exist $\mu \in \mathbb{R}$ and $m\geq 0$ such that  
$$ \|A(s)\|_{X,Y}  \le C_A(\mathrm{Re}~s) |s|^{\mu} \quad \mbox{for}\quad s \in \mathbb{C}_+ , $$
where $C_A : (0, \infty) \rightarrow (0, \infty) $ is a non-increasing function such that 
$$ C_A(\sigma) \le \frac{ c}{\sigma^m} , \quad \forall \, \sigma \in ( 0, 1].$$
In order to make the statement of the time-domain estimates more compact, we will make use of the regularity spaces
\begin{equation}
\label{eq:wk+}
W^k_+( \mathcal{H}):= \Big\{ w \in \mathcal{C}^{k-1}(\mathbb{R}; \mathcal{H}) : w ~\equiv 0~ in ~(-\infty,0), w^{(k)}  \in L^1 (\mathbb{R}; \mathcal{H})\Big \}  
\end{equation}
where $\mathcal{H}$ denotes a Banach space. 
\begin{proposition}[\cite{Sa:2015, LaSa:2009b}]
Let $A = \mathcal{L}\{a\} \in \mathbf{Sym}(\mu, \mathcal{B}(X, Y))$ with $\mu \ge 0$ and let 
$$ k:=\lfloor \mu +2 \rfloor \quad \varepsilon := k - (\mu +1) \in (0, 1].
$$ 
If $ g \in W_+^k(\mathbb R,X)$, then $a* g \in \mathcal{C}(\mathbb{R}, Y)$ is causal and 
$$ \| (a*g)(t) \|_Y  \le 2^{\mu+1} C_{\varepsilon} (t) C_A (t^{-1}) \int_0^1 \|(\mathcal{P}_kg)(\tau) \|_X \; d\tau,
$$
where 
$$ C_{\varepsilon} (t) := \frac{t^\varepsilon }{\pi \varepsilon },
\qquad\mbox{and}\qquad
(\mathcal{P}_kg) (t) = \sum_{\ell=0}^k \left ( \begin{array}{c} k \\ \ell \\
\end{array} \right ) g^{(\ell)}(t). $$  \label{prop:4.3}
\end{proposition}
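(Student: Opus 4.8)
The plan is to combine causality with the inverse Laplace transform taken along a vertical contour, and to optimise the position of that contour only at the very end. First I would record that, since $A\in\mathbf{Sym}(\mu,\mathcal{B}(X,Y))$ is analytic and polynomially bounded on $\mathbb{C}_+$, its inverse transform $a=\mathcal{L}^{-1}\{A\}$ is a causal $\mathcal{B}(X,Y)$-valued distribution (a vector-valued Paley--Wiener argument), so that for causal $g$ the convolution $a*g$ is itself causal; this yields the support claim $a*g\equiv 0$ on $(-\infty,0)$ for free. The continuity and the quantitative bound then follow from the representation
\[
(a*g)(t)=\frac{1}{2\pi i}\int_{\sigma-i\infty}^{\sigma+i\infty} e^{st}\,A(s)\,\mathcal{L}\{g\}(s)\,ds,\qquad \sigma=\mathrm{Re}~s>0,
\]
which holds for every $\sigma>0$ once the integrand is shown to be absolutely integrable along the line.

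The key device is the regularisation that trades the growth $|s|^{\mu}$ of the symbol against the smoothness of $g$. Using that $g\in W^k_+(\mathcal{H})$ is causal with vanishing initial data, integration by parts gives $\mathcal{L}\{g^{(\ell)}\}(s)=s^{\ell}\mathcal{L}\{g\}(s)$ for $0\le\ell\le k$, whence
\[
\mathcal{L}\{\mathcal{P}_k g\}(s)=\Big(\sum_{\ell=0}^{k}\binom{k}{\ell}s^{\ell}\Big)\mathcal{L}\{g\}(s)=(1+s)^{k}\,\mathcal{L}\{g\}(s),
\]
so that $\mathcal{L}\{g\}(s)=(1+s)^{-k}\mathcal{L}\{\mathcal{P}_k g\}(s)$; here $\mathcal{P}_k=(I+\partial_t)^{k}$ is exactly the operator appearing in the statement. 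Substituting this into the inversion integral, using $\|A(s)\|_{X,Y}\le C_A(\sigma)|s|^{\mu}$ together with the elementary bound $\|\mathcal{L}\{\mathcal{P}_k g\}(s)\|_X\le\int e^{-\sigma\tau}\|(\mathcal{P}_k g)(\tau)\|_X\,d\tau$ (which by causality of $a$ may be localised to the interval on which $g$ influences the output), I would arrive at
\[
\|(a*g)(t)\|_Y\le \frac{e^{\sigma t}}{2\pi}\,C_A(\sigma)\Big(\int \|(\mathcal{P}_k g)(\tau)\|_X\,d\tau\Big)\int_{-\infty}^{\infty}\frac{|\sigma+i\omega|^{\mu}}{|1+\sigma+i\omega|^{k}}\,d\omega.
\]

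It then remains to control the scalar integral $I(\sigma):=\int_{-\infty}^{\infty}|\sigma+i\omega|^{\mu}|1+\sigma+i\omega|^{-k}\,d\omega$ and to pick $\sigma$. Bounding $|\sigma+i\omega|\le|1+\sigma+i\omega|$ reduces it to $\int|1+\sigma+i\omega|^{-(k-\mu)}\,d\omega$ with exponent $k-\mu=1+\varepsilon>1$; this is precisely where $k=\lfloor\mu+2\rfloor$ enters, since it is what renders the contour integrand absolutely integrable (justifying the inversion formula \emph{a posteriori}) and, after the substitution $\omega=(1+\sigma)u$, produces a factor $\varepsilon^{-1}$ together with the decay $(1+\sigma)^{-\varepsilon}$. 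Finally I would set $\sigma=t^{-1}$, so that $e^{\sigma t}=e$ is absorbed into a numerical constant, $C_A(\sigma)=C_A(t^{-1})$ appears as stated, and $(1+\sigma)^{-\varepsilon}\le t^{\varepsilon}$ gives the factor $C_{\varepsilon}(t)=t^{\varepsilon}/(\pi\varepsilon)$; continuity of $a*g$ follows by dominated convergence applied to the same absolutely convergent representation. I expect the main obstacle to be the bookkeeping of the explicit constants, namely extracting the clean factor $2^{\mu+1}/(\pi\varepsilon)$ from the scalar integral $I(\sigma)$ and verifying that the optimisation $\sigma=t^{-1}$ is exactly what balances $e^{\sigma t}$ against $(1+\sigma)^{-\varepsilon}$, rather than any conceptual difficulty; some care is also needed to justify passing from the $\mathcal{B}(X,Y)$-valued Bochner integral to the scalar norm estimates.
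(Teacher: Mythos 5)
Your proposal is correct and follows essentially the same route as the proof the paper relies on (the paper only cites \cite{Sa:2015}, Proposition 3.2.2, and \cite{LaSa:2009b} rather than reproducing the argument): contour inversion along $\mathrm{Re}\,s=\sigma$, the identity $\mathcal{L}\{g\}(s)=(1+s)^{-k}\mathcal{L}\{\mathcal{P}_k g\}(s)$, absolute integrability supplied by $k-(\mu+1)=\varepsilon>0$, and the final choice $\sigma=t^{-1}$. The steps you defer --- localizing the time integral by causality and extracting the explicit constant $2^{\mu+1}/(\pi\varepsilon)$ --- are exactly the bookkeeping carried out in that reference.
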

\textbf{Remark:} The proof of this result can be found in full detail in \cite[Proposition 3.2.2]{Sa:2015}.\\

As an immediate consequence of this result, we see from Theorem \ref{thm:4.2} that  $ \pmb{\mathscr{A}}^{-1}|_{X_0^{\prime}} \in \mathbf{Sym}(2, \mathcal{B}(X^\prime, X))$.  Moreover, $ k= 4$ and $  \varepsilon = 1$ and we thus  have the estimate:
\begin{theorem} \label{thm:4.4} 
Let  ${\bf D}(t) := \mathcal{L}^{-1}\{ (d_1, 0, d_3, 0)^\top\}$  belong to $W_+^4(\mathbb R, X')$. Then
\[
(\mathbf{U}, \Theta, \mathcal{L}^{-1} \{\phi\} , \mathcal{L}^{-1}\{ \lambda \})^\top \in \mathcal{C}([0,T],X)
\]
and there exists $c>0$ depending only on the geometry such that
\begin{equation}  \label{eq: 4.35}
\| (\mathbf{U}, \Theta, \mathcal{L}^{-1} \{\phi\} , \mathcal{L}^{-1}\{ \lambda \})^\top(t) \|_X  \leq c\,t^2 \max \{1, t^{3}\}\!\! \int_0^t \| (\mathcal{P}_4\mathbf D) (\tau)\|_{X^{\prime}}\; d \tau. 
\end{equation}
\end{theorem}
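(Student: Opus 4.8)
The plan is to recognize the assertion as a direct application of the operational calculus of Proposition~\ref{prop:4.3} to the solution operator $\pmb{\mathscr{A}}^{-1}|_{X_0'}$, whose norm Theorem~\ref{thm:4.2} has just controlled. In the Laplace domain the solution is $(\mathbf u,\theta,\phi,\lambda)^\top = \pmb{\mathscr{A}}^{-1}(d_1,0,d_3,0)^\top$, so taking inverse transforms and using that a product of Laplace transforms corresponds to a time convolution, the sought time-domain quantity equals $a * \mathbf D$, where $a := \mathcal L^{-1}\{\pmb{\mathscr{A}}^{-1}|_{X_0'}\}$ and $\mathbf D := \mathcal L^{-1}\{(d_1,0,d_3,0)^\top\}$. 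Everything then reduces to checking the hypotheses of Proposition~\ref{prop:4.3} and reading off the constants.

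First I would verify that $\pmb{\mathscr{A}}^{-1}|_{X_0'} \in \mathbf{Sym}(2,\mathcal B(X',X))$. The bound \eqref{eq:4.29} reads $\|\pmb{\mathscr{A}}^{-1}|_{X_0'}\|_{X',X} \le c_0 |s|^2/(\sigma\underline\sigma^3)$, which is exactly of the form $C_A(\sigma)|s|^\mu$ with $\mu = 2$ and $C_A(\sigma) = c_0/(\sigma\underline\sigma^3)$. Recalling $\underline\sigma = \min\{1,\sigma\}$, one has $C_A(\sigma) = c_0/\sigma^4$ for $\sigma \in (0,1]$ and $C_A(\sigma) = c_0/\sigma$ for $\sigma > 1$; this is continuous, non-increasing on $(0,\infty)$, and bounded by $c_0/\sigma^4$ on $(0,1]$, so $m = 4$. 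The one point deserving a word is analyticity: the symbol class requires $\pmb{\mathscr{A}}^{-1}$ to depend analytically on $s$, which it does, since $\pmb{\mathscr{A}}(s)$ is analytic on $\mathbb C_+$ (the layer operators $V(s),K(s),K'(s),W(s)$ are analytic and $\mathbf A_s,B_s$ are polynomial in $s$) and pointwise invertible; the non-analytic multipliers $\pmb{\mathscr{Z}}(s)$ and $Z(s)$ served only as devices to extract the coercivity bound, as already noted after \eqref{eq:4.10}, and do not enter the symbol itself.

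With membership established, I would simply instantiate Proposition~\ref{prop:4.3}. For $\mu = 2$ one gets $k = \lfloor \mu + 2\rfloor = 4$ and $\varepsilon = k - (\mu+1) = 1$, whence $\mathcal P_k = \mathcal P_4$ and $C_\varepsilon(t) = t/\pi$. The hypothesis $\mathbf D \in W_+^4(\mathbb R,X')$ (with its $d_2,d_4$ components vanishing it in fact lies in $W_+^4(\mathbb R,X_0')$) is precisely the regularity $g \in W_+^k$ required, so $a*\mathbf D \in \mathcal C(\mathbb R,X)$ and is causal, giving the claimed continuity in time. The estimate then follows by evaluating the prefactor: since $C_A(t^{-1}) = c_0 t \max\{1,t^3\}$ (check the two regimes $t\le 1$ and $t>1$ against $C_A$), one obtains $2^{\mu+1}C_\varepsilon(t)C_A(t^{-1}) = (8c_0/\pi)\, t^2\max\{1,t^3\}$, which is the constant $c\,t^2\max\{1,t^3\}$ of \eqref{eq: 4.35}.

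Since the heavy analytic lifting, namely the coercivity of $\pmb{\mathscr{C}}$ and the resolvent bound \eqref{eq:4.29}, is already done, I expect no real obstacle in this argument; the only genuine subtlety to handle carefully is the analyticity remark above, that is, confirming that the bound obtained through the non-analytic rotation $Z(s)$ is nonetheless a bound on the \emph{analytic} operator $\pmb{\mathscr{A}}^{-1}$, so that Proposition~\ref{prop:4.3} applies. The remaining work is purely the bookkeeping of the indices $\mu,k,\varepsilon$ and the elementary two-regime computation of $C_A(t^{-1})$.
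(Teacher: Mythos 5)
Your proposal is correct and follows exactly the route the paper takes: the resolvent bound \eqref{eq:4.29} of Theorem \ref{thm:4.2} places $\pmb{\mathscr{A}}^{-1}|_{X_0'}$ in $\mathbf{Sym}(2,\mathcal B(X',X))$, and Proposition \ref{prop:4.3} with $\mu=2$, $k=4$, $\varepsilon=1$ yields the stated continuity and the prefactor $t^2\max\{1,t^3\}$. Your extra remarks (the two-regime computation of $C_A(t^{-1})$ and the observation that the non-analytic multipliers $\pmb{\mathscr{Z}}(s)$, $Z(s)$ only serve to derive the bound on the analytic operator $\pmb{\mathscr{A}}^{-1}$) are consistent with, and slightly more explicit than, what the paper records.
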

Similarly, in view of Theorem \ref{th:4.1}, applying Proposition \ref{prop:4.3} with $\mu = 3/2 , k= 3,  \varepsilon = 1/2 $ to the elastic, thermal, and potential fields leads to:
\begin{theorem}\label{th:4.5} 
Let  $\pmb{\mathbb{H}} :=  {\mathbf H}^1(\Omega_-) \times H^1(\Omega_-) \times H^ 1(\Omega_+)$ and
\[
\mathbf{D}(t) := \mathcal{L}^{-1} \{ ( d_1, 0, d_3, 0)^{\top} \}(t)\in  W_+^3(\mathbb R,X').
\]
Then $(\mathbf{U}, \Theta, V)  $ belongs to $C( [0, T], \pmb{\mathbb{H}} ) $  and there holds the estimate
\begin{equation}
\label{eq:3.36}
\|(\mathbf{U}, \Theta, V)  (t)\|_{\pmb{\mathbb{H}} } \leq ~C_0~ t^{3/2} ~\max \{1, t^{5/2} \} \!\!\int_0^t
 \| ( \mathcal{P}_3\mathbf D) (\tau)\|_{X^{\prime} }\;  d \tau  
\end{equation}
for some constant $C_0 > 0$.
\end{theorem}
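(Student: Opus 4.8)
The plan is to recognize $(\mathbf{U},\Theta,V)$ as the causal convolution of the data $\mathbf{D}$ against the distributional inverse Laplace transform of the Laplace-domain solution operator, and then to feed that operator into Proposition \ref{prop:4.3}. Concretely, I would first isolate the solution map $\mathbf{M}(s):(d_1,0,d_3,0)\mapsto(\mathbf u,\theta,v)$ associated with the variational problem \eqref{eq:4.21}. Theorem \ref{th:4.1} guarantees that this map is well defined and single-valued on $X'_0$, and since the forms $a(\cdot,\cdot;s)$, $b(\cdot,\cdot;s)$, $c_{\Omega_+}(\cdot,\cdot;s)$ and the boundary integral operators depend analytically on $s\in\mathbb C_+$, the map $\mathbf M:\mathbb C_+\to\mathcal B(X',\pmb{\mathbb H})$ is an analytic operator-valued function. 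By the convolution theorem one then has $(\mathbf U,\Theta,V)=\big(\mathcal L^{-1}\{\mathbf M\}\big)*\mathbf D$, so the whole question reduces to controlling the symbol $\mathbf M$.

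Second, I would convert the energy-norm bound \eqref{eq:4.22} into an operator-norm bound for $\mathbf M(s)$ in the natural $\pmb{\mathbb H}$ norm. The estimate \eqref{eq:4.22} is phrased in the $s$-dependent norms $\triple{\cdot}_{|s|}$, whereas $\pmb{\mathbb H}$ carries the $H^1$ norms equivalent to $\triple{\cdot}_{1}$; passing between the two is exactly what the equivalences \eqref{eq:4.4}--\eqref{eq:4.6} supply, each in the form $\triple{\cdot}_1\le(\hbox{factor})\,\triple{\cdot}_{|s|}$. The most singular factor is $\underline\sigma^{-1}$, arising from the $\mathbf u$ and $v$ components (the $\theta$ component only loses $\underline\sigma^{-1/2}$), so combining with \eqref{eq:4.22} yields $\|\mathbf M(s)\|_{X',\pmb{\mathbb H}}\le c\,|s|^{3/2}/(\sigma\,\underline\sigma^{5/2})$.

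Third, I would read off from this that $\mathbf M\in\mathbf{Sym}(3/2,\mathcal B(X',\pmb{\mathbb H}))$: the growth exponent is $\mu=3/2$, the prefactor $C_A(\sigma)=c\,\sigma^{-1}\underline\sigma^{-5/2}$ is non-increasing, and it obeys $C_A(\sigma)\le c\,\sigma^{-7/2}$ on $(0,1]$, i.e. $m=7/2$. With $\mu=3/2$ the parameters of Proposition \ref{prop:4.3} become $k=\lfloor\mu+2\rfloor=3$ and $\varepsilon=k-(\mu+1)=1/2$, which is precisely the regularity $\mathbf D\in W_+^3(\mathbb R,X')$ that is assumed, so the hypotheses are met.

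Finally, applying Proposition \ref{prop:4.3} to $\mathbf M$ and $\mathbf D$ delivers at once that $(\mathbf U,\Theta,V)$ is causal, lies in $\mathcal C(\mathbb R,\pmb{\mathbb H})$ (whence in $\mathcal C([0,T],\pmb{\mathbb H})$ by restriction), and satisfies the stated pointwise bound. It then remains only to evaluate the explicit constants: $C_\varepsilon(t)=2t^{1/2}/\pi$, and $C_A(t^{-1})=c\,t\max\{1,t^{5/2}\}$, where the last identity follows by distinguishing $t<1$ (so $t^{-1}>1$, $\underline{(t^{-1})}=1$, giving $C_A(t^{-1})=c\,t$) from $t\ge1$ (so $t^{-1}\in(0,1]$, giving $C_A(t^{-1})=c\,t^{7/2}$). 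Multiplying these with the factor $2^{\mu+1}$ collapses to $C_0\,t^{3/2}\max\{1,t^{5/2}\}$, reproducing \eqref{eq:3.36}. The only genuinely delicate step is the second one: faithfully propagating the powers of $|s|$ and $\underline\sigma$ through the norm equivalences so that $\mu$ and the singularity order $m$ of $C_A$ emerge correctly; once $\mathbf M$ is certified as a symbol of the right order, everything downstream is a mechanical substitution into Proposition \ref{prop:4.3}.
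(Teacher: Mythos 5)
Your proposal is correct and follows essentially the same route as the paper: the paper's own proof is the one-line invocation of Theorem \ref{th:4.1} together with Proposition \ref{prop:4.3} for $\mu=3/2$, $k=3$, $\varepsilon=1/2$, and your argument is precisely the computation that justifies those parameters, including the crucial extra factor $\underline{\sigma}^{-1}$ from passing via \eqref{eq:4.4}--\eqref{eq:4.6} from the $|s|$-weighted norms of \eqref{eq:4.22} to the $\mathbf H^1$ norms, which is what produces the bound $|s|^{3/2}/(\sigma\underline{\sigma}^{5/2})$ and hence the exponent in $\max\{1,t^{5/2}\}$. The final bookkeeping with $C_\varepsilon(t)$ and $C_A(t^{-1})$ matches \eqref{eq:3.36} exactly.
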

%
\section{Semi-discrete error estimates}\label{sec:5}
%
In this section, we discuss the results concerning the discretization of (\ref{eq:3.18}).  We begin with the Galerkin semi discretization in space of the system of equations. Let 
\[
{\mathbf V}_h \subset \mathbf H^1(\Omega_-), \quad W_h \subset H^1(\Omega_-), \quad Y_h \subset H^{1/2}(\Gamma),  \quad X_h \subset H^{-1/2} (\Gamma)
\]
be families of finite dimensional subspaces. We say $ (\mathbf u^h, \theta^h, \phi^h, \lambda^h) \in   
  {\mathbf V}_h \times W_h  \times Y_h \times X_h $ is a Galerkin solution of \eqref{eq:3.18} if it satisfies the  Galerkin equations: 
\begin{equation}\label{eq:5.1}
\Big(\pmb{\mathscr{A}} (\mathbf u^h, \theta^h, \phi^h, \lambda^h)^\top,(\mathbf v,\vartheta,\psi,\eta)^\top\Big) = \Big((d_1,d_2,d_3,d_4)^\top,(\mathbf v,\vartheta,\psi,\eta)^\top\Big)
\end{equation}
for all  $(\mathbf v, \vartheta, \psi, \eta) \in  {\mathbf V}_h \times W_h  \times Y_h \times X_h $.  Again, we multiply \eqref{eq:5.1} by the diagonal matrix $\pmb{\mathscr{Z}}(s)$ in \eqref{eq:4.7} and consider the Galerkin equations for  the modified equation \eqref{eq:4.12}:
\begin{equation}\label{eq:5.2}
\Big(\pmb{\mathscr{B}} (\mathbf u^h, \theta^h, \phi^h, \lambda^h)^\top,(\mathbf v,\vartheta,\psi,\eta)^\top\Big) = \Big((d_1,\frac{\zeta}{\eta}~|s|^{-1}~d_2,\rho_fd_3,\rho_fd_4)^\top,(\mathbf v,\vartheta,\psi,\eta)^\top\Big)
\end{equation}
for all  $(\mathbf v, \vartheta, \psi, \eta) \in  {\mathbf V}_h \times W_h  \times Y_h \times X_h $.   Solutions 
 of Galerkin equations of (\ref{eq:5.2}) can be established in the same manner as  the  exact solutions of the system \eqref{eq:4.21}.  We will not repeat the process and consider only the error estimates here. 
 
We note that if $({\mathbf u}^h, \theta^h, \phi^h, \lambda^h) \in {\mathbf V}_h \times W_h  \times Y_h \times X_h $ is a Galerkin solution of (\ref{eq:5.2}), then 
\begin{equation} \label{eq:5.3}
 v^h := \mathcal{D}(s) \phi^h - \mathcal{S}(s) \lambda^h \; \in H^1(\mathbb R^3\setminus\Gamma) 
\end{equation}
satisfies
\begin{alignat}{6}
\label{eq:5.4}
-\Delta v^h + (s/c)^2v^h = \; & 0 &\quad& \mbox{in } \quad \mathbb{R}^3 \setminus \Gamma,\\
\nonumber
\jump{\gamma v^h} = \;& \phi^h &\quad& \phi^h \in Y_h \subset H^{1/2}(\Gamma), \\
\nonumber
\jump{\partial v^h} = \;& \lambda^h &\quad& \lambda^h \in X_h \subset H^{-1/2}(\Gamma).
\end{alignat}
Now set 
\[
V_h := \{ w \in H^1(\mathbb{R}^3 \setminus \Gamma ) : \jump{\gamma w} \in Y_h,
 \gamma^-w  \in X^\circ_h \}.
\]
Here and in the sequel, the upper script $^\circ$ will be used to denote the annihilator (or polar set) of a given Banach space. In this particular case, 
\[
X_h^\circ := \{ w \in X^\prime_h : \langle v, w \rangle  = 0 \quad \forall v \in X_h \}.
\]
Applying Green's formula to (\ref{eq:5.4}), we obtain for  $w \in V_h$,
\begin{equation}\label{eq:5.5}
\int_{\Gamma} \partial_n^+ v^h~ \jump{\gamma w}\; d_{\Gamma} = 
- c_{\mathbb R^3\setminus\Gamma}\;(v^h, w\; ; s) - \int_{\Gamma}\jump{\partial_n v^h}\; \gamma^- w~ d_{\Gamma}. 
\end{equation}
As a consequence, we see that $ ( \mathbf u^h, \theta^h, v^h)  \in  {\mathbf V}_h \times W_h  \times V_h  $ satisfies the variational equations
\begin{alignat}{6} 
 a(\mathbf{u}^h, \mathbf v;  s )
  -  \zeta ( \theta^h, div \; \mathbf v )_{\Omega_-}\!  + s\rho_f\langle \jump{\gamma v^h}\mathbf n , \gamma^- \mathbf v \rangle_{\Gamma}  = \; &  (d_1, \mathbf v)_{\Omega_-} &\quad& \forall ~\mathbf v \in {\mathbf V}^h, \nonumber \\
\frac{s}{|s|}  \zeta ( div~ \mathbf u^h,  \vartheta)_{\Omega_-}\! +  \frac{\zeta}{\eta} \frac{1}{|s|} b(\theta^h, \vartheta; s)  =\;&  \frac{\zeta}{\eta} \frac{1}{|s|}(d_2 , \vartheta)_{\Omega_-}&\quad& \forall ~ \vartheta \in W_h, \label{eq:5.6}\\
- s\rho_f  \langle  \gamma^- \mathbf{u}^h  , \jump{\gamma w}\mathbf{n}\rangle_{\Gamma} + \rho_f  c_{\mathbb R^3\setminus \Gamma} ( v^h, w; s ) =\;&
\rho_f  \langle  d_3, \jump{\gamma w}\rangle_{\Gamma} &\quad& \forall w ~ \in V_h.
\nonumber
\end{alignat}
For the error estimate,  we need to compare $(\mathbf u^h, \theta^h, v^h)$ with  the exact solution $(\mathbf u, \theta, v)$ of the transmission problem.  The exact solution $(\mathbf u, \theta, u) \in \mathbf H^1(\Omega_-) \times H^1(\Omega_-) \times H^1(\mathbb R^3 \setminus \Gamma)$
satisfies the variational equation \eqref{eq:4.21}, which can be put in the same form as \eqref{eq:5.6} by choosing test functions in the discrete subspaces. This implies that 
\begin{alignat}{6} 
 a(\mathbf{u}^h\!- \mathbf u , \mathbf v;  s )
  -  \zeta( \theta^h \!- \theta, div \; \mathbf v )_{\Omega_-} \! + s\; \rho_f \langle\jump{\gamma (v^h \!-v)} \mathbf n , \gamma^- \mathbf v \rangle_{\Gamma}  &\; =  0 &\quad& \forall \mathbf v \in {\mathbf V}^h, \nonumber \\
\frac{s}{|s|}  \zeta( div (\mathbf u^h \!- \mathbf u),  \vartheta )_{\Omega_-}\! +  \frac{\zeta}{\eta} \frac{1}{|s|} b(\theta^h\!- \theta, \vartheta; s) &\; = 0 &\quad& \forall \vartheta \in W_h, \label{eq:5.7}\\
\nonumber
-  s\rho_f \langle  \gamma^- (\mathbf{u}^h\!- \mathbf u) , \jump{\gamma w}\mathbf{n}\rangle_{\Gamma} + \rho_f  c_{\mathbb R^3\setminus \Gamma} ( v^h\! - v, w; s ) \quad &  & & \\
\nonumber
+ \rho_f \langle \jump{\partial_n(v^h\! - v)}, \gamma^- w \rangle_{\Gamma} &\; = 0  &\quad& \forall w \in V_h. 
\nonumber
\end{alignat}
The significance  of \eqref{eq:5.7} is that it indicates that the Galerkin solutions are the best possible approximations of the exact solution in the finite dimensional subspaces with respect to the inner products defined by the underlying bilinear forms. However, it is worth emphasizing that the errors $ (\mathbf u^h  -\mathbf u )$ and $(\theta^h - \theta )$ do not belong to the discrete function spaces. In  order to justify \eqref{eq:5.7} as a proper variational formulation, we will make use of the spaces of constants and of  infinitesimal rigid motions
\begin{align}
\mathfrak{R}_{\mathbf u}  := \;&\{ \mathbf m \in \mathbf H^1(\Omega_-): ( \mathbf C \boldsymbol\varepsilon
(\mathbf m), \bm{\varepsilon}(\mathbf m))_{\Omega_-} = 0 \}, \\
{\mathfrak R}_{\theta}  := \;& \{ m \in H^1(\Omega_-): ( \nabla m, \nabla m)_{\Omega_-} = 0 \},
\end{align}
which in what follows will always be assumed to be contained on the discrete subspaces $\mathbf V_h$ and $W_h$ respectively. We now define the elliptic projection on displacement fields
\begin{alignat}{6} \label{eq:5.8}
\mathbf P_h: \mathbf H^1(\Omega_-) \longrightarrow \;&\mathbf V_h  \subset \mathbf H^1(\Omega_-) & \\
\nonumber
(\mathbf C\boldsymbol\varepsilon(\mathbf P_h \mathbf u),  \bm{\varepsilon} (\mathbf v^h))_{\Omega_-} = \;&  (\mathbf C\boldsymbol\varepsilon(\mathbf u), \bm{\varepsilon} (\mathbf v^h ) )_{\Omega_-} \qquad & \forall \mathbf v^h\in \mathbf V_h,\\
\nonumber
(\mathbf P_h \mathbf u, \mathbf m)_{\Omega_-} =\;& (\mathbf u, \mathbf m)_{\Omega_-}
\qquad & \forall\mathbf m\in \mathfrak R_{\mathbf u}.
\end{alignat}
This projection is well defined thanks to Korn's second inequality and can be alternatively introduced as the orthogonal projection onto $\mathbf V_h$ with respect to the non-standard (but equivalent) inner product in $\mathbf H^1(\Omega)$
\[
(\mathbf C\boldsymbol\varepsilon(\mathbf u),\boldsymbol\varepsilon(\mathbf v))_{\Omega_-}+(\mathbf P_{\mathfrak R_{\mathbf u}}\mathbf u,\mathbf P_{\mathfrak R_{\mathbf u}}\mathbf v)_{\Omega_-},
\]
where $\mathbf P_{\mathfrak R_{\mathbf u}}$ is the $\mathbf L^2(\Omega_-)$ orthogonal projection onto $\mathfrak R_{\mathbf u}$. This implies that the approximation error $\|\mathbf u-\mathbf P_h\mathbf u\|_{1,\Omega_-}$ is equivalent to the best approximation error in $\mathbf H^1(\Omega_-)$ by elements of $\mathbf V_h$. Similarly, we can introduce a projection on the discrete scalar fields
\begin{alignat}{6}  \label{eq:5.10}
Q_h: H^1(\Omega_-) \longrightarrow \;& W_h  \subset  H^1(\Omega_-) & \\
\nonumber
(\nabla (Q_h \theta), \nabla \vartheta^h)_{\Omega_-} = \;&  (\nabla \theta, \nabla\vartheta^h )_{\Omega_-}
\qquad  & \forall\vartheta^h\in W_h,\\
\nonumber
(Q_h \theta,  m)_{\Omega_-} = \;&(\theta, m)_{\Omega_-}\qquad &\forall m\in \mathfrak R_\theta.
\end{alignat}
Note that the reason to introduce these projections is to avoid having additional mass terms arising from the full Sobolev norm in the associated error equations.

In terms of  the elliptic projection $\mathbf P_h$, we can define
\[
\mathbf e_{\mathbf u}^h := \mathbf u^h - \mathbf P_h\mathbf u, \qquad \mathbf r_{\mathbf u}^h := \mathbf P_h \mathbf u - \mathbf u,
\]
so that the error function $\mathbf u^h  - \mathbf u$  can be decomposed as
\[
\mathbf u^h - \mathbf u = (\mathbf u^h - \mathbf P_h  \mathbf u ) + (\mathbf P_h \mathbf u - \mathbf u) = \mathbf e_{\mathbf u}^h + \mathbf r_{\mathbf u}^h.
\]
As a consequence
\[
a (\mathbf u^h - \mathbf u,  \mathbf v^h ; s) = a (\mathbf e_{\mathbf u}^h \;,  \mathbf v^h; s)
+ s^2 \rho_\Sigma (\mathbf r_{\mathbf u}^h, \mathbf v^h)_{\Omega_-}.    
\]
We may decompose the error $\theta^h - \theta $ in a similar manner by letting
\[
e^h_{\theta}:= \theta^h - Q_h\theta, \qquad  r^h_{\theta} := Q_h\theta - \theta,
\]
so that
\begin{eqnarray*}
\theta^h - \theta & = & (\theta^h - Q_h\theta) + (Q_h\theta - \theta)= e^h_{\theta} + r^h_{\theta}. \\
b (\theta^h -\theta, \vartheta^h; s ) & = & b (e^h_{\theta} , \vartheta^h; s) + (s/\kappa) (r^h_{\theta}, \vartheta^h)_{\Omega_-}.
\end{eqnarray*}
Finally, we define
\[
e_v^h:= \mathcal D(s) (\phi^h-\phi) - \mathcal S(s)(\lambda^h-\lambda) \qquad \hbox{ in }\; \mathbb R^3\setminus\Gamma.
\]
This leads to the variational formulation for the error functions $(\mathbf e_{\mathbf u}^h,  e^h_{\theta}, e^h_v) \in \mathbf V_h \times W_h \times H^1(\mathbb R^3 \setminus \Gamma)$.
\begin{theorem} \label{tn:5.1}
The error functions $ (\mathbf e^h_{\mathbf u},  e^h_{\theta}, e^h_v) \in  \mathbf V_h \times W_h \times H^1(\mathbb R^3 \setminus \Gamma) $ satisfy the variational formulation
\begin{alignat*}{6} 
(\gamma^- e^h_u,\;  \jump{\gamma e^h_u}\; +\phi,\; \jump{\partial_n e^h_v} + \lambda) \in \; & X^\circ_h \times Y_h\times X_h, &  \\
\mathcal{A}((\mathbf e^h_{\mathbf u}, e^h_{\theta}, e^h_v), ( \mathbf v,  \vartheta, w) ;s ) =\; & \ell((\mathbf v, \vartheta, w) ; s) & \quad\forall (\mathbf v, \vartheta, w) \in 
\mathbf V_h \times W_h \times V_h.  
\end{alignat*}
Where the bilinear form $\mathcal A$ is defined by
\begin{alignat}{6}
\nonumber
\mathcal{A}((\mathbf e^h_{\mathbf u}, e^h_{\theta}, e^h_v), ( \mathbf v,  \vartheta, w) ;s ) :=\;&  a (\mathbf e^h_{\mathbf u}, \mathbf v;  s ) + \frac{s}{|s|}  \zeta( div ~ \mathbf e^h_\mathbf u,  \vartheta)_{\Omega_-} -  s\rho_f\langle  \gamma^- \mathbf e^h_{\mathbf u} \; , \jump{\gamma w}\mathbf{n}\rangle_{\Gamma} \nonumber \\ 
 &   +  \frac{\zeta}{\eta}  \frac{1}{|s|}  b\; (e^h_{\theta}, \vartheta; s) - \zeta (e^h_{ \theta},  div \; \mathbf v )_{\Omega_-}   \nonumber \\
 & +\rho_f  c_{\mathbb R^3\setminus \Gamma} \; ( e^h_v, w; s )  + s \rho_f\langle \jump{\gamma e^h_v} \; \mathbf n , \gamma^- \mathbf v \rangle_{\Gamma}, \label{eq:5.14}
\end{alignat}
and the functional $\ell$ is given by
\begin{align}
\nonumber 
\ell ((\mathbf v, \vartheta, w ) ; s) :=\; & -  s^2 \rho_\Sigma(\mathbf r^h_{\mathbf u}, \mathbf v )_{\Omega_-} + \zeta(r^h_{\theta}, div~\mathbf v)_{\Omega_-}  -\frac{s}{|s|} \zeta ( div~\mathbf r^h_{\mathbf u}, \vartheta)_{\Omega_-}\\ 
\label{eq:5.15}
& -\frac{\zeta}{\eta} \frac{s}{|s|}\frac{1}{\kappa}( r^h_{\theta}, \vartheta)_{\Omega_-} +s \rho_f \langle \gamma^- \mathbf r^h_{ \mathbf u}, \jump{\gamma w} \mathbf n \rangle_{\Gamma} + \rho_f \langle \lambda, \gamma^-w \rangle_{\Gamma}.  
 \end{align}
\end{theorem}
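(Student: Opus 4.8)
The plan is to obtain the stated identity directly from the Galerkin error equations \eqref{eq:5.7}, which are already in hand, by inserting the elliptic–projection splittings of the errors and absorbing every residual contribution into the right–hand side functional $\ell$. First I would record the three decompositions $\mathbf u^h-\mathbf u=\mathbf e^h_{\mathbf u}+\mathbf r^h_{\mathbf u}$, $\theta^h-\theta=e^h_\theta+r^h_\theta$, together with the observation that $v^h-v=\mathcal D(s)(\phi^h-\phi)-\mathcal S(s)(\lambda^h-\lambda)=e^h_v$, so the potential error is precisely the field generated by the Cauchy–data errors. This last identity lets me rewrite the third equation of \eqref{eq:5.7} entirely in terms of $e^h_v$.

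Next I would exploit the defining orthogonality of the projections $\mathbf P_h$ and $Q_h$. Since $(\mathbf C\boldsymbol\varepsilon(\mathbf r^h_{\mathbf u}),\boldsymbol\varepsilon(\mathbf v))_{\Omega_-}=0$ for $\mathbf v\in\mathbf V_h$ and $(\nabla r^h_\theta,\nabla\vartheta)_{\Omega_-}=0$ for $\vartheta\in W_h$, the full bilinear forms collapse to $a(\mathbf u^h-\mathbf u,\mathbf v;s)=a(\mathbf e^h_{\mathbf u},\mathbf v;s)+s^2\rho_\Sigma(\mathbf r^h_{\mathbf u},\mathbf v)_{\Omega_-}$ and $b(\theta^h-\theta,\vartheta;s)=b(e^h_\theta,\vartheta;s)+(s/\kappa)(r^h_\theta,\vartheta)_{\Omega_-}$, exactly the two splittings recorded just before the statement. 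Substituting these into the first two lines of \eqref{eq:5.7} and transferring the pure mass residuals $s^2\rho_\Sigma(\mathbf r^h_{\mathbf u},\cdot)$, $\zeta(r^h_\theta,\mathrm{div}\,\cdot)$, $\tfrac{s}{|s|}\zeta(\mathrm{div}\,\mathbf r^h_{\mathbf u},\cdot)$ and $\tfrac{\zeta}{\eta}\tfrac{s}{|s|\kappa}(r^h_\theta,\cdot)$ to the right immediately produces the first four terms of $\ell$ and the matching terms of $\mathcal A$.

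The third equation is where the only genuinely non-mechanical step occurs, so I would first settle the membership relations from the jump behaviour of the potentials. Because $v\equiv0$ in $\Omega_-$ (the consequence of $d_4=0$ leading to \eqref{eq:4.19}), one has $\gamma^- e^h_v=\gamma^- v^h\in X^\circ_h$ since $v^h\in V_h$, while $\jump{\gamma e^h_v}+\phi=\jump{\gamma v^h}=\phi^h\in Y_h$ and $\jump{\partial_n e^h_v}+\lambda=\jump{\partial_n v^h}=\lambda^h\in X_h$; these are exactly the three constraints in the statement (read with $e^h_v$ as the potential error). Inserting $\jump{\partial_n e^h_v}=\lambda^h-\lambda$ and $\gamma^-(\mathbf u^h-\mathbf u)=\gamma^-\mathbf e^h_{\mathbf u}+\gamma^-\mathbf r^h_{\mathbf u}$ into the third line of \eqref{eq:5.7}, the boundary term $\rho_f\langle\jump{\partial_n e^h_v},\gamma^- w\rangle_\Gamma$ splits as $\rho_f\langle\lambda^h,\gamma^- w\rangle_\Gamma-\rho_f\langle\lambda,\gamma^- w\rangle_\Gamma$.

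The crux, and the step I expect to be the main obstacle to state cleanly, is the duality cancellation $\langle\lambda^h,\gamma^- w\rangle_\Gamma=0$ for every $w\in V_h$: this is precisely why the test space $V_h$ is defined with the annihilator constraint $\gamma^- w\in X^\circ_h$, and it is what eliminates the unknown discrete density $\lambda^h$ from the error system, leaving only the consistency term $\rho_f\langle\lambda,\gamma^- w\rangle_\Gamma$. Once this cancellation is in force, collecting the surviving error terms on the left reproduces $\mathcal A$ and the residual terms on the right furnish the last two contributions of $\ell$, completing the identification. Everything else is linear bookkeeping; the only care required is to carry the weights $s/|s|$, $\zeta/\eta$ and $\rho_f$ consistently with the scaling introduced earlier by $\pmb{\mathscr{Z}}(s)$.
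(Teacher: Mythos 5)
Your proposal is correct and follows essentially the same route as the paper: insert the elliptic-projection splittings into the Galerkin error equations \eqref{eq:5.7}, use the orthogonality of $\mathbf P_h$ and $Q_h$ to reduce $a$ and $b$ to the forms acting on $\mathbf e^h_{\mathbf u}$, $e^h_\theta$ plus mass residuals, and handle the boundary term by writing $\jump{\partial_n e^h_v}=\lambda^h-\lambda$ and killing $\langle\lambda^h,\gamma^- w\rangle_\Gamma$ via $\lambda^h\in X_h$, $\gamma^- w\in X_h^\circ$ --- which is exactly the paper's manipulation $-\rho_f\langle\jump{\partial_n e^h_v}+\lambda-\lambda,\gamma^- w\rangle_\Gamma=\rho_f\langle\lambda,\gamma^- w\rangle_\Gamma$. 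You also correctly identify the membership relations (reading the first component as $\gamma^- e^h_v$), which the paper derives from the discrete analogues of \eqref{eq:4.17}--\eqref{eq:4.18}.
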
 
\begin{proof}
The bilinear form $\mathcal A$ follows easily from the left hand side of  \eqref{eq:5.6} replacing $(\mathbf u^h - \mathbf u, \theta_h - \theta,  v^h -  v) $ by  $( \mathbf e^h_{\mathbf u} +\mathbf r^h_\mathbf u ,  e^h_{\theta} + r^h_{\theta},  e^h_v) $ and taking special care of the term $e^h_{\theta}$.

From Green's formula \eqref{eq:5.5}, we have 
\[
\langle \partial_n^+ e^h_v~, \jump{\gamma w}\rangle = 
- c_{\mathbb R^3\setminus \Gamma}\;(e^h_v, w\; ; s) - \langle\jump{\partial_n e_v^h},  \gamma^- w~ \rangle. 
\]
But equations \eqref{eq:4.17} and  \eqref{eq:4.18} imply 
\[
-s  \mathbf n\cdot (e^h_{\mathbf u} + \mathbf r^h_{\mathbf u})  -\partial_n^+ e^h_u \in Y_h^\circ, \quad \hbox{and} \quad \gamma^- e^h \in X^\circ_h.
\]
Hence, 
\[
-  s\rho_f \langle  \gamma^- \mathbf e^h_{\mathbf u} , \jump{\gamma w}\mathbf{n} \rangle_{\Gamma} 
+ \rho_f  c_{\mathbb R^3\setminus \Gamma} \; ( e^h_v, w; s )= s \rho_f\langle \gamma^- \mathbf r^h_{ \mathbf u}, \jump{\gamma w} \mathbf n\rangle_{\Gamma}
-\rho_f\langle \jump{\partial e^h_v}, \gamma^- w \rangle_{\Gamma}.
\]
We can rewrite the last term on the right hand side as 
\[
-\rho_f \langle \jump{\partial e^h_v}, \gamma^- w \rangle_{\Gamma}= -\rho_f \langle \jump{\partial e^h_v}+ \lambda - \lambda, \gamma^- w \rangle_{\Gamma} = \rho_f \langle \lambda, \gamma^- w \rangle_{\Gamma}, 
\]
Where we have used that $\lambda^h = \jump{\partial_n e^h_v} + \lambda \in X_h $, and $ \gamma^- w \in X^\circ_h $ but $ \lambda \not \in X_h$. This completes the proof. 
\end{proof}
Following arguments similar as those employed in the proof of Theorem \ref{th:4.1}, we can obtain the  error estimate. In the following, for  simplicity, let
\[
\triple{(\mathbf e^h_{\mathbf u},\; e^h_{\theta}, \; e^h_v)}^2_{|s|} := \triple{ \mathbf e^h_{\mathbf u}}^2_{|s|, \Omega_-} + \triple{ e^h_{\theta} }^2_{|s|, \Omega_-} + \triple{ e^h_v }^2_{|s|, \mathbb{R}^3 \setminus \Gamma} .
\]
\begin{theorem}  \label{th:5.2}
For $(\mathbf u, \theta, \phi, \lambda) \in \mathbf H^1(\Omega_-) \times H^1(\Omega_-) \times H^{1/2}(\Gamma) \times H^{-1/2} (\Gamma) $, there holds the error estimate:
\begin{equation}\label{eq:5.18}
\triple{(\mathbf e^h_{\mathbf u}, e^h_{\theta}, e^h_v)}_{|s|} \leq  C \frac{|s|^2}{\sigma {\underline{\sigma}}^3} \Big(\| \lambda \|_{-1/2,\; \Gamma}^2 + \|s^2 \mathbf r^h_{\mathbf u} \|_{1, \Omega_-} + \|s \mathbf r^h_{\mathbf u} \|_{1, \Omega_-} + \|\mathbf r^h_{\mathbf u}\|_{1, \Omega_-} + \|r^h_{\theta}\|_{1, \Omega_-}\Big)
\end{equation}
where the constant $C$ depends only on the geometry, and physical parameters. 
\end{theorem}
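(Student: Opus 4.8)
The plan is to mirror the energy argument of Theorem~\ref{th:4.1}, now applied to the error system of Theorem~\ref{tn:5.1}. I would test the identity $\mathcal A((\mathbf e^h_{\mathbf u},e^h_\theta,e^h_v),(\mathbf v,\vartheta,w);s)=\ell((\mathbf v,\vartheta,w);s)$ with the error functions themselves, inserting the rotation encoded in $Z(s)$: multiply the displacement block by $\bar s/|s|$ and pair it against $\overline{\mathbf e^h_{\mathbf u}}$, keep the thermal block as it stands and pair against $\overline{e^h_\theta}$, and multiply the fluid block by $\bar s/|s|$ and pair against $\overline{e^h_v}$. Exactly as in the computation producing \eqref{eq:4.10a}, the two displacement--temperature coupling terms of $\mathcal A$ add up to a purely imaginary quantity (because $(\mathrm{div}\,\mathbf e^h_{\mathbf u},\overline{e^h_\theta})_{\Omega_-}$ and $(e^h_\theta,\mathrm{div}\,\overline{\mathbf e^h_{\mathbf u}})_{\Omega_-}$ are complex conjugates), and the two displacement--fluid interface terms cancel for the same reason.

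Taking real parts and invoking the analogues of \eqref{eq:4.25}--\eqref{eq:4.27}, with $c_{\mathbb R^3\setminus\Gamma}$ now supplying the fluid energy on both sides of $\Gamma$, leaves the coercive lower bound
\[
\mathrm{Re}\{\cdots\}\ \ge\ \frac{\sigma}{|s|}\triple{\mathbf e^h_{\mathbf u}}^2_{|s|,\Omega_-}+\frac{\zeta}{\eta}\,\frac{\sigma}{|s|^2}\triple{e^h_\theta}^2_{|s|,\Omega_-}+\rho_f\,\frac{\sigma}{|s|}\triple{e^h_v}^2_{|s|,\mathbb R^3\setminus\Gamma}.
\]
Using the elementary inequality $\max\{1,|s|\}\,\underline\sigma\le|s|$ recorded just after \eqref{eq:4.6}, the right-hand side is in turn bounded below by $K\,\triple{(\mathbf e^h_{\mathbf u},e^h_\theta,e^h_v)}^2_{|s|}$ with $K=c\,\sigma\underline\sigma/|s|^{2}$, which plays the role of the coercivity constant.

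Next I would estimate $\ell$ of \eqref{eq:5.15}, evaluated at the rotated error tuple, term by term, each contribution treated by Cauchy--Schwarz and then converted into a product of an approximation-error norm and the energy norm of the error. The three interior terms carrying $\mathbf r^h_{\mathbf u}$ use $\|\mathbf e^h_{\mathbf u}\|_{\Omega_-}\le(\sqrt{\rho_\Sigma}\,|s|)^{-1}\triple{\mathbf e^h_{\mathbf u}}_{|s|,\Omega_-}$ together with \eqref{eq:4.4}, producing the factors $\|s^2\mathbf r^h_{\mathbf u}\|_{1,\Omega_-}$, $\|s\,\mathbf r^h_{\mathbf u}\|_{1,\Omega_-}$ and $\|\mathbf r^h_{\mathbf u}\|_{1,\Omega_-}$; the $r^h_\theta$ terms are handled with \eqref{eq:4.5}; and the two interface terms are controlled by the trace estimate \eqref{eq:4.30} applied to $\jump{\gamma e^h_v}$ and $\gamma^- e^h_v$, which turns the $H^{1/2}(\Gamma)$ norms into $\underline\sigma^{-1}\triple{e^h_v}_{|s|}$ and leaves $\|\lambda\|_{-1/2,\Gamma}$ in place. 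Collecting these yields $|\ell(\cdots)|\le M\,\triple{(\mathbf e^h_{\mathbf u},e^h_\theta,e^h_v)}_{|s|}$, and dividing the chain $K\,\triple{(\cdots)}^2_{|s|}\le|\ell(\cdots)|\le M\,\triple{(\cdots)}_{|s|}$ by $\triple{(\cdots)}_{|s|}$ delivers \eqref{eq:5.18}; reassuringly, the resulting prefactor $|s|^2/(\sigma\underline\sigma^3)$ matches the growth already recorded for $\pmb{\mathscr A}^{-1}$ in Theorem~\ref{thm:4.2}.

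The delicate point, and the one I expect to demand the most care, is the legitimacy of inserting $e^h_v$ as a test function. Whereas $\mathbf e^h_{\mathbf u}\in\mathbf V_h$ and $e^h_\theta\in W_h$, the fluid error satisfies $\jump{\gamma e^h_v}=\phi^h-\phi\notin Y_h$, so $e^h_v\notin V_h$; only $\gamma^- e^h_v=\gamma^- v^h\in X^\circ_h$ survives. Since the derivation of the error identity in Theorem~\ref{tn:5.1} used $\jump{\gamma w}\in Y_h$ precisely to annihilate the discrete Neumann residual $\partial^+_n e^h_v+s\,\mathbf n\cdot(\mathbf e^h_{\mathbf u}+\mathbf r^h_{\mathbf u})\in Y^\circ_h$, that step is not directly available for $w=e^h_v$, and a naive substitution would leave a spurious pairing of this residual against $\phi$. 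The remedy is to keep the ellipticity of $\mathcal A$ as a property of the bilinear form on all of $\mathbf H^1(\Omega_-)\times H^1(\Omega_-)\times H^1(\mathbb R^3\setminus\Gamma)$ and to exploit the membership relations of Theorem~\ref{tn:5.1} together with Galerkin orthogonality against $Y_h$ and $X_h$, just as the term $\rho_f\langle\lambda,\gamma^- w\rangle_\Gamma$ was generated in the proof of that theorem; this is the same device already used for the fluid--structure problem in \cite{HSW:2013}. Carrying out this cancellation cleanly, while keeping every power of $|s|$ and $\underline\sigma$ sharp enough to reach $|s|^2/(\sigma\underline\sigma^3)$---the high-order term $s^2\rho_\Sigma\mathbf r^h_{\mathbf u}$ and the interface traces being the most exacting---is where the real work of the proof resides.
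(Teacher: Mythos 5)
Your overall architecture is the paper's: test the error identity of Theorem~\ref{tn:5.1} with a rotated copy of the error tuple, extract coercivity with constant $\sigma\underline\sigma/|s|^2$ via the analogues of \eqref{eq:4.25}--\eqref{eq:4.27}, bound $\ell$ term by term, and divide. However, the one step you yourself flag as ``where the real work of the proof resides'' is precisely the step you do not carry out, and it is not a technicality: since $\jump{\gamma e^h_v}=\phi^h-\phi\notin Y_h$, the identity $\mathcal A(\cdot,\cdot)=\ell(\cdot)$ simply does not hold with $w=e^h_v$, and the residual pairing $\langle \partial_n^+ e^h_v+s\,\mathbf n\cdot(\mathbf e^h_{\mathbf u}+\mathbf r^h_{\mathbf u}),\phi\rangle_\Gamma$ that survives cannot be annihilated by Galerkin orthogonality (orthogonality kills it only against elements of $Y_h$, and $\phi$ is not one). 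Appealing to ``ellipticity of $\mathcal A$ on the whole space'' does not help, because ellipticity is not the issue; admissibility of the test function is.

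The paper's resolution is a concrete construction you are missing: pick a lifting $v_\phi\in H^1(\mathbb R^3\setminus\Gamma)$ with $\gamma^+v_\phi=\phi$ and $\gamma^- v_\phi=0$, so that $e^h_v+v_\phi$ \emph{does} lie in $V_h$ (its jump is $\phi^h\in Y_h$ and its interior trace is unchanged), and test with $(\mathbf e^h_{\mathbf u},e^h_\theta,e^h_v+v_\phi)$. One then writes $\mathcal A((\mathbf e^h_{\mathbf u},e^h_\theta,e^h_v+v_\phi),\cdot)=\ell(\cdot)+\mathcal A((\mathbf 0,0,v_\phi),\cdot)$ and uses $\triple{(\mathbf 0,0,v_\phi)}\le C\underline\sigma^{-1}\|s\phi\|_{1/2,\Gamma}$. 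This necessarily puts the extra term $\|s\phi\|_{1/2,\Gamma}$ on the right-hand side (it is visible in the paper's final display, in Corollary~\ref{co:5.3}, and in the remark explaining why it does not tend to zero and must later be replaced by $\|s(\phi-\Pi_{Y_h}\phi)\|_{1/2,\Gamma}$). Your claim that the interface terms can be absorbed leaving only $\|\lambda\|_{-1/2,\Gamma}$ is therefore not attainable by this Galerkin scheme: the $\phi$-dependent contribution is an intrinsic consistency error of the method, not an artifact one can estimate away. Until the lifting (or an equivalent device) is supplied, the proof has a genuine gap at its central step.
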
 
\begin{proof}
It is easy to see from the definition of the bilinear form $\mathcal{A}$ in \eqref{eq:5.14} that there is a constant C depending only on the geometry and physical parameters such that
\begin{equation}\label{eq:5.16}
|\mathcal{A} (e^h_{\mathbf u}, e^h_{\theta}, e^h_v), (\mathbf v, \vartheta, w); s)| \le C \frac{|s|}{\sigma \underline{\sigma}}
\triple{(e^h_{\mathbf u}, e^h_{\theta}, e^h_v)}_{1}~ \triple{(\mathbf v, \vartheta, w)}_{|s|} .
\end{equation}
We also need the estimate for the functional
\begin{equation}\label{eq:5.17}
|\ell((\mathbf v, \vartheta, w);s)| \le \frac{C}{\underline {\sigma}} \Big(\| \lambda \|_{-1/2,\; \Gamma} + \|s^2 \mathbf r^h_{\mathbf u} \|_{1, \Omega_-} + \|s \mathbf r^h_{\mathbf u} \|_{1, \Omega_-} + \|\mathbf r^h_{\mathbf u}\|_{1, \Omega_-}+ \|r^h_{\theta}\|_{1, \Omega_-}\Big)\triple{(\mathbf v, \vartheta, w)}_{|s|}.
\end{equation}
For $\phi \in H^{1/2} (\Gamma)$, we pick a lifting $v_{\phi}  \in H^1(\mathbb{R}^3 \setminus \Gamma)$ such that  $\gamma^+ v_{\phi}  = \phi, \gamma^-v_{\phi} =  0$. Thus,
\[
\| v_{\phi} \|_{1, \mathbb{R}^3 \setminus \Gamma} \le C \| \phi \|_{1/2, \Gamma}.
\]
Since  $(\mathbf e^h_{\mathbf u}, e^h_{\theta}, e^h_v + v_{\phi} ) \in \mathbf V_h \times W_h \times V_h$,  it follows from equations \eqref{eq:4.25}-\eqref{eq:4.27} that
\begin{align*}
\frac{\sigma \underline{\sigma} } {|s|^2}\triple{(\mathbf e^h_{\mathbf u}, e^h_{\theta}, e^h_v + v_{\phi} )}^2_{|s|} 
\le \;& 
\big |\mathcal{A}(( \mathbf e^h_{\mathbf u}, e^h_{\theta}, e^h_v + v_{\phi} ), (\mathbf e^h_{\mathbf u}, e^h_{\theta}, e^h_v + v_{\phi} ); s) \big |\\
 =\; & \big | \ell ( ( \mathbf e^h_{\mathbf u}, e^h_{\theta}, e^h_v + v_{\phi} ) ; s) + \mathcal{A} ((\mathbf 0, 0, v_{\phi}),  \mathbf e^h_{\mathbf u}, e^h_{\theta}, e^h_v + v_{\phi} ) ;s ) \big | \\
\le\; & \frac{C}{\underline\sigma}\triple{(\mathbf e^h_{\mathbf u},\; e^h_{\theta}, \; e^h_v+v_\phi)}_{|s|} \Big(\|s^2 \mathbf r^h_{\mathbf u} \|_{1, \Omega_-}\!\! + \|s \mathbf r^h_{\mathbf u} \|_{1, \Omega_-}\!\! + \|\mathbf r^h_{\mathbf u}\|_{1, \Omega_-}\!\!\\
&\;\; \qquad \qquad \qquad \qquad + \|r^h_{\theta}\|_{1, \Omega_-}\! +\| \lambda \|_{-1/2,\; \Gamma} + \!\frac{|s|}{\sigma}  \| v_{\phi} \|_{\mathbb{R}^3 \setminus \Gamma} \Big) \\
\le\;& \frac{C}{\underline{\sigma}^2}\triple{(\mathbf e^h_{\mathbf u},\; e^h_{\theta}, \; e^h_v + v_{\phi})}_{|s|} \Big(\|s^2 \mathbf r^h_{\mathbf u} \|_{1, \Omega_-}\!\! + \|s \mathbf r^h_{\mathbf u} \|_{1, \Omega_-}\!\! +  \|\mathbf r^h_{\mathbf u}\|_{1, \Omega_-}  \\ 
&\;\; \qquad \qquad \qquad \qquad +\|r^h_{\theta}\|_{1, \Omega_-} +   \| \lambda \|_{-1/2,\; \Gamma} + \|s \phi\|_{1/2, \Gamma}\Big).  
\end{align*}
And the result follows from this relation and the observation that
\[
\triple{(\mathbf 0,0,v_\phi)}\leq \frac{C}{\underline\sigma}\|s\phi\|_{1/2,\Gamma}
.\] 
\end{proof}
We are now in the position to establish the following result.
\begin{corollary}\label{co:5.3}
Let $(\mathbf u, \theta, \phi , \lambda) \in \mathbf H^1(\Omega_-) \times H^1(\Omega_-) \times H^{1/2}(\Gamma) \times H^{-1/2}(\Gamma) $ be the unique solution of the problem  {\em (\ref{eq:4.12})} and let  $(\mathbf u^h, \theta^h, \phi^h, \lambda^h) $ be their corresponding Galerkin solutions of 
{\rm (\ref{eq:5.6})}, then we  have the estimates 
\begin{subequations}
\begin{align}
\nonumber
\triple{(\mathbf e^h_{\mathbf u}, e^h_{\theta},  e^h_v)}_1   +  \| \phi^h - \phi \|_{ 1/2, \Gamma} \le\;& C \frac{|s|^2}{\sigma \underline{\sigma}^4}
  \Big( \| s \phi \|_{1/2, \Gamma} +   \| \lambda \|_{-1/2,\; \Gamma}\\
  \label{eq:5.19a}
   & +  \| s^2 \mathbf r^h_{\mathbf u} \|_{1, \Omega_-} + \|s \mathbf r^h_{\mathbf u} \|_{1, \Omega_-} +  \|\mathbf r^h_{\mathbf u}\|_{1, \Omega_-} + \|r^h_{\theta}\|_{1, \Omega_-} \Big)\\
\nonumber
\|\lambda^h - \lambda \|_{-1/2, \Gamma} \le\; & C \frac{|s|^{5/2}}{\sigma\underline{\sigma}^{7/2}} \Big(\| s \phi \|_{1/2, \Gamma} +   \| \lambda \|_{-1/2,\; \Gamma} \\
\label{eq:5.19b}
& + \| s^2 \mathbf r^h_{\mathbf u} \|_{1, \Omega_-} + \|s \mathbf r^h_{\mathbf u} \|_{1, \Omega_-} +  \|\mathbf r^h_{\mathbf u}\|_{1, \Omega_-} + \|r^h_{\theta}\|_{1, \Omega_-} \Big).
 \end{align}
 \end{subequations}
\end{corollary}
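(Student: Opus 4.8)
The plan is to read both estimates straight off the energy bound of Theorem~\ref{th:5.2} by two separate manipulations: converting the $\triple{\cdot}_{|s|}$ norms into the plain $\triple{\cdot}_1$ norms, and controlling the Cauchy-data errors $\phi^h-\phi$ and $\lambda^h-\lambda$ through the jumps of the single potential error $e^h_v$. The starting observation, inherited from \eqref{eq:5.3} and the jump relations for $\mathcal D(s)$ and $\mathcal S(s)$, is that
\[
\jump{\gamma e^h_v}=\phi^h-\phi\in H^{1/2}(\Gamma),\qquad \jump{\partial_n e^h_v}=\lambda^h-\lambda\in H^{-1/2}(\Gamma),
\]
so that both boundary errors are traces of the single function $e^h_v$ and can therefore be estimated using the quantity $\triple{e^h_v}_{|s|,\mathbb R^3\setminus\Gamma}$ already bounded in \eqref{eq:5.18} (whose right-hand side is understood to carry the term $\|s\phi\|_{1/2,\Gamma}$ that emerges in the proof of Theorem~\ref{th:5.2}).

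First I would pass from the $|s|$-weighted norm to the plain energy norm. Using the equivalence relations \eqref{eq:4.4}, \eqref{eq:4.5} and \eqref{eq:4.6}, together with $\underline\sigma\le 1$, each component obeys $\triple{\cdot}_1\le \underline\sigma^{-1}\triple{\cdot}_{|s|}$ (the displacement and potential parts being the worst, the thermal part costing only $\underline\sigma^{-1/2}$), hence
\[
\triple{(\mathbf e^h_{\mathbf u},e^h_\theta,e^h_v)}_1\le \frac{1}{\underline\sigma}\,\triple{(\mathbf e^h_{\mathbf u},e^h_\theta,e^h_v)}_{|s|}.
\]
Combining this with \eqref{eq:5.18} turns the prefactor $|s|^2/(\sigma\underline\sigma^3)$ into $|s|^2/(\sigma\underline\sigma^4)$, which is exactly the constant in \eqref{eq:5.19a}. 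For the Dirichlet-trace term I would then bound $\phi^h-\phi=\jump{\gamma e^h_v}$ by the standard trace inequality applied on both sides of $\Gamma$, exactly as in \eqref{eq:4.30}, obtaining
\[
\|\phi^h-\phi\|_{1/2,\Gamma}\le C\,\triple{e^h_v}_{1,\mathbb R^3\setminus\Gamma}\le \frac{C}{\underline\sigma}\,\triple{e^h_v}_{|s|,\mathbb R^3\setminus\Gamma};
\]
this is absorbed into the right-hand side of \eqref{eq:5.19a} without worsening the power count, which completes the first estimate.

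For the second estimate I would treat the co-normal trace $\lambda^h-\lambda=\jump{\partial_n e^h_v}$ by invoking the Bamberger--Ha-Duong optimal lifting bound already used in \eqref{eq:4.31}, namely
\[
\|\lambda^h-\lambda\|_{-1/2,\Gamma}\le c_2\Big(\tfrac{|s|}{\underline\sigma}\Big)^{1/2}\triple{e^h_v}_{|s|,\mathbb R^3\setminus\Gamma}.
\]
Feeding \eqref{eq:5.18} into this inequality multiplies the prefactor $|s|^2/(\sigma\underline\sigma^3)$ by $(|s|/\underline\sigma)^{1/2}$, producing $|s|^{5/2}/(\sigma\underline\sigma^{7/2})$ and hence \eqref{eq:5.19b}.

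The only genuinely delicate point is this last co-normal trace estimate: unlike the Dirichlet trace, the normal-derivative trace is not continuous on $H^1(\mathbb R^3\setminus\Gamma)$, and the bound relies on the $s$-explicit optimal lifting of Bamberger and Ha-Duong (see \cite[Proposition 2.5.2]{Sa:2015}). Carrying the correct factor $(|s|/\underline\sigma)^{1/2}$ there is precisely what pins down the exponents $|s|^{5/2}$ and $\underline\sigma^{7/2}$ in \eqref{eq:5.19b}; the remaining manipulations are routine uses of the norm equivalences \eqref{eq:4.4}--\eqref{eq:4.6} and the triangle inequality.
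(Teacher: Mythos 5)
Your proposal is correct and follows essentially the same route as the paper: the authors likewise obtain \eqref{eq:5.19a} by combining \eqref{eq:5.18} with the trace bound \eqref{eq:4.30} via $\phi^h-\phi=\jump{\gamma e^h_v}$, and \eqref{eq:5.19b} by combining \eqref{eq:5.18} with the Bamberger--Ha-Duong normal-trace bound \eqref{eq:4.31} via $\lambda^h-\lambda=\jump{\partial_n e^h_v}$. Your additional bookkeeping (the $\underline\sigma^{-1}$ cost of passing from $\triple{\cdot}_{|s|}$ to $\triple{\cdot}_1$ and the provenance of the $\|s\phi\|_{1/2,\Gamma}$ term from the proof of Theorem~\ref{th:5.2}) matches the exponents in the stated constants.
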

Regarding the proof of this result, we would only like to point out that the estimate \eqref{eq:5.19a} follows from a combined application of \eqref{eq:5.18} in Theorem \ref{th:5.2}, and \eqref{eq:4.30} by making use of the jump condition $\phi^h -\phi =\jump{\gamma e^h_v}$. On the other hand, in order to establish the estimate \eqref{eq:5.19b}, one has to recall that  $\lambda -\lambda^h =\jump{\partial_{\nu} e^h_v}$ and therefore an application of  \eqref{eq:4.31} combined with \eqref{eq:5.18} yields the desired inequality.

Corollary \ref{co:5.3} has the awkward aspect of being an error estimate where part of the right-hand side (the terms $\|s\phi\|_{1/2,\Gamma}$ and $\|\lambda\|_{-1/2,\Gamma}$) does not converge to zero. We now clarify why this is not so. Consider the best approximation operators 
\[
\Pi_{X_h} : H^{-1/2} (\Gamma) \mapsto X_h, \quad \mbox{and} \quad \Pi_{Y_h} : H^{1/2} (\Gamma)\mapsto Y_h.
\]
If we create data for the problem so that the exact solution is $(\mathbf 0,0,\Pi_{Y_h}\phi,\Pi_{X_h}\lambda)$, then the associated numerical solution will be the exact solution and there will be no error in the method. Therefore, by linearity, we can use $(\mathbf u,\theta,\phi-\Pi_{Y_h}\phi,\lambda-\Pi_{X_h}\lambda)$ as exact solution in Corollary \ref{co:5.3} and the numerical solution will be $(\mathbf u^h,\theta^h,\phi^h-\Pi_{Y_h}\phi,\lambda^h-\Pi_{X_h}\lambda)$. Consequently, we can substitute $\|s\phi\|_{1/2,\Gamma}+\|\lambda\|_{-1/2,\Gamma}$ by $\| s(\phi-\Pi_{Y_h}\phi)\|_{1/2,\Gamma}+\|\lambda-\Pi_{X_h}\lambda\|_{-1/2,\Gamma}$ in the right-hand-side of \eqref{eq:5.19a}-\eqref{eq:5.19b}.

If we now apply Proposition \ref{prop:4.3} to Corollary \ref{co:5.3}, we may obtain the following estimates in the time domain.
\begin{corollary}
\label{co:5.4}
If the exact solution quadruple satisfies  
\[
\big(\mathbf U, \Theta, \mathcal{L}^{-1} \{\phi \}, \mathcal{L}^{-1} \{\lambda \}\big )   \in  
W_+^4(\mathbf H^1(\Omega_-)) \times W_+^4 (H^1(\Omega_-) )\times W_+^5 ( H^{1/2} (\Gamma) ) \times W_+^4 ( H^{-1/2} (\Gamma) )
\]
then
\[
\mathcal{L}^{-1}\big \{(\mathbf e^h_{\mathbf u}, e^h_{\theta},  e^h_v)\big \} \in \mathcal{C}(\mathbb{R}; \mathbf H^1(\Omega_-) \times H^1(\Omega_-) \times H^1(\mathbb{R}^3 \setminus \Gamma))
\]
is causal and for $t \ge 0$
\begin{align*}
\triple{\mathcal{L}^{-1}\big\{( \mathbf e^h_{\mathbf u}, e^h_{\theta},  e^h_v)\big\}}_1   +  \| \mathcal{L}^{-1} \{ \phi^h - \phi\} \|_{ 1/2, \Gamma} 
\leq \; & C  t^2\max \{1, t^4 \} ~g_h(t), \\
\|\mathcal{L}^{-1}\{\lambda^h - \lambda\} \|_{-1/2, \Gamma} \le\;  & C {t^{3/2}} \max \{1, t^{7/2} \}~g_h(t), 
\end{align*}
where 
\begin{align*}
g_h(t) :=\; & \int_0^t \!\big ( \| \mathcal{P}_4 ( \mathcal{L}^{-1} \{ \dot{\phi}- \Pi_{Y_h} \dot{\phi}\}) (\tau) \|_{1/2, \Gamma} +  \| \mathcal{P}_4  ( \mathcal{L}^{-1}  \{ \lambda -\Pi_{X_h} \lambda  \}) (\tau) \|_{-1/2, \Gamma}  \big )\, d \tau\\
&+ \int_0^t \! \big ( \|\mathcal{P}_4( \ddot{ \mathbf U} - \mathbf P_h\ddot{\mathbf U}) (\tau) \|_{1, \Omega_-}
+  \|\mathcal{P}_4 ( \dot{\mathbf U} - \mathbf P_h \dot{\mathbf U}) (\tau) \|_{1, \Omega_-} \big )\, d \tau \\
&+ \int_0^t \! \big(\|\mathcal{P}_4 ( \mathbf U - \mathbf P_h\mathbf U ) (\tau) \|_{1, \Omega_-} 
+ \| \mathcal{P}_4 ( \Theta - Q_h\Theta ) (\tau) \|_{1, \Omega_-} \big )\, d \tau .
\end{align*}
\end{corollary}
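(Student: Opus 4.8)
The plan is to turn the frequency‑domain error bounds of Corollary~\ref{co:5.3} into time‑domain statements by reading each of them as a symbol estimate and feeding it into Proposition~\ref{prop:4.3}. I would start from the two estimates \eqref{eq:5.19a}--\eqref{eq:5.19b}, written with the non‑vanishing data $\|s\phi\|_{1/2,\Gamma}+\|\lambda\|_{-1/2,\Gamma}$ already replaced, as in the paragraph preceding the corollary, by the projection residuals $\|s(\phi-\Pi_{Y_h}\phi)\|_{1/2,\Gamma}+\|\lambda-\Pi_{X_h}\lambda\|_{-1/2,\Gamma}$. The structural point that makes Proposition~\ref{prop:4.3} applicable is that the Galerkin error operator sending the data quadruple to $(\mathbf e^h_{\mathbf u},e^h_\theta,e^h_v,\lambda^h-\lambda)$ is an analytic $\mathcal B(\cdot,\cdot)$‑valued function of $s$, inherited from the analyticity of $\pmb{\mathscr A}^{-1}$ and of the potentials $\mathcal D(s),\mathcal S(s)$, and that the target norms $\triple{\cdot}_1$, $\|\cdot\|_{1/2,\Gamma}$, $\|\cdot\|_{-1/2,\Gamma}$ are all independent of $s$. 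Unlike in Theorem~\ref{th:4.5}, no passage between weighted and unweighted energy norms is needed, so the bounds of Corollary~\ref{co:5.3} are already genuine symbol estimates.

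Next I would read off the symbol class from each prefactor. For \eqref{eq:5.19a} the factor $\tfrac{|s|^2}{\sigma\underline\sigma^4}$ has $|s|$‑exponent $\mu=2$ and $\sigma$‑decay $C_A(\sigma)\le c/\sigma^5$ on $(0,1]$, so Proposition~\ref{prop:4.3} applies with $k=\lfloor\mu+2\rfloor=4$ and $\varepsilon=k-(\mu+1)=1$, whence $C_\varepsilon(t)\,C_A(t^{-1})\lesssim t\,\max\{t,t^5\}=t^2\max\{1,t^4\}$. For \eqref{eq:5.19b} the factor $\tfrac{|s|^{5/2}}{\sigma\underline\sigma^{7/2}}$ gives $\mu=5/2$ and $C_A(\sigma)\le c/\sigma^{9/2}$, hence again $k=4$ but $\varepsilon=1/2$, and $C_\varepsilon(t)\,C_A(t^{-1})\lesssim t^{1/2}\max\{t,t^{9/2}\}=t^{3/2}\max\{1,t^{7/2}\}$. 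These are exactly the temporal prefactors in the statement, and the common value $k=4$ is what makes a single operator $\mathcal P_4$ govern all the terms.

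The remaining step is to identify the time‑domain data. Since $\mathbf P_h$, $Q_h$, $\Pi_{Y_h}$ and $\Pi_{X_h}$ act only in the spatial variables, they commute with $\partial_t$, so every factor $s^{j}$ multiplying a residual inside the bracket of Corollary~\ref{co:5.3} transfers to a time derivative of that residual: for instance $\mathcal L^{-1}\{s(\phi-\Pi_{Y_h}\phi)\}=\partial_t(\Phi-\Pi_{Y_h}\Phi)$, while $\mathcal L^{-1}\{s^2\mathbf r^h_{\mathbf u}\}=\ddot{\mathbf U}-\mathbf P_h\ddot{\mathbf U}$ and $\mathcal L^{-1}\{s\,\mathbf r^h_{\mathbf u}\}=\dot{\mathbf U}-\mathbf P_h\dot{\mathbf U}$. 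After this transfer all six residuals carry the same prefactor symbol, so I would apply Proposition~\ref{prop:4.3} term by term and sum, the common time factor pulling out of each term and the six integrals assembling into $g_h(t)$. The temporal regularity assumed on $(\mathbf U,\Theta,\mathcal L^{-1}\phi,\mathcal L^{-1}\lambda)$ enters here: it is imposed so that each datum obtained after this transfer lies in $W_+^4$, the common class demanded by $k=4$, and causality and continuity of the reconstructed error quadruple then follow from the corresponding assertions of Proposition~\ref{prop:4.3}.

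The step needing the most care, and the main obstacle, is the bookkeeping of the powers of $s$: one must decide consistently whether each $s^{j}$ is absorbed into the symbol (raising $\mu$, hence $k$, $\varepsilon$ and the temporal growth) or into the datum (raising the required temporal regularity and the order of differentiation inside $\mathcal P_k$). The clean route is to push every $s^{j}$ onto the datum, which keeps $\mu$ minimal and forces both estimates to share $k=4$; one then has to check that the stated $W_+^k$ regularity supplies each differentiated residual with membership in $W_+^4$. A secondary point to justify is the legitimacy of replacing $\|s\phi\|_{1/2,\Gamma}$ and $\|\lambda\|_{-1/2,\Gamma}$ by their projection‑residual counterparts; this is the linearity‑and‑exactness argument recalled just before the corollary, namely that data producing the exact solution $(\mathbf 0,0,\Pi_{Y_h}\phi,\Pi_{X_h}\lambda)$ generate no Galerkin error, so only $\phi-\Pi_{Y_h}\phi$ and $\lambda-\Pi_{X_h}\lambda$ survive in the bound.
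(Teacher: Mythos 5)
Your proposal is correct and is essentially the paper's own argument: the paper proves Corollary~\ref{co:5.4} precisely by applying Proposition~\ref{prop:4.3} to the two estimates of Corollary~\ref{co:5.3} (with the projection-residual replacement made in the preceding paragraph), and your identifications $\mu=2$, $k=4$, $\varepsilon=1$ for \eqref{eq:5.19a} and $\mu=5/2$, $k=4$, $\varepsilon=1/2$ for \eqref{eq:5.19b}, together with the transfer of each power of $s$ onto the corresponding residual as a time derivative, reproduce exactly the stated time factors and the six terms of $g_h(t)$. The only caveat is the regularity bookkeeping you yourself flag at the end, which is a matter of verifying the hypotheses rather than a gap in the method.
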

%
\section{Computational Aspects} \label{sec:6}
%
\subsection{Convolution Quadrature}
We present  a very brief description of the procedure used to obtain a full discretization using multistep-based Convolution Quadrature. The process was devised by  Christian Lubich \cite{Lu:1988a, Lub:1988}, and was designed employed originally for treating constitutional boundary integral equations \cite{Lu:1994, LuSc:1992} and in recent times has become a very powerful tool for the discretization of time domain problems. The present description is by no means comprehensive and is provided only for the sake of completeness, the interested reader is refereed to \cite{HaSa:2014}, where very detailed descriptions of the theory and implementation for both multi-step and multi-stage flavors of Convolution Quadrature are given.

Suppose that
\[
dim \,\mathbf{V}_h = N_1, \quad dim \,W_h = N_2, \quad dim\, Y_h = M_1, \quad \hbox{and}\quad dim \,X_h= M_2,
\]
and let 
\[
\{\bm{\mu}_j\}_{j = 1}^{N_1} ,\quad \{ \vartheta_j\}_{j=1}^{N_2},\quad  \{ \varphi_j \}_{j=1}^{M_1}  \; \hbox{, and }\quad  \{ \eta_j\}_{j=1}^{M_2}
\]
be the basis functions of the spaces $\mathbf{V}_h,\; W_h,\;  Y_h $, and $X_h $, respectively.  We choose a time-step $\Delta t > 0, $ and let us consider the uniform grid in time $t_n:= n \Delta t, $ for  $n\ge 0.$  We define $\mathbf A(s) \in \mathbb{C}^{(N_1+N_2+M_1+M_2)\times(N_1+N_2+M_1+M_2)}$ to be the stiffness matrix of equation (\ref{eq:5.2}). $\mathbf{A}(s)$ is a matrix valued function  of $ s \in \mathbb{C}_+$  whose structure is depicted in Figure \ref{fig:matrix}.
\begin{figure}{\centering\scalebox{.8}{
$\mathbf A(s) \sim \left(\begin{array}{c|c|c|c}
  & & & \\
 (\mathrm L(\boldsymbol\mu_i),\boldsymbol\mu_j)_{N_1\times N_1} & (\mathrm L(\boldsymbol\mu_i),\vartheta_j)_{N_1\times N_2} & (\mathrm L(\boldsymbol\mu_i),\varphi_j)_{N_1\times M_1} & \boldsymbol 0_{N_1\times M_2} \\ 
 & & & \\ \hline 
 & & & \\
 (\mathrm L(\boldsymbol\vartheta_i),\boldsymbol\mu_j)_{N_2\times N_1} & (\mathrm L(\vartheta_i),\vartheta_j)_{N_2\times N_2} & \boldsymbol 0_{N_2\times M_1} & \boldsymbol 0_{N_2\times M_2} \\ 
 & & & \\ \hline 
 & & & \\
(\mathrm L(\varphi_i),\boldsymbol\mu_j)_{M_1\times N_1} &  \boldsymbol 0_{M_1\times N_2} & (\mathrm L(\varphi_i),\varphi_j)_{M_1\times M_1} & (\mathrm L(\varphi_i),\eta_j)_{M_1\times M_2} \\
 & & & \\ \hline
 & & & \\
 \boldsymbol 0_{M_2\times N_1} & \boldsymbol 0_{M_2\times N_2} & (\mathrm L(\eta_i),\varphi_j)_{M_2\times M_1} & (\mathrm L(\eta_i),\eta_j)_{M_2\times M_2} \\
 & & &
\end{array}\right)$
}
\caption{The linear system associated to the discretization has the block structure of the schematic. The elastic and thermal unknowns and the acoustic unknowns are weakly coupled, reflecting the physical fact that the systems communicate only through boundary interactions between the acoustic and elastic variables.}\label{fig:matrix}}
\end{figure}
The data are sampled in time and tested to define vectors $\mathbf f_n \in \mathbb{R}^{(N_1+N_2+M_1+M_2)}$:
\begin{alignat*}{6}
f_{n, i} :=\;& (D_1(t_n), \bm{\mu}_i )_{\Omega_-}, \quad && i = 1, \cdots, N_1\\
f_{n, i} :=\;& (D_2(t_n), \vartheta_i)_{\Omega_-}, \quad &&  i =   N_1+1, \cdots, N_1+N_2\\
f_{n, i} :=\;& \langle D_3(t_n), \varphi_i\rangle_{\Gamma}, \quad && i = N_1+N_2 +1, \cdots, N_1+N_2 +M_1\\
f_{n, i} :=\; & \langle D_4(t_n), \eta_i, \rangle_{\Gamma}, \quad && i = N_1+N_2+M_1+1, \cdots, N_1+N_2+M_1+M_2,
\end{alignat*}
where $ D_i(t) =  \mathcal{L}\{ d_i \}, i =1, \cdots, 4 $  in Theorem \ref{thm:4.4}. The CQ discretization of (\ref{eq:5.2}) starts with the Taylor expansion
\begin{equation}\label{eq:6.1}
\mathbf{A}\left( \frac{ \gamma(z)}{\Delta t}\right) = \sum_{n=0}^\infty \mathbf A_n (\Delta t) z^n , \quad \gamma (z) = \frac{\alpha_0 + \cdots +\alpha_k z^{-k}} {\beta_0 + \cdots +\beta_k z^{-k}},
\end{equation}
where  $\gamma(z)$ characterizes the underlying k- multistep method, and is, therefore, usually referred to as the characteristic function of the linear multistep method.

For the discretization of (\ref{eq:5.2}), we seek the sequence of vectors ${\bf b}_n \in \mathbb{R}^{(N_1+N_2+M_1+M_2)}$ given by the recurrence:
\begin{equation}
\label{eq:6.2}
\mathbf A_0 (\Delta t) ~\mathbf b_n  = ~\mathbf {f}_n - \sum_{m=1}^n  \mathbf A_m (\Delta t)~ \mathbf b_{n-m}, \quad n \ge 0.
 \end{equation}
The coefficients $A_n(\Delta t)$ can be computed by means of Cauchy's integral formula
\[
\mathbf A_m(\Delta t) = \frac{1}{m!}\frac{d^{(m)}}{dz^{(m)}}\left(\mathbf A(\gamma(z)/\Delta t)\right)|_{z=0} = \frac{1}{2\pi i}\oint_{C} \zeta^{-m-1}\mathbf A(\gamma(\zeta)/\Delta t)\,d\zeta.
\]
For implementation purposes, the integration contour $C$ is taken to be a circle with radius $R_C$ dependent on the number of terms in the expansion and the specific value of the computer's machine epsilon \cite{HaSa:2014}. This choice of contour allows for fast and accurate computation of the coefficients exploiting the properties of the trapezoidal rule and the Fast Fourier Transform. 

If the solution of (\ref{eq:6.2}) assumes the form 
${\bf{b}}_n = (b_{n,1}, \cdots, b_{n, (N_1+N_2+M_1+M_2)})$, then the Galerkin solutions of (\ref{eq:5.2}) 
at $t_n$ are given by
\begin{alignat*}{6}
\mathbf u^h_n = \;& \sum_{j=1}^{N_1}  b_{n,j} \bm{\mu}_j,  & \qquad
\theta_n^h =\;& \sum_{j=N_1+1}^{N_1+N_2}  b_{n,j} \vartheta_j ,\\
\nonumber & & & &\\
\phi^h_n =\;&  \sum_{j=N_1+N_2+1}^{N_1+N_2+M_1}  b_{n,j} \varphi_j,  &\qquad
\lambda^h_n =\;& \sum_{j=N_1+N_2+M_1+1}^{N_1+N_2+M_1+M_2} b_{n, j} \eta_j.
\end{alignat*}
%
\subsection{A combined approach for time evolution}
The linear system arising from the discretization and depicted in Figure \ref{fig:matrix} can be thought of as having the block structure
\[
\left[\begin{array}{cc} \mathrm{\mathbf{FEM}}(s) & s\rho_f(\mathrm N\Gamma)_h^t \\ -s\rho_f(\mathrm N\Gamma)_h &\mathrm{\mathbf{BEM}}(s) \end{array}\right]
\left[\begin{array}{c} \left[\begin{array}{c}\mathbf u^h \\ \theta^h \end{array}\right] \\[2.5ex] \left[\begin{array}{c} \lambda^h \\ \phi^h 
\end{array}\right] \end{array}\right] = 
\left[\begin{array}{c} 
\left[\begin{array}{c}
	\!\!\!-s\rho_f\Gamma_h^t\beta^h \!\!\!\\ \eta^h 
\end{array}\right] \\[2.5ex]
\left[\begin{array}{c}
	 0 \\ \rho_f\alpha^h
\end{array}\right]
 \end{array}\right],
\] 
where the sparse Finite Element block
\begin{align*}
\mathbf{FEM}(s):=\;&\; s^2\left[\begin{array}{cc}(\rho_\Sigma \mathbf u_j,\boldsymbol v_i)_{\Omega_-} & 0 \\ 0 & 0 \end{array}\right] + s\left[\begin{array}{cc} 0 & 0 \\ -(\boldsymbol\eta\mathbf u_j, \nabla v_i)_{\Omega_-} & (\theta_j,v_i)_{\Omega_-}\end{array}\right] \\
 		&+\left[\begin{array}{cc} (\mathbf C\boldsymbol\varepsilon(\mathbf u_j),\boldsymbol\varepsilon(\boldsymbol v_i))_{\Omega_-} & -(\boldsymbol\zeta\theta_j,\boldsymbol\varepsilon(\boldsymbol v_i))_{\Omega_-} \\ 0 & (\boldsymbol\kappa\nabla\theta_j,\nabla v_i)_{\Omega_-}\end{array}\right]
\end{align*}
contains mass and stiffness matrices as well as first order terms related to the elastic and thermal unknowns. The boundary element block $\mathbf{BEM}(s)$ contains the Galerkin discretization of the operators of the acoustic Calder\'on calculus and the coupling trace matrix $(\mathrm N\Gamma)_h$ is the discretization of the bilinear form arising from the duality pairing $\langle\mathbf u^h\cdot\boldsymbol\nu,\chi^h\rangle_\Gamma$.  

\begin{figure}[b]{\centering
\includegraphics[scale=0.5]{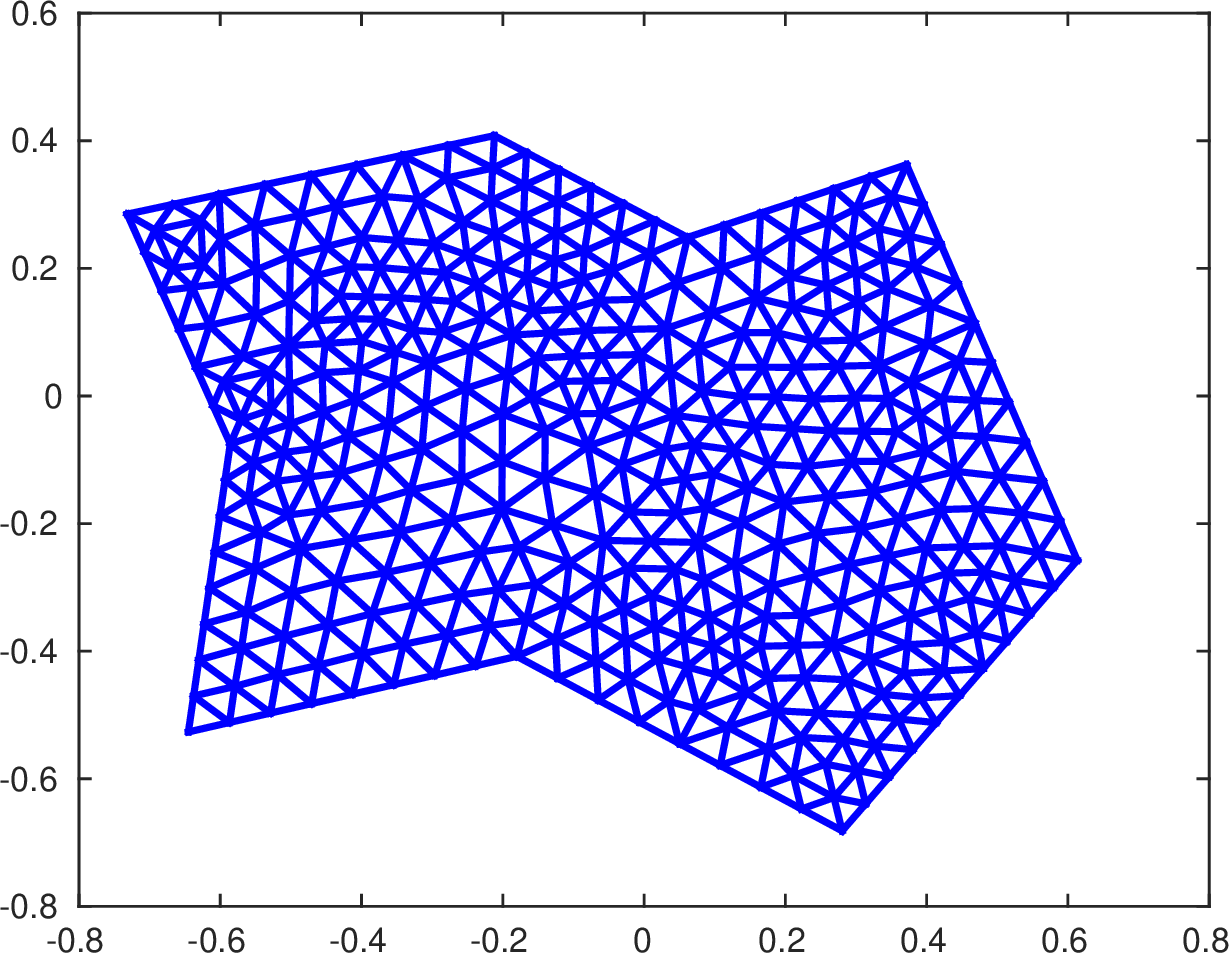}
\caption[Thermoelastic computational domain]{{\footnotesize Interior geometry used in the numerical experiments for both frequency-domain and time-domain studies. The domain was generated and meshed using Matlab's pdetool and refined uniformly using pde tool's refinement capabilities.}}\label{fig:c5:2}
}
\end{figure}
Even if a CQ approach can be applied to the entire system using the expansion \eqref{eq:6.1} on  the global matrix and solving the resulting linear system \eqref{eq:5.18},   it is a common practice to decouple the computations of the finite element and boundary element unknowns via a Schur complement strategy. The decoupled  Boundary Element unknowns are then evolved in time using CQ (via \eqref{eq:6.1} and \eqref{eq:5.18}), while the same underlying multistep scheme is used for the Finite Element unknowns. This process was first described in \cite{BaSa:2009} and is explained in detail  in \cite{HaSa:2015}, where it is used in the context of  for purely acoustic waves.
%
\subsection{Numerical Experiments}
%
In order to test numerically the formulations of the previous sections, computational convergence studies were performed in both frequency and time domains for 2D test problems. Moreover, we explore numerically the case when the Lam\'e parameters or the thermal diffusivity and expansion are non constant tensors. We emphasize that the goal of the computations presented here is mainly to provide a proof of concept of the suggested discretization and to highlight the fact that the discretization can be readily implemented with only minor additions to existing code. The previous analysis remains valid in 3D and the implementation of the  discretization in that case can be done following completely analogous steps. 

\textbf{The computational domain.} For the convergence studies, the interior domain $\Omega_-$ where the thermoelastic equations were imposed was the polygon depicted in Figure \ref{fig:c5:2}. The domain was generated and meshed using Matlab's pdetool. All the mesh refinements were done using the refinement capabilities of the pde toolbox.

\textbf{Approximation errors.} As a measurement of the accuracy of the approximations, the difference between the manufactured solution and the approximate Finite Element solutions was measured in the $L^2(\Omega_-)$ and $H^1(\Omega_-)$ norms for the elastic and thermal variables $\mathbf u^h$ and $\theta^h$. For the acoustic unknown $v^h$, the approximate solution was sampled in 25 randomly placed points in $\Omega_+$ and the maximum absolute difference with the exact solution was taken as a measure of the error. In the time domain experiments this measurements were done for the a final time $t=1.5$. 


\textbf{Physical parameters.} The following values of the physical parameters are functions only of space and were used equally for both series of experiments. They are chosen for validation and expository purposes only and do not correspond with any relevant physical material. For the entries of the tensors we make use of the symmetries and of Voigt's notation \cite{Gu:1972} to shorten the subscripts.  
\begin{enumerate}
\item Density  of the elastic solid and Lam\'e parameters
\begin{equation}\label{eq:c5:num:1}
\rho_\Sigma = 5 + \sin{(x)}\sin{(y)},\qquad \lambda = 2, \qquad \mu =3.
\end{equation}
\item Thermal expansion tensor $\boldsymbol\zeta$:
\begin{equation}\label{eq:c5:num:3} 
 \boldsymbol\zeta_1\leftrightarrow\, \boldsymbol\zeta_{11} \!=\! \sin{(x)}+\cos{(y)}, \quad \boldsymbol\zeta_2 \leftrightarrow \,\boldsymbol\zeta_{22} \!=\! -\sin{(y)}, \quad \boldsymbol\zeta_3 \leftrightarrow\, \boldsymbol\zeta_{12} \!=\! \boldsymbol\zeta_{21} \!=\! \cos{(x)}.
\end{equation} 
\item Thermal diffusivity tensor $\boldsymbol\kappa$:
\begin{equation}\label{eq:c5:num:4} 
 \boldsymbol\kappa_1\leftrightarrow\, \boldsymbol\kappa_{11}  = 10 + x^2, \quad \boldsymbol\kappa_2 \leftrightarrow \,\boldsymbol\kappa_{22}  = 10+ y, \quad \boldsymbol\kappa_3 \leftrightarrow\, \boldsymbol\kappa_{12}= \boldsymbol\kappa_{21} = 0.
\end{equation}
\item The components of the tensor $\boldsymbol\eta$ were chose to be:
\begin{equation}\label{eq:c5:num:5} 
 \boldsymbol\eta_1\leftrightarrow\, \boldsymbol\eta_{11}  = 1, \quad \boldsymbol\eta_2 \leftrightarrow \,\boldsymbol\eta_{22}  = x + y, \quad \boldsymbol\eta_3 \leftrightarrow\, \boldsymbol\eta_{12}= \boldsymbol\eta_{21} = 5+x+y.
\end{equation}
\end{enumerate}
\textbf{Convergence studies in the frequency domain.} We first verify the results in the frequency domain. We proceed by the method of manufactured solutions using the functions
\begin{subequations}
\begin{alignat*}{6}
\mathbf u :=&\, (x^3+xy+y^3,\sin{(x)}\cos{(y)}), \qquad&& \theta := \sin^2{(\pi x)}\sin^2{(y)}, \\
v :=&\, \tfrac{i}{4}H_0^{(1)}(isr), \quad&& r = \sqrt{x^2+y^2},
\end{alignat*}
\end{subequations}
together with the parameters defined in \eqref{eq:c5:num:1} through \eqref{eq:c5:num:5}. Right-hand side load vectors and boundary conditions were constructed accordingly. 

For the numerical experiments, Lagrangian $\mathcal P_k$ finite elements were used for the elastic and thermal unknowns, while Galerkin $\mathcal P_k/\mathcal P_{k-1}$ continuous/discontinuous Boundary Elements were used for the acoustic potential $v$. Convergence studies for spatial refinements with a fixed polynomial degree (\textit{h-convergence}) and increasing degree of polynomial approximation with a fixed mesh size (\textit{p-convergence}) were performed for $s=2.8i$. The results of the mesh-refinement experiments are shown in Tables \ref{tab:c5:1}, \ref{tab:c5:2}, and \ref{tab:c5:3}. The table \ref{tab:pconvergence} contains the results for a fixed mesh with increasing polynomial degree for the basis functions. The convergence plots for all the simulations are displayed in Figure \ref{fig:c5:3}.

%
\textbf{Convergence studies in the time domain.} In a way analogous to the previous section, the numerical experiments were carried out using the physical parameters and coefficients given in \eqref{eq:c5:num:1} through \eqref{eq:c5:num:5} and with manufactured solutions using the functions
\begin{subequations}
\begin{alignat*}{6}
\mathbf u :=&\, \mathrm T(t) (x^3+xy+y^3,\sin{(x)}\cos{(y)}), \qquad&& \theta :=  \mathrm T(t) \sin^2{(\pi x)}\sin^2{(y)}, \\
v :=&\,\mathcal{L}^{-1}\left\{iH^{(1)}_0(i s r)\,\mathcal{L}\{\mathcal{H}(t)\sin(3t)\} \right\}, \qquad&& r := \sqrt{x^2+y^2},
\end{alignat*}
where $\mathcal L\{\cdot\}$ is the Laplace transform, the time factor $\mathrm T(t)$ is given by
\begin{equation}
\mathrm T := \mathcal H(t)(t^2+2t),
\end{equation}
\end{subequations}
and $\mathcal H(t)$ is the $\mathcal C^5$ approximation to Heaviside's step function
\begin{equation*}\label{eq:c3:HH}
\mathcal H(t)\!:= \!t^5(1-5(t-1)\!+\!15(t-1)^2\!-\!35(t-1)^3\!+\!70(t-1)^4\!-\!126(t-1)^5)\chi_{[0,1]}(t)\!+\!\chi_{[1,\infty)}(t).
\end{equation*}
Two kinds of experiments were carried out using the same geometry as in the frequency domain. On the one hand, for a spatial discretization with fixed polynomial degree, successive dyadic refinements in both mesh size $h$ and time step $\Delta t$  were carried out (\textit{h-refinement}). The experiment was repeated for polynomial degrees $k=1,2$, and 3 starting with with a spatial mesh with parameter $h=1 \times 10^{-1}$ and time step $\Delta t = 3.75 \times 10^{-2}$. 

The second experiment corresponds to \textit{p-refinements} in space and consisted in using a fixed spatial mesh, starting with a polynomial discretization of degree $k=1$ in space and a time step $\Delta t = 3.75 \times 10^{-2}$. With every successive dyadic refinement of $\Delta t$, the degree of the polynomial interpolant was increased by one.  The initial mesh size of $h=5.016 \times 10^{-2}$ corresponds to the second level of refinement used for the \textit{h-refinement} experiments. This space-time refinement strategies highlight the global order of convergence of the method, which is expected to be asymptotically limited by the order of the multi-step scheme used for time discretization.

Both strategies were tried for BDF2 and Trapezoidal Rule based time discretizations. The results for the studies using BDF2 are shown in Tables \ref{tab:BDF2k1}, \ref{tab:BDF2k2}, \ref{tab:BDF2k3} (h-refinement), and \ref{tab:BDF2pref} (p-refinement). These results are summarized in the convergence plot of Figure \ref{fig:BDF2convergence}. Similarly, the results for the experiments using Trapezoidal Rule time stepping are shown in Tables \ref{tab:TRk1}, \ref{tab:TRk2}, \ref{tab:TRk3} (h-refinement), and \ref{tab:TRpref} (p-refinement), all condensed on the convergence plots shown in Figure \ref{fig:TRconvergence}.

Depending on the refinement strategy, the number of degrees of freedom required to approximate to the system increases quickly, especially for h-refinements with higher order polynomial basis. Table \ref{tab:ndof} shows the number of unknowns associated to a single scalar FEM function represented in the grid shown in Figure \ref{fig:c5:2}. The increase in computational requirements imposed by h-refinement makes some asymptotic properties of the scheme difficult to observe following this strategy. 

In particular, the smoothing properties of the parabolic part of the system introduce superconvergent behavior on the thermal unknowns during the pre-asymptotic regime. As can be seen in the p-refinement experiments (c.f. Figures \ref{fig:BDF2convergence} and  \ref{fig:TRconvergence}, bottom right) the convergence stabilizes to the predicted rate for relatively small time step, after five refinement levels. The number of spatial degrees of freedom required to achieve such a discretization level by h-refinements causes the true convergence rate to be observable only using a p-refinement strategy.
\begin{table}[tbp]
\centering \scalebox{0.9}{\!
\begin{tabular}{|c|cccccc|}
\hline
\textbf{Growth in FEM DOF} & \multicolumn{6}{c|}{Refinement Level}  \\ \hline
Refinement Strategy    & \multicolumn{1}{c|}{1} & \multicolumn{1}{c|}{2} & \multicolumn{1}{c|}{3} & \multicolumn{1}{c|}{4} & \multicolumn{1}{c|}{5} & \multicolumn{1}{c|}{6}     \\ \hline
h-refinement, $k=1$ & 108  & 394   & 1503   & 5869   & 23193  & 92209  \\ \cline{1-7}
h-refinement, $k=2$ & 394  & 1503  & 5869   & 23193  & 92209  & 161225 \\ \cline{1-7}
h-refinement, $k=3$ & 859  & 3328  & 13099  & 51973  & 207049 & 413537 \\ \cline{1-7}
p-refinement        & 108  & 394   & 859    & 1503   & 2326   & 3328   \\ \hline
\end{tabular}
}
\caption{{\footnotesize Number of Finite Element degrees of freedom required to represent a single scalar function defined on the mesh depicted in Figure \ref{fig:c5:2}. The number increases depending on the chosen refinement strategy, with the slowest growth rate being the one associated to p-refinements. }}
\label{tab:ndof}
\end{table}

\textbf{Examples.} We now present a couple of illustrative examples in 2D. The first example shows the interaction between the plane wave
\[
v^{inc}= 3\chi_{[0,0.3]}(88\tau)\sin{(88\tau)},\quad \tau:= t-\mathbf r\cdot\mathbf d,\quad \mathbf r:=(x,y),\quad \mathbf d := (1,5)/\sqrt{26},
\]
and a pentagonal scatterer with mass density given by
\[
\rho_\Sigma = 15 + 40e^{-49\, r^2}, \qquad r := \sqrt{x^2+y^2}.
\]
The values of the elastic parameters, thermic diffusivity $\boldsymbol\kappa$, thermoelastic expansion tensors $\boldsymbol\zeta$  and $\boldsymbol\eta$ were the same as those used for the convergence experiments in the previous paragraphs and given in equations \eqref{eq:c5:num:1}-\eqref{eq:c5:num:5}. The simulation used $\mathcal P_2$ Lagrangian finite elements on a grid with mesh parameter $h=7 \times 10^{-3}$ and 36096 elements. The inherited boundary element grid had 496 panels and a grid parameter of $h=9.1 \times 10^{-3}$, and $\mathcal P_2/\mathcal P_1$ continuous/discontinuous Galerkin boundary elements were used. Trapezoidal rule-based discretization was applied in time with a time step $\Delta t=1\times 10^{-2}$. Some snapshots of the simulation are shown in Figures \ref{fig:c5:5}-\ref{fig:c5:7}.

The second example is a trapping geometry with density $\rho_\Sigma = 20 + |x|+|y|$ and physical parameters given by  \eqref{eq:c5:num:1}-\eqref{eq:c5:num:5}. For this example $\mathcal P_5$ Lagrangian elements were used on a grid with 2992 elements and mesh parameter $h=1.72 \times 10^{-2}$, the acoustic equations were discretized with  $\mathcal P_5/\mathcal P_4$ continuous/discontinuous Galerkin boundary elements on a mesh with 236 panels  and mesh parameter $h=2.5 \times 10^{-2}$. For time discretization trapezoidal rule-based CQ was used with a time step size of $\Delta t=2 \times \times 10^{-3}$. Figures \ref{fig:c5:8}-\ref{fig:c5:10} show snapshots of the acoustic, elastic and temperature fields.

\paragraph{Acknowledgements.\\}
The authors would like to thank the referees for their detailed comments and suggestions, which greatly improved the quality of this communication.

\textit{In respectful memory of Prof. Richard Weinacht.}

\newpage
\begin{table}\centering
\scalebox{0.785}{\!
\begin{tabular}{ccccccccccccc}
\hline
\multicolumn{3}{|c|}{$k=1$} & \multicolumn{4}{|c|}{$L^2(\Omega_-)$} & \multicolumn{4}{|c|}{$H^1(\Omega_-)$} \\
\hline
\multicolumn{1}{|c|}{$h$} & \multicolumn{1}{c|}{$E^{v}_{h}$} & \multicolumn{1}{c|}{e.c.r.} & \multicolumn{1}{c|}{$E^{\mathbf u}_{h}$} & \multicolumn{1}{c|}{e.c.r.} & \multicolumn{1}{c|}{$E^{\theta}_{h,\kappa}$} & \multicolumn{1}{c|}{e.c.r.} & \multicolumn{1}{c|}{$E^{\mathbf u}_{h}$} & \multicolumn{1}{c|}{e.c.r.} & \multicolumn{1}{c|}{$E^{\theta}_{h}$} & \multicolumn{1}{c|}{e.c.r.} \\ \hline
 1 E-1  & 1.787 E-2  & ---   & 3.999 E-2  & ---   & 2.015 E-3  & ---   & 2.011 E-1 & ---   & 7.430 E-2     & ---    \\ \hline
5.016 E-2 & 7.292 E-3  & 1.293 & 1.675 E-2  & 1.255 & 6.397 E-4  & 1.656  & 8.733 E-2 & 1.203 & 3.746 E-2 & 0.988  \\ \hline
2.508 E-2 & 2.272 E-3  & 1.683 & 5.344 E-3  & 1.648 & 1.837 E-4 & 1.799 & 3.297 E-2 & 1.405 & 1.876 E-2 & 0.976  \\ \hline 
1.254 E-2 & 6.099 E-4  & 1.897 & 1.447 E-3  & 1.885 & 4.824 E-5 & 1.929 & 1.314 E-2 & 1.327 & 9.383 E-3 & 0.996  \\ \hline
6.27 E-3 & 1.556 E-4  & 1.971 & 3.703 E-4  & 1.966 & 1.223 E-4 & 1.980 & 5.961 E-3 & 1.141 & 4.692 E-3 & 1.000  \\ \hline  
\end{tabular}
}
\caption{{\footnotesize The experiments were run using $\mathcal P_k$ Lagrangian finite elements and $\mathcal P_k/\mathcal P_{k-1}$ boundary elements. This table shows the relative errors and estimated convergence rates in the frequency domain for $k=1$. The maximum length of the panels used to discretize the boundary is denoted by $h$.}}\label{tab:c5:1}
\end{table}
\begin{table}\centering
\scalebox{0.785}{\!
\begin{tabular}{ccccccccccccc}
\hline
\multicolumn{3}{|c|}{$k=2$} & \multicolumn{4}{|c|}{$L^2(\Omega_-)$} & \multicolumn{4}{|c|}{$H^1(\Omega_-)$} \\
\hline
\multicolumn{1}{|c|}{$h$} & \multicolumn{1}{c|}{$E^{v}_{h}$} & \multicolumn{1}{c|}{e.c.r.} & \multicolumn{1}{c|}{$E^{\mathbf u}_{h}$} & \multicolumn{1}{c|}{e.c.r.} & \multicolumn{1}{c|}{$E^{\theta}_{h,\kappa}$} & \multicolumn{1}{c|}{e.c.r.} & \multicolumn{1}{c|}{$E^{\mathbf u}_{h}$} & \multicolumn{1}{c|}{e.c.r.} & \multicolumn{1}{c|}{$E^{\theta}_{h}$} & \multicolumn{1}{c|}{e.c.r.} \\ \hline
 1 E-1  & 7.926 E-5  & ---   & 1.284 E-4  & ---   & 9.742 E-5  & ---   & 3.514 E-3 & ---   & 6.446 E-3  & ---    \\ \hline
5.016 E-2 & 6.676 E-6  & 3.570 & 1.181 E-5  & 3.442 & 1.214 E-5  & 3.004  & 8.708 E-4 & 2.013 & 1.630 E-3 & 1.983  \\ \hline
2.508 E-2 & 5.590 E-7  & 3.578 & 1.207 E-6  & 3.290 & 1.517 E-6 & 3.000 & 2.172 E-4 & 2.003 & 4.093 E-4 & 1.993  \\ \hline 
1.254 E-2 & 4.630 E-8  & 3.594 & 1.331 E-7  & 3.181 & 1.897 E-7 & 2.999 & 5.426 E-5 & 2.001 & 5.426 E-5 & 1.997  \\ \hline
6.27 E-3 & 3.793 E-9  & 3.609 & 1.550 E-8  & 3.103 & 2.373 E-8 & 2.999 & 1.356 E-8 & 2.001 & 2.566 E-5 & 1.999  \\ \hline  
\end{tabular}
}
\caption{{\footnotesize The experiments were run using $\mathcal P_k$ Lagrangian finite elements and $\mathcal P_k/\mathcal P_{k-1}$ boundary elements. This table shows the relative errors and estimated convergence rates in the frequency domain for $k=2$. The maximum length of the panels used to discretize the boundary is denoted by $h$.}}\label{tab:c5:2}
\end{table}
\begin{table}\centering
\scalebox{0.76}{\!
\begin{tabular}{ccccccccccccc}
\hline
\multicolumn{3}{|c|}{$k=3$} & \multicolumn{4}{|c|}{$L^2(\Omega_-)$} & \multicolumn{4}{|c|}{$H^1(\Omega_-)$} \\
\hline
\multicolumn{1}{|c|}{$h$} & \multicolumn{1}{c|}{$E^{v}_{h}$} & \multicolumn{1}{c|}{e.c.r.} & \multicolumn{1}{c|}{$E^{\mathbf u}_{h}$} & \multicolumn{1}{c|}{e.c.r.} & \multicolumn{1}{c|}{$E^{\theta}_{h,\kappa}$} & \multicolumn{1}{c|}{e.c.r.} & \multicolumn{1}{c|}{$E^{\mathbf u}_{h}$} & \multicolumn{1}{c|}{e.c.r.} & \multicolumn{1}{c|}{$E^{\theta}_{h}$} & \multicolumn{1}{c|}{e.c.r.} \\ \hline
 1 E-1  & 6.847 E-7  & ---   & 1.726 E-6  & ---   & 4.564 E-6  & ---   & 9.540 E-6 & ---   & 4.018 E-4  & ---    \\ \hline
5.016 E-2 & 3.869 E-8 & 4.145 & 9.804 E-8  & 4.138 & 2.886 E-7 & 3.983  & 7.701 E-7 & 3.631 & 5.044 E-5 & 2.994  \\ \hline
2.508 E-2 & 2.279 E-9 & 4.085 & 5.794 E-9  & 4.081 & 1.810 E-8 & 3.995 & 7.600 E-8 & 3.341 & 6.312 E-6 & 2.998  \\ \hline 
1.254 E-2 & 1.375 E-10 & 4.051 & 3.502 E-10  & 4.048 & 1.132 E-9 & 3.999 & 8.504 E-9 & 3.160 & 7.892 E-7 & 3.000   \\ \hline
6.27 E-3  & 8.468 E-12  & 4.021 & 2.141 E-11  & 4.032 & 7.076 E-11 & 4.000 & 1.011 E-9 & 3.072 & 9.866 E-8 & 3.000  \\ \hline  
\end{tabular}
}
\caption{{\footnotesize The experiments were run using $\mathcal P_k$ Lagrangian finite elements and $\mathcal P_k/\mathcal P_{k-1}$ boundary elements. This table shows the relative errors and estimated convergence rates in the frequency domain for $k=3$. The maximum length of the panels used to discretize the boundary is denoted by $h$.}}\label{tab:c5:3}
\end{table}
\begin{table}\centering
\scalebox{.78}{\!
\begin{tabular}{ccccccccccccc}
\hline
\multicolumn{3}{|c|}{$h=0.1$} & \multicolumn{4}{|c|}{$L^2(\Omega_-)$} & \multicolumn{4}{|c|}{$H^1(\Omega_-)$} \\
\hline
\multicolumn{1}{|c|}{Ndof (degree)} & \multicolumn{1}{c|}{$E^{v}_{h}$} & \multicolumn{1}{c|}{e.c.r.} & \multicolumn{1}{c|}{$E^{\mathbf u}_{h}$} & \multicolumn{1}{c|}{e.c.r.} & \multicolumn{1}{c|}{$E^{\theta}_{h,\kappa}$} & \multicolumn{1}{c|}{e.c.r.} & \multicolumn{1}{c|}{$E^{\mathbf u}_{h}$} & \multicolumn{1}{c|}{e.c.r.} & \multicolumn{1}{c|}{$E^{\theta}_{h}$} & \multicolumn{1}{c|}{e.c.r.} \\ \hline
108 (1) & 1.787 E-2 &--- & 3.999 E-2 &--- & 2.015 E-3 &--- & 2.011 E-1 &--- & 7.430 E-2 & ---\\ \hline
394  (2)& 7.926 E-5 &--- & 1.284 E-4 &--- & 9.742 E-5 &--- & 3.514 E-3 &--- & 6.446 E-3 & ---\\ \hline
859  (3)& 6.848 E-7 &--- & 1.726 E-6 &--- & 4.564 E-6 &--- & 9.540 E-6 &--- & 4.018 E-4 & ---\\ \hline
1503  (4)& 5.185 E-9 &--- & 1.154 E-8 &--- & 1.503 E-7 &--- & 2.861 E-7 &--- & 2.042 E-5 & ---\\ \hline
2326  (5)& 1.241 E-10 &--- & 3.533 E-10 &--- & 5.814 E-9 &--- & 9.008 E-9 &--- & 8.133 E-7 & ---\\ \hline
\end{tabular}
}
\caption{{\footnotesize Frequency-domain p-convergence studies. The experiments were run on a fixed mesh with parameter $h=0.1$ using $\mathcal P_k$ Lagrangian finite elements and $\mathcal P_k/\mathcal P_{k-1}$ boundary elements with $k=1,\ldots,5$. The relative error is shown as a function of the number of degrees of freedom (Ndof).}}\label{tab:pconvergence}
\end{table}
\begin{figure}{\centering
\includegraphics[width = .49\linewidth]{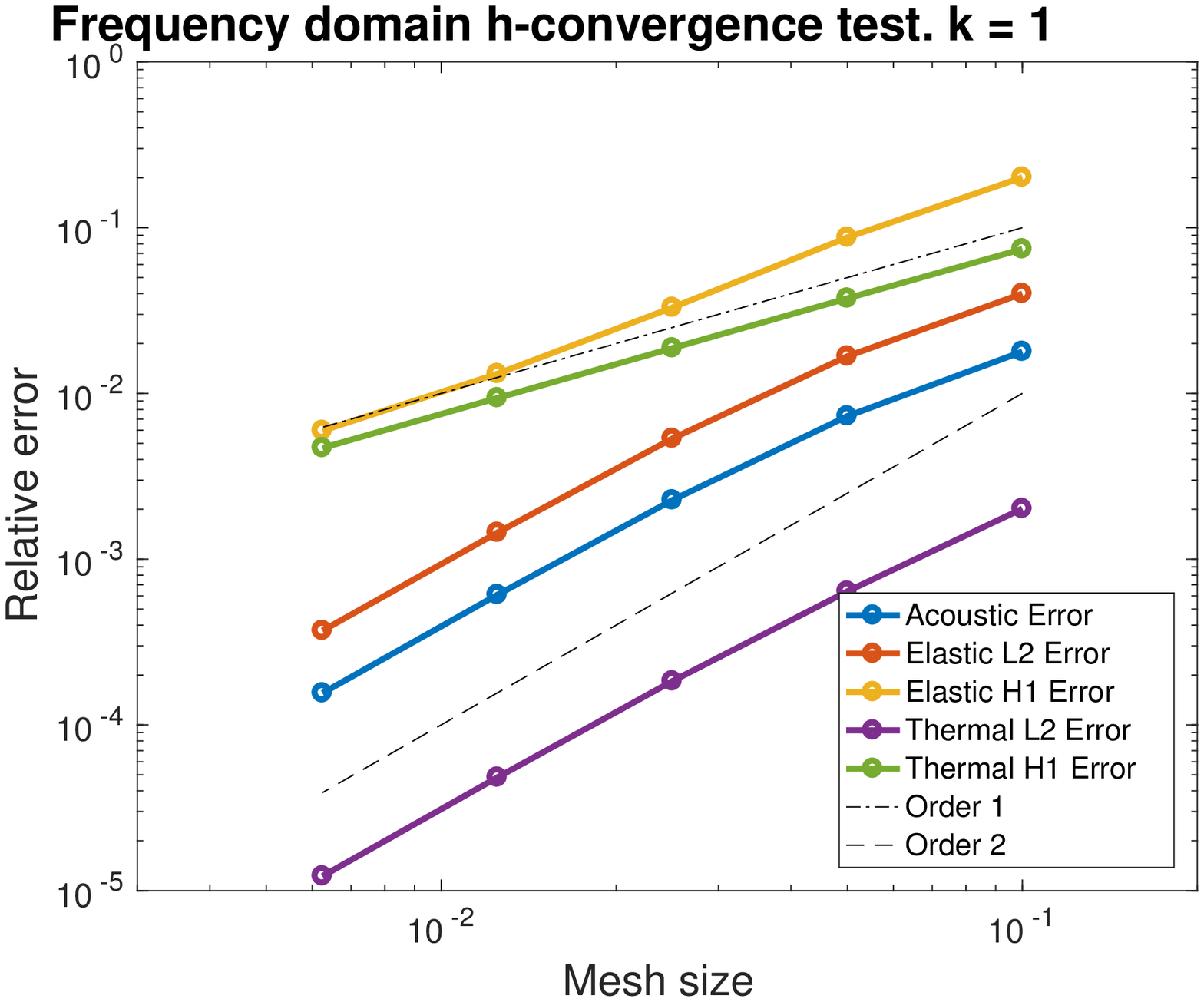}\;
\includegraphics[width = .49\linewidth]{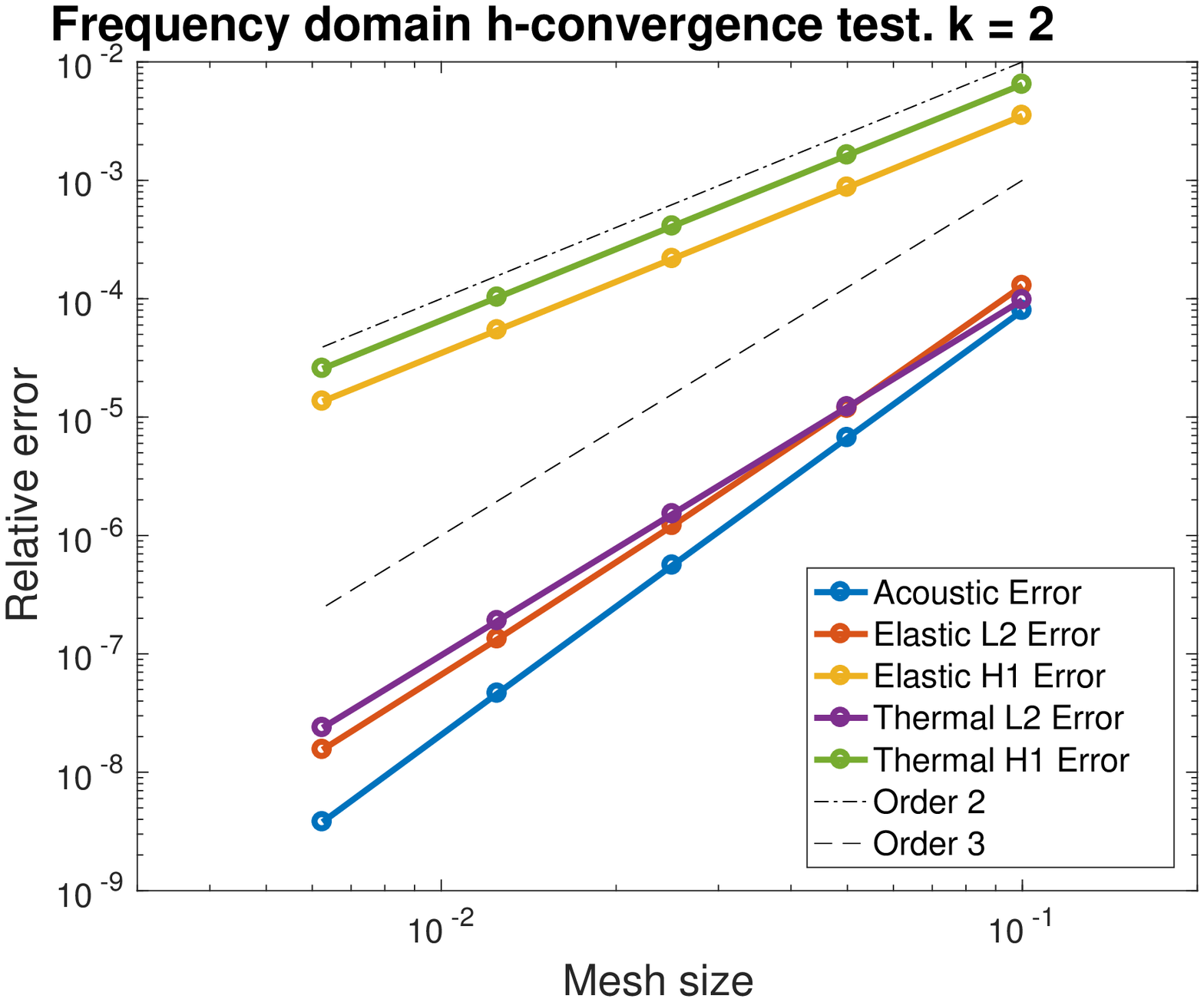}\\
\includegraphics[width = .49\linewidth]{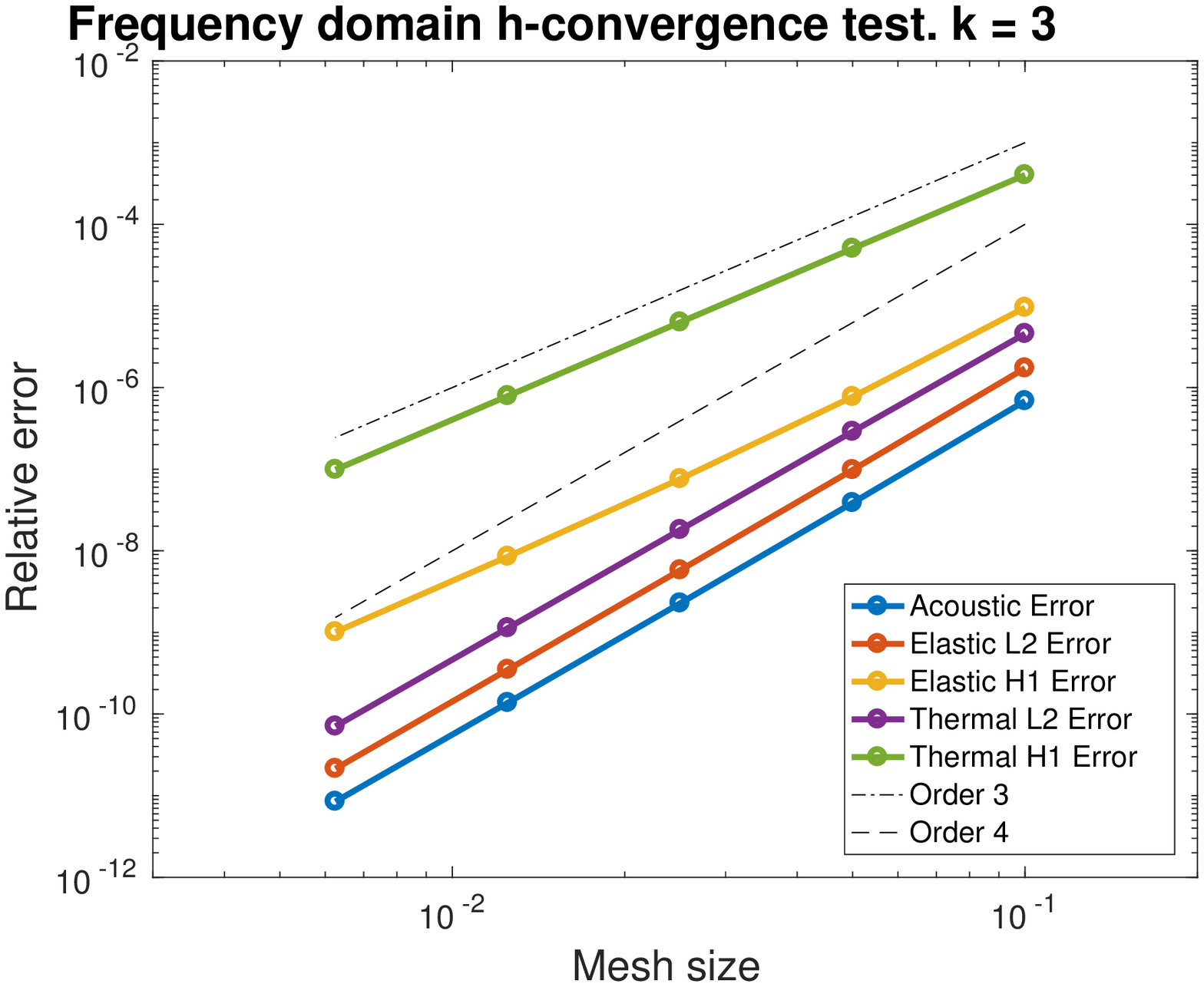}\;
\includegraphics[width = .49\linewidth]{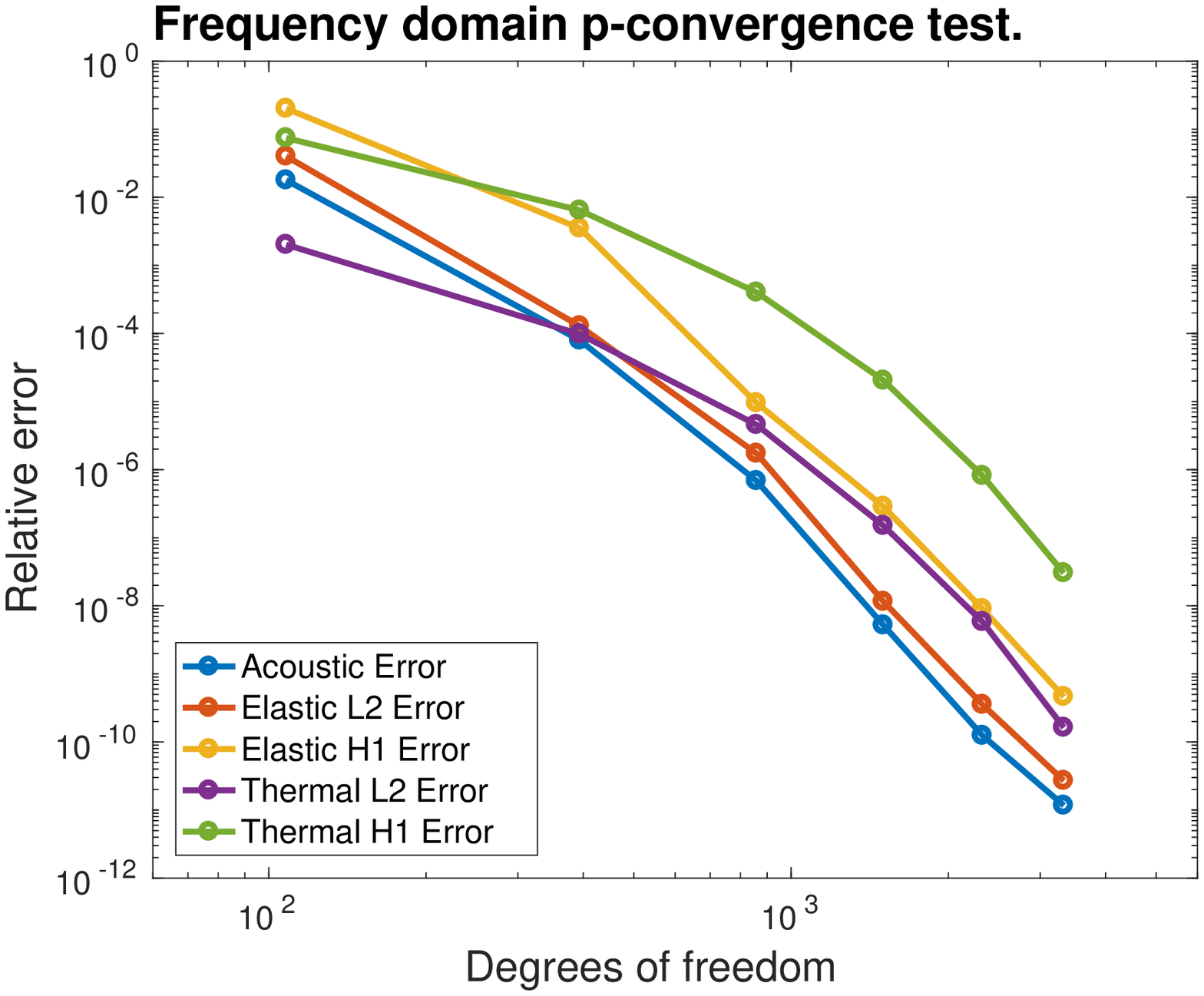}
\caption{{\footnotesize Convergence studies in the frequency domain. Top row and bottom left: Successive mesh refinements were carried out for basis functions with polynomial degrees $k=1,2,$ and $3$. Bottom right: Approximation errors as a function of the degrees of freedom for basis functions of increasing order over a fixed mesh with parameter $h=0.1$. For the color code we refer the reader to the electronic version of the manuscript.}}\label{fig:c5:3}
}
\end{figure}
\clearpage
\begin{table}\centering
\scalebox{0.74}{\!
\begin{tabular}{ccccccccccccc}
\hline
\multicolumn{3}{|c|}{BDF2. k=1} & \multicolumn{4}{|c|}{$L^2(\Omega_-)$} & \multicolumn{4}{|c|}{$H^1(\Omega_-)$} \\
\hline
\multicolumn{1}{|c|}{$\Delta t$   ($h$)} & \multicolumn{1}{c|}{$E^{v}_{h}$} & \multicolumn{1}{c|}{e.c.r.} & \multicolumn{1}{c|}{$E^{\mathbf u}_{h}$} & \multicolumn{1}{c|}{e.c.r.}   & \multicolumn{1}{c|}{$E^{\theta}_{h,\kappa}$} & \multicolumn{1}{c|}{e.c.r.} & \multicolumn{1}{c|}{$E^{\mathbf u}_{h}$} & \multicolumn{1}{c|}{e.c.r.}   & \multicolumn{1}{c|}{$E^{\theta}_{h}$} & \multicolumn{1}{c|}{e.c.r.} \\ \hline
3.75  E-2 (1.000 E-1) & 4.486 E-3 & --- & 1.217 E-2 & --- & 5.381 E-3 & --- & 2.975 E-1 & --- & 2.221 E-1 & --- \\ \hline  
1.875 E-2 (5.016 E-2)& 1.130 E-3 & 1.989 & 3.337 E-3 & 1.867 & 1.358 E-3 & 1.987 & 1.434 E-1 & 1.053 & 1.121 E-1 & 0.986 \\ \hline 
9.375 E-3 (2.508 E-2)& 1.130 E-3 & 1.989 & 3.337 E-3 & 1.867 & 1.358 E-3 & 1.987 & 1.434 E-1 & 1.053 & 1.121 E-1 & 0.986 \\ \hline 
4.697 E-3 (1.254 E-2) & 7.293 E-5 & 1.990 & 2.214 E-4 & 1.975 & 8.528 E-5 & 1.998 & 3.458 E-2 & 1.017 & 2.814 E-2 & 0.999 \\ \hline 
2.344 E-3 (6.270 E-3)& 1.827 E-5 & 1.997 & 5.569 E-5 & 1.991 & 2.133 E-5 & 1.999 & 1.721 E-2 & 1.006 & 1.407 E-2 & 1.000 \\ \hline
\end{tabular}
}
\caption{{\footnotesize Time domain convergence results for BDF2-based CQ with combined $h$ and $\Delta t$ refinements. In every successive refinement level the size of the time step and the mesh parameter were halved. The table shows the relative errors and estimated convergence rates measured for a final time $t=1.5$ and polynomial degree $k=1$.}}\label{tab:BDF2k1}
\end{table}
\begin{table}\centering
\scalebox{0.74}{\!
\begin{tabular}{ccccccccccccc}
\hline
\multicolumn{3}{|c|}{BDF2. k=2} & \multicolumn{4}{|c|}{$L^2(\Omega_-)$} & \multicolumn{4}{|c|}{$H^1(\Omega_-)$} \\
\hline
\multicolumn{1}{|c|}{$\Delta t$   ($h$)} & \multicolumn{1}{c|}{$E^{v}_{h}$} & \multicolumn{1}{c|}{e.c.r.} & \multicolumn{1}{c|}{$E^{\mathbf u}_{h}$} & \multicolumn{1}{c|}{e.c.r.}   & \multicolumn{1}{c|}{$E^{\theta}_{h,\kappa}$} & \multicolumn{1}{c|}{e.c.r.} & \multicolumn{1}{c|}{$E^{\mathbf u}_{h}$} & \multicolumn{1}{c|}{e.c.r.}   & \multicolumn{1}{c|}{$E^{\theta}_{h}$} & \multicolumn{1}{c|}{e.c.r.} \\ \hline
3.75  E-2 (1.000 E-1) & 3.624 E-3 & --- & 7.798 E-4 & --- & 2.917 E-4 & --- & 1.297 E-2 & --- & 1.934 E-2 & --- \\ \hline  
1.875 E-2 (5.016 E-2) & 6.471 E-4 & 2.486 & 2.004 E-4 & 1.960 & 3.640 E-5 & 3.002 & 3.370 E-3 & 1.944 & 4.890 E-3 & 1.983 \\ \hline
9.375 E-3 (2.508 E-2) & 1.571 E-4 & 2.043 & 5.062 E-5 & 1.985 & 4.550 E-6 & 3.000 & 8.524 E-4 & 1.983 & 1.228 E-3 & 1.993 \\ \hline  
4.697 E-3 (1.254 E-2)& 3.891 E-5 & 2.013 & 1.271 E-5 & 1.994 & 5.692 E-7 & 2.999 & 2.137 E-4 & 1.996 & 3.076 E-4 & 1.997 \\ \hline
2.344 E-3 (6.270 E-3) & 9.701 E-6 & 2.004 & 3.182 E-6 & 1.998 & 7.119 E-8 & 2.999 & 5.347 E-5 & 1.999 & 7.697 E-5 & 1.999 \\ \hline
\end{tabular}
}
\caption{{\footnotesize Time domain convergence results for BDF2-based CQ with combined $h$ and $\Delta t$ refinements. In every successive refinement level the size of the time step and the mesh parameter were halved. The table shows the relative errors and estimated convergence rates measured for a final time $t=1.5$ and polynomial degree $k=2$.}}\label{tab:BDF2k2}
\end{table}
\begin{table}\centering
\scalebox{0.74}{\!
\begin{tabular}{ccccccccccccc}
\hline
\multicolumn{3}{|c|}{BDF2. k=3} & \multicolumn{4}{|c|}{$L^2(\Omega_-)$} & \multicolumn{4}{|c|}{$H^1(\Omega_-)$} \\
\hline
\multicolumn{1}{|c|}{$\Delta t$   ($h$)} & \multicolumn{1}{c|}{$E^{v}_{h}$} & \multicolumn{1}{c|}{e.c.r.} & \multicolumn{1}{c|}{$E^{\mathbf u}_{h}$} & \multicolumn{1}{c|}{e.c.r.}   & \multicolumn{1}{c|}{$E^{\theta}_{h,\kappa}$} & \multicolumn{1}{c|}{e.c.r.} & \multicolumn{1}{c|}{$E^{\mathbf u}_{h}$} & \multicolumn{1}{c|}{e.c.r.}   & \multicolumn{1}{c|}{$E^{\theta}_{h}$} & \multicolumn{1}{c|}{e.c.r.} \\ \hline
3.75  E-2 (1.000 E-1)& 3.631 E-3 & --- & 7.616 E-4 & --- & 1.368 E-5 & --- & 7.737 E-3 & --- & 1.205 E-3 & --- \\ \hline
1.875 E-2 (5.016 E-2)& 6.480 E-4 & 2.486 & 1.995 E-4 & 1.933 & 8.649 E-7 & 3.983 & 2.140 E-3 & 1.854 & 1.513 E-4 & 2.994 \\ \hline  
9.375 E-3 (2.508 E-2)& 1.571 E-4 & 2.044 & 5.059 E-5 & 1.980 & 5.423 E-8 & 3.995 & 5.506 E-4 & 1.959 & 1.894 E-5 & 2.998 \\ \hline
4.697 E-3 (1.254 E-2)& 3.892 E-5 & 2.013 & 1.270 E-5 & 1.993 & 3.392 E-9 & 3.999 & 1.386 E-4 & 1.990 & 2.368 E-6 & 3.000 \\ \hline 
\end{tabular}
}
\caption{{\footnotesize Time domain convergence results for BDF2-based CQ with combined $h$ and $\Delta t$ refinements. In every successive refinement level the size of the time step and the mesh parameter were halved. The table shows the relative errors and estimated convergence rates measured for a final time $t=1.5$ and polynomial degree $k=3$.}}\label{tab:BDF2k3}
\end{table}
\begin{table}\centering
\scalebox{0.74}{\!
\begin{tabular}{ccccccccccccc}
\hline
\multicolumn{3}{|c|}{BDF2} & \multicolumn{4}{|c|}{$L^2(\Omega_-)$} & \multicolumn{4}{|c|}{$H^1(\Omega_-)$} \\
\hline
\multicolumn{1}{|c|}{$\Delta t$ (Ndof)} & \multicolumn{1}{c|}{$E^{v}_{h}$} & \multicolumn{1}{c|}{e.c.r.} & \multicolumn{1}{c|}{$E^{\mathbf u}_{h}$} & \multicolumn{1}{c|}{e.c.r.}   & \multicolumn{1}{c|}{$E^{\theta}_{h,\kappa}$} & \multicolumn{1}{c|}{e.c.r.} & \multicolumn{1}{c|}{$E^{\mathbf u}_{h}$} & \multicolumn{1}{c|}{e.c.r.}   & \multicolumn{1}{c|}{$E^{\theta}_{h}$} & \multicolumn{1}{c|}{e.c.r.} \\ \hline
 3.75 E-2 (108)  & 7.793 E-3  & ---   & 1.231 E-2  & ---   & 5.184 E-3  & ---   & 2.975 E-1 & ---   & 2.222 E-1 & ---    \\ \hline
1.875 E-2  (394)  & 2.775 E-3  & 1.489 & 7.725 E-4  & 3.994 & 3.275 E-4  & 3.984 & 1.258 E-2 & 4.563 & 1.940 E-2 & 3.518  \\ \hline
9.375 E-3  (859)  & 7.955 E-4  & 1.803 & 1.980 E-4  & 1.964 & 4.061 E-5  & 3.012 & 1.916 E-3 & 2.715 & 1.265 E-3 & 3.938  \\ \hline 
4.687 E-3  (1503) & 2.072 E-4  & 1.941 & 5.035 E-5  & 1.975 & 9.408 E-6  & 2.110 & 4.905 E-4 & 1.966 & 1.125 E-4 & 3.489  \\ \hline
2.344 E-3  (2326) & 5.258 E-5  & 1.978 & 1.267 E-5  & 1.991 & 2.329 E-6  & 2.014 & 1.236 E-4 & 1.988 & 2.355 E-5 & 2.259  \\ \hline 
1.172 E-3  (3328) & 1.323 E-5  & 1.991 & 3.175 E-6  & 1.996 & 5.795 E-7  & 2.007 & 3.100 E-5 & 1.995 & 5.825 E-6 & 2.015  \\ \hline 
\end{tabular}
}
\caption{{\footnotesize Time domain convergence results for BDF2-based CQ. The experiments were run with a fixed mesh using $\mathcal P_k$ Lagrangian finite elements and $\mathcal P_k/\mathcal P_{k-1}$ boundary elements. In every successive refinement level the size of the time step was halved and the polynomial degree of the space refinement increased by one. The table shows the relative errors and estimated convergence rates measured for a final time $t=1.5$ as a function of the time step $\Delta t$ and the number of degrees of freedom used in the spatial discretization.}}\label{tab:BDF2pref}
\end{table}
\begin{figure}{\centering
\includegraphics[width=.49\linewidth]{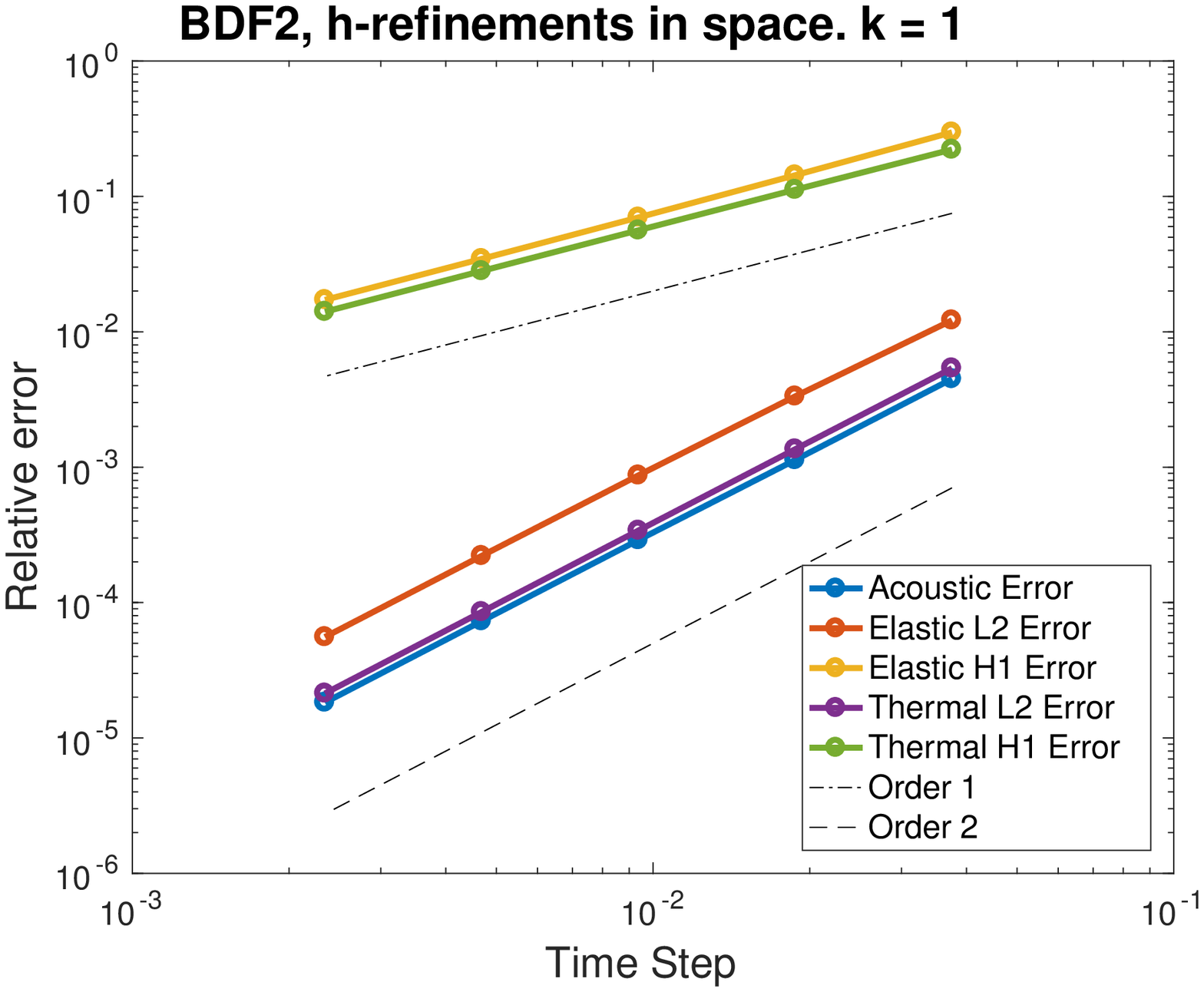}\;
\includegraphics[width=.49\linewidth]{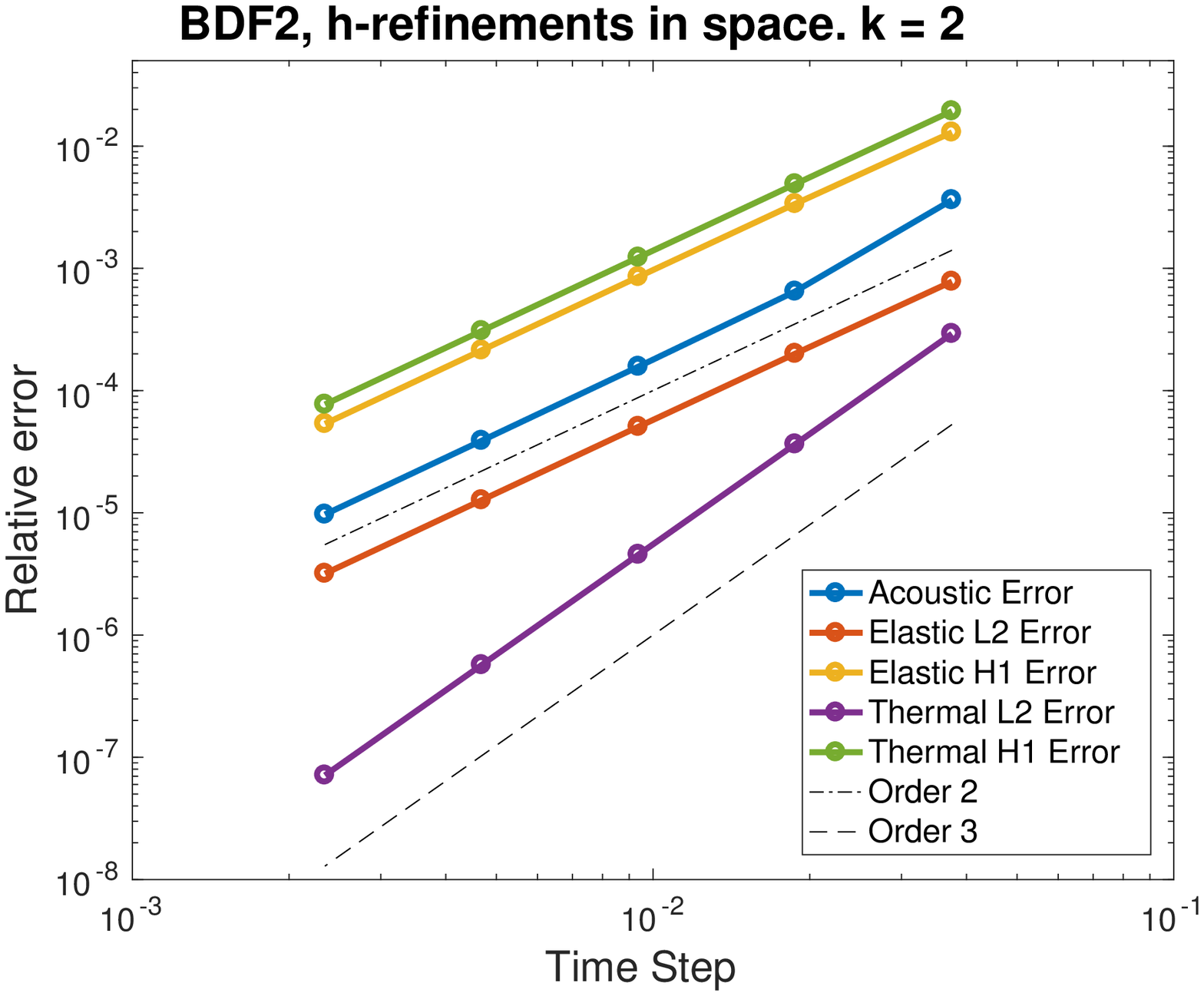} \\
\includegraphics[width=.49\linewidth]{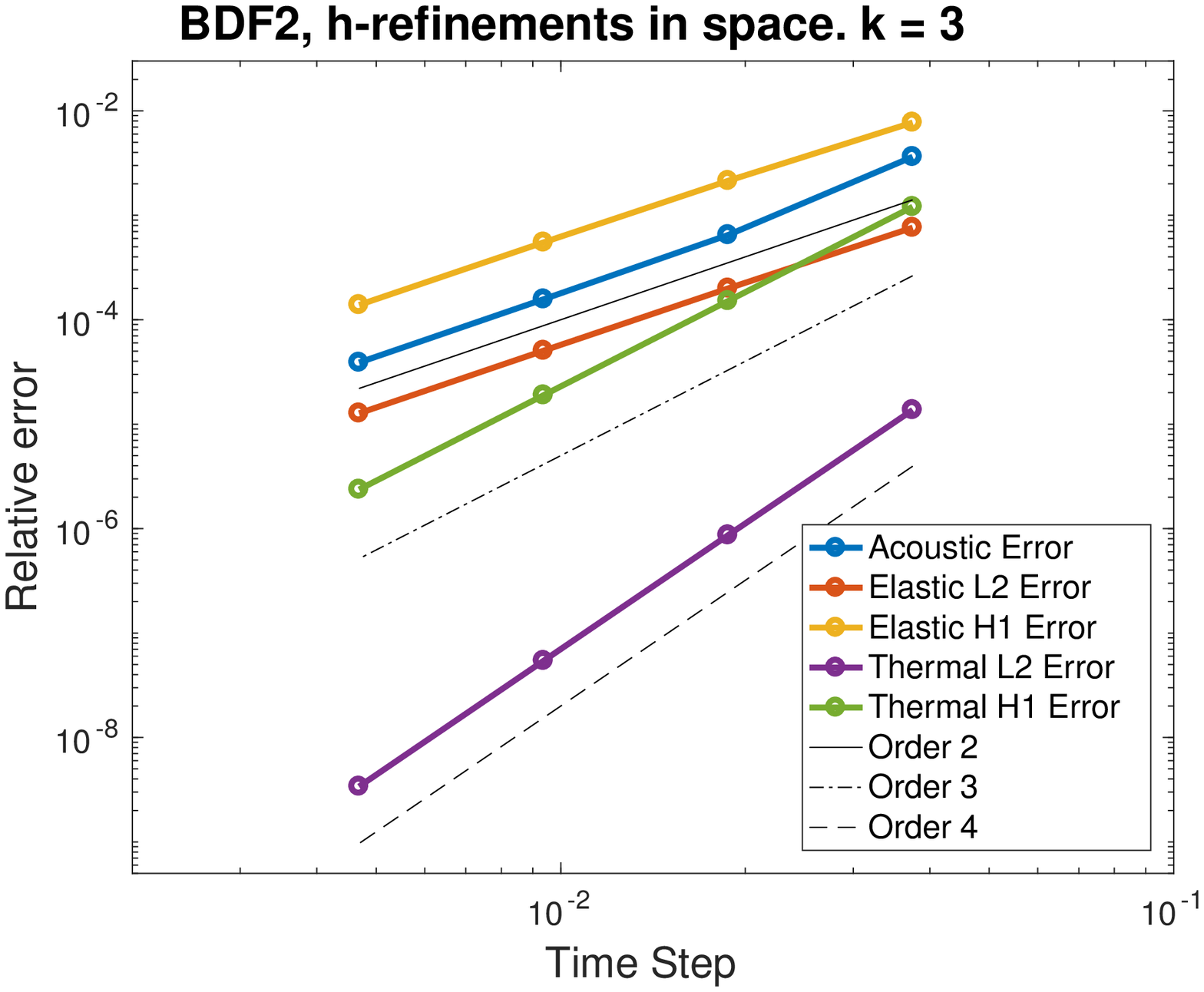}\;
\includegraphics[width=.49\linewidth]{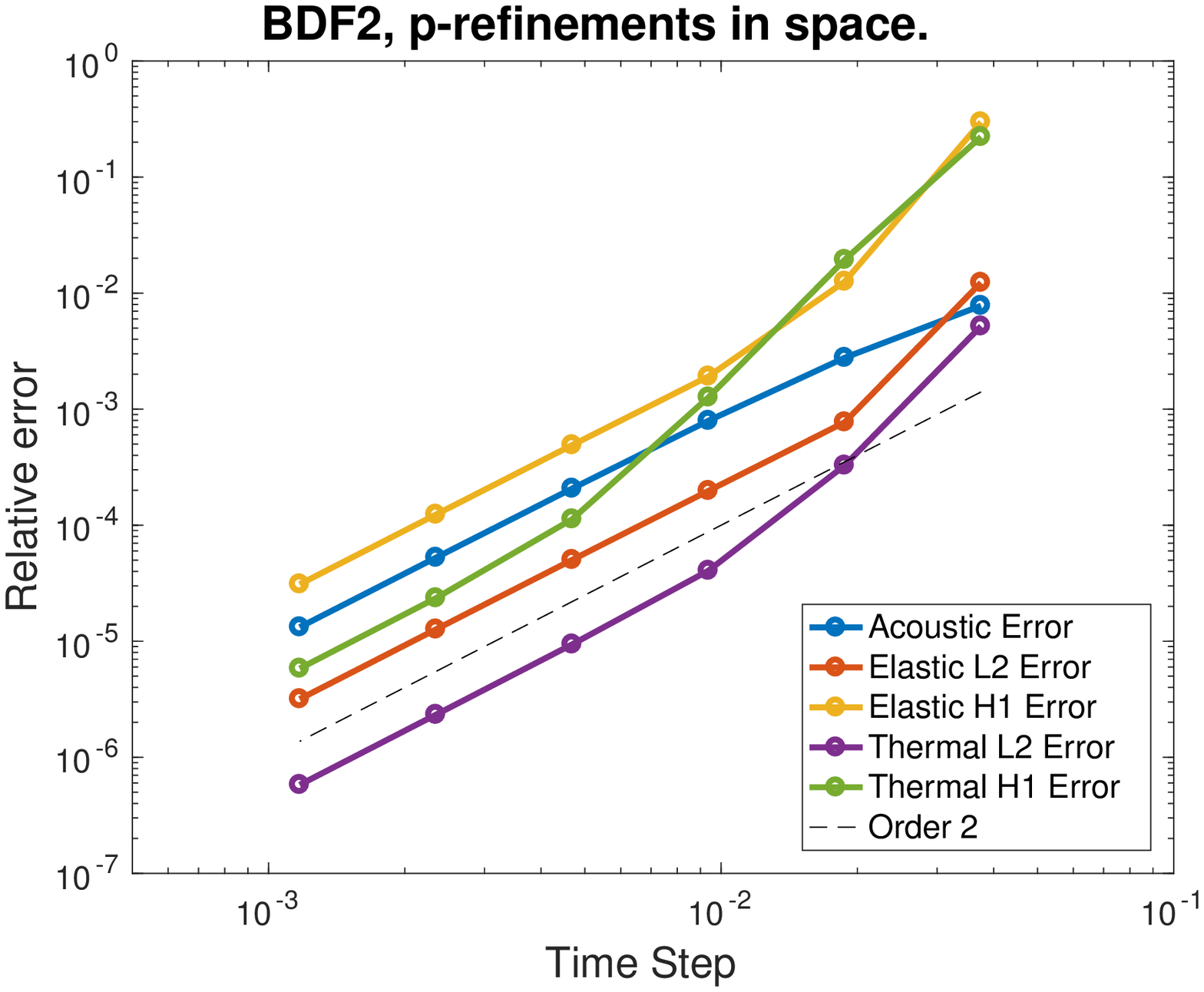}
\caption{{\footnotesize Time-domain convergence studies for the BDF2-based time stepping scheme. For the color code we refer the reader to the electronic version of the manuscript. }}\label{fig:BDF2convergence}
}
\end{figure} 
\clearpage
\begin{table}\centering
\scalebox{0.74}{\!
\begin{tabular}{ccccccccccccc}
\hline
\multicolumn{3}{|c|}{Trapezoidal Rule. k=1} & \multicolumn{4}{|c|}{$L^2(\Omega_-)$} & \multicolumn{4}{|c|}{$H^1(\Omega_-)$} \\
\hline
\multicolumn{1}{|c|}{$\Delta t$   ($h$)} & \multicolumn{1}{c|}{$E^{v}_{h}$} & \multicolumn{1}{c|}{e.c.r.} & \multicolumn{1}{c|}{$E^{\mathbf u}_{h}$} & \multicolumn{1}{c|}{e.c.r.}   & \multicolumn{1}{c|}{$E^{\theta}_{h,\kappa}$} & \multicolumn{1}{c|}{e.c.r.} & \multicolumn{1}{c|}{$E^{\mathbf u}_{h}$} & \multicolumn{1}{c|}{e.c.r.}   & \multicolumn{1}{c|}{$E^{\theta}_{h}$} & \multicolumn{1}{c|}{e.c.r.} \\ \hline
3.75  E-2 (1.000 E-1) & 6.497 E-3 & --- & 1.222 E-2 & --- & 5.381 E-3 & --- & 2.977 E-1 & --- & 2.221 E-1 & --- \\ \hline
1.875 E-2 (5.016 E-2)& 1.648 E-3 & 1.979 & 3.347 E-3 & 1.868 & 1.358 E-3 & 1.987 & 1.434 E-1 & 1.054 & 1.121 E-1 & 0.986 \\ \hline
9.375 E-3 (2.508 E-2)& 4.145 E-4 & 1.991 & 8.731 E-4 & 1.939 & 3.407 E-4 & 1.995 & 6.996 E-2 & 1.036 & 5.624 E-2 & 0.996 \\ \hline
4.697 E-3 (1.254 E-2)& 1.038 E-4 & 1.997 & 2.220 E-4 & 1.975 & 8.528 E-5 & 1.998 & 3.458 E-2 & 1.017 & 2.814 E-2 & 0.999 \\ \hline
2.344 E-3 (6.270 E-3)& 2.596 E-5 & 1.999 & 5.585 E-5 & 1.991 & 2.133 E-5 & 1.999 & 1.721 E-2 & 1.006 & 1.407 E-2 & 1.000 \\ \hline
\end{tabular}
}
\caption{{\footnotesize Time domain convergence results for Trapezoidal Rule-based CQ with combined $h$ and $\Delta t$ refinements. In every successive refinement level the size of the time step and the mesh parameter were halved. The table shows the relative errors and estimated convergence rates measured for a final time $t=1.5$ and polynomial degree $k=1$.}}\label{tab:TRk1}
\end{table}
\begin{table}\centering
\scalebox{0.74}{\!
\begin{tabular}{ccccccccccccc}
\hline
\multicolumn{3}{|c|}{Trapezoidal Rule. k=2} & \multicolumn{4}{|c|}{$L^2(\Omega_-)$} & \multicolumn{4}{|c|}{$H^1(\Omega_-)$} \\
\hline
\multicolumn{1}{|c|}{$\Delta t$   ($h$)} & \multicolumn{1}{c|}{$E^{v}_{h}$} & \multicolumn{1}{c|}{e.c.r.} & \multicolumn{1}{c|}{$E^{\mathbf u}_{h}$} & \multicolumn{1}{c|}{e.c.r.}   & \multicolumn{1}{c|}{$E^{\theta}_{h,\kappa}$} & \multicolumn{1}{c|}{e.c.r.} & \multicolumn{1}{c|}{$E^{\mathbf u}_{h}$} & \multicolumn{1}{c|}{e.c.r.}   & \multicolumn{1}{c|}{$E^{\theta}_{h}$} & \multicolumn{1}{c|}{e.c.r.} \\ \hline
3.75  E-2 (1.000 E-1) & 6.150 E-4 &--- & 2.709 E-4 & --- & 2.917 E-4 & --- & 1.069E-2 & --- & 1.934E-2 & --- \\ \hline
1.875 E-2 (5.016 E-2) & 1.594 E-4 & 1.948 & 5.531 E-5 & 2.292 & 3.640 E-5 & 3.002 & 2.668 E-3 & 2.002 & 4.890 E-3 & 1.983 \\ \hline 
9.375 E-3 (2.508 E-2)& 4.114 E-5 & 1.954 & 1.301 E-5 & 2.088 & 4.550 E-6 & 3.000 & 6.662 E-4 & 2.002 & 1.228 E-3 & 1.993 \\ \hline  
4.697 E-3 (1.254 E-2)& 1.036 E-5 & 1.989 & 3.202 E-6 & 2.022 & 5.692 E-7 & 2.999 & 1.664 E-4 & 2.001 & 3.076 E-4 & 1.997 \\ \hline
2.344 E-3 (6.270 E-3) & 2.596 E-6 & 1.997 & 7.974 E-7 & 2.005 & 7.119 E-8 & 2.999 & 4.159 E-5 & 2.001 & 7.697 E-5 & 1.999 \\ \hline
\end{tabular}
}
\caption{{\footnotesize Time domain convergence results for Trapezoidal Rule-based CQ with combined $h$ and $\Delta t$ refinements. In every successive refinement level the size of the time step and the mesh parameter were halved. The table shows the relative errors and estimated convergence rates measured for a final time $t=1.5$ and polynomial degree $k=2$.}}\label{tab:TRk2}
\end{table}
\begin{table}\centering
\scalebox{0.74}{\!
\begin{tabular}{ccccccccccccc}
\hline
\multicolumn{3}{|c|}{Trapezoidal Rule. k=3} & \multicolumn{4}{|c|}{$L^2(\Omega_-)$} & \multicolumn{4}{|c|}{$H^1(\Omega_-)$} \\
\hline
\multicolumn{1}{|c|}{$\Delta t$   ($h$)} & \multicolumn{1}{c|}{$E^{v}_{h}$} & \multicolumn{1}{c|}{e.c.r.} & \multicolumn{1}{c|}{$E^{\mathbf u}_{h}$} & \multicolumn{1}{c|}{e.c.r.}   & \multicolumn{1}{c|}{$E^{\theta}_{h,\kappa}$} & \multicolumn{1}{c|}{e.c.r.} & \multicolumn{1}{c|}{$E^{\mathbf u}_{h}$} & \multicolumn{1}{c|}{e.c.r.}   & \multicolumn{1}{c|}{$E^{\theta}_{h}$} & \multicolumn{1}{c|}{e.c.r.} \\ \hline
3.75  E-2 (1.000 E-1)& 6.108 E-4 & --- & 2.027 E-4 & --- & 1.368 E-5 & --- & 2.204 E-3 & --- & 1.205 E-3 & --- \\ \hline
1.875 E-2 (5.016 E-2)& 1.601 E-4 & 1.932 & 5.090 E-5 & 1.994 & 8.650 E-7 & 3.983 & 5.550 E-4 & 1.990 & 1.513 E-4 & 2.994 \\ \hline  
9.375 E-3 (2.508 E-2)& 4.122 E-5 & 1.958 & 1.274 E-5 & 1.998 & 5.424 E-8 & 3.995 & 1.390 E-4 & 1.998 & 1.894 E-5 & 2.998 \\ \hline
4.697 E-3 (1.254 E-2)& 1.037 E-5 & 1.991 & 3.186 E-6 & 2.000 & 3.392 E-9 & 3.999 & 3.475 E-5 & 1.999 & 2.368 E-6 & 3.000 \\ \hline 
\end{tabular}
}
\caption{{\footnotesize Time domain convergence results for Trapezoidal Rule-based CQ with combined $h$ and $\Delta t$ refinements. In every successive refinement level the size of the time step and the mesh parameter were halved. The table shows the relative errors and estimated convergence rates measured for a final time $t=1.5$ and polynomial degree $k=3$.}}\label{tab:TRk3}
\end{table}
\begin{table}\centering
\scalebox{0.74}{\!
\begin{tabular}{ccccccccccccc}
\hline
\multicolumn{3}{|c|}{Trapezoidal Rule} & \multicolumn{4}{|c|}{$L^2(\Omega_-)$} & \multicolumn{4}{|c|}{$H^1(\Omega_-)$} \\
\hline
\multicolumn{1}{|c|}{$\Delta t$ (Ndof)} & \multicolumn{1}{c|}{$E^{v}_{h}$} & \multicolumn{1}{c|}{e.c.r.} & \multicolumn{1}{c|}{$E^{\mathbf u}_{h}$} & \multicolumn{1}{c|}{e.c.r.}   & \multicolumn{1}{c|}{$E^{\theta}_{h,\kappa}$} & \multicolumn{1}{c|}{e.c.r.} & \multicolumn{1}{c|}{$E^{\mathbf u}_{h}$} & \multicolumn{1}{c|}{e.c.r.}   & \multicolumn{1}{c|}{$E^{\theta}_{h}$} & \multicolumn{1}{c|}{e.c.r.} \\ \hline
 3.75 E-2 (108)  & 5.620 E-3  & ---   & 1.218 E-2  & ---   & 5.213 E-3  & ---   & 2.976 E-1 & ---   & 2.221 E-1 & ---    \\ \hline
1.875 E-2 (394)  & 8.283 E-4  & 2.762 & 2.713 E-4  & 5.489 & 2.934 E-4  & 4.151 & 1.064 E-2 & 4.805 & 1.934 E-2 & 3.522  \\ \hline
9.375 E-3 (859)  & 2.107 E-4  & 1.975 & 5.085 E-5  & 2.416 & 1.660 E-5  & 4.144 & 4.958 E-4 & 4.424 & 1.209 E-3 & 4.000  \\ \hline 
4.687 E-3 (1503) & 5.278 E-5  & 1.997 & 1.272 E-5  & 1.999 & 2.349 E-6  & 2.821 & 1.242 E-4 & 1.997 & 6.549 E-5 & 4.206  \\ \hline
2.344 E-3 (2326) & 1.320 E-5  & 1.996 & 3.184 E-6  & 1.999 & 5.770 E-7  & 2.026 & 3.107 E-5 & 1.999 & 6.286 E-6 & 3.381  \\ \hline 
1.172 E-3 (3328) & 3.300 E-6  & 2.000 & 7.956 E-7  & 2.000 & 1.442 E-7  & 2.001 & 7.770 E-6 & 2.000 & 1.451 E-6 & 2.115  \\ \hline  
\end{tabular}
}
\caption{{\footnotesize Time domain convergence results for Trapezoidal Rule-based CQ. The experiments were run with a fixed mesh using $\mathcal P_k$ Lagrangian finite elements and $\mathcal P_k/\mathcal P_{k-1}$ boundary elements. In every successive refinement level the size of the time step was halved and the polynomial degree of the space refinement increased by one. The table shows the relative errors and estimated convergence rates measured for a final time $t=1.5$ as a function of the time step $\Delta t$ and the number of degrees of freedom used in the spatial discretization.}}\label{tab:TRpref}
\end{table}
\begin{figure}{\centering
\includegraphics[width=.49\linewidth]{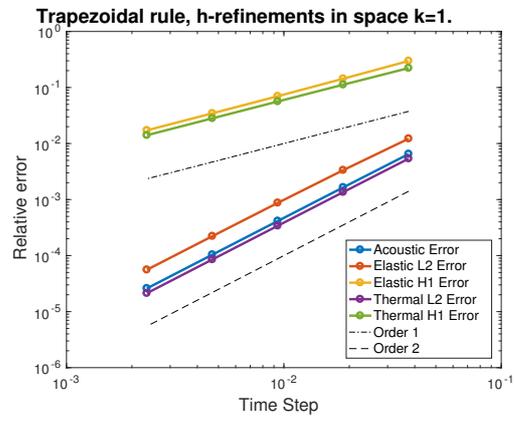}\;
\includegraphics[width=.49\linewidth]{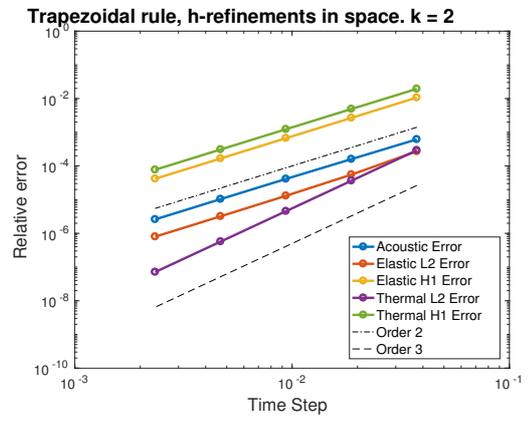} \\
\includegraphics[width=.49\linewidth]{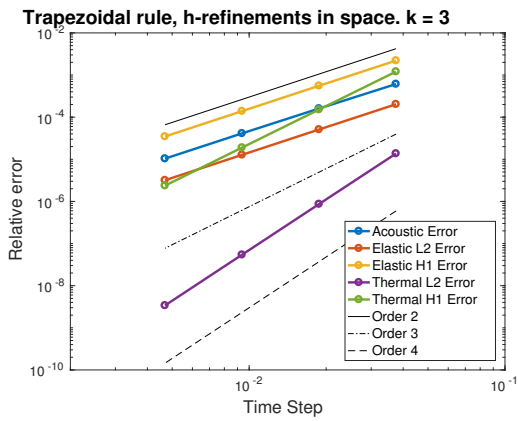}\;
\includegraphics[width=.49\linewidth]{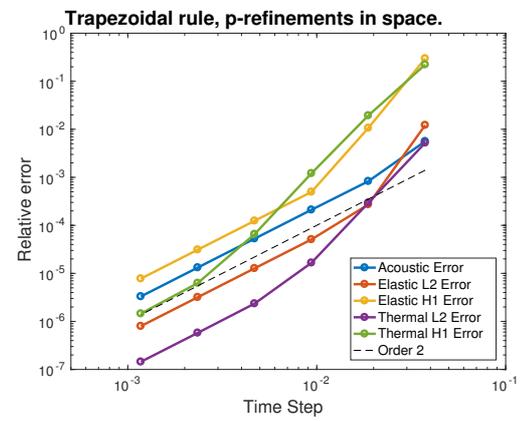}
\caption{{\footnotesize Time-domain convergence studies for the Trapezoidal Rule-based time stepping scheme. For the color code we refer the reader to the electronic version of the manuscript. }}\label{fig:TRconvergence}
}
\end{figure} 
\newpage
\begin{figure}{\centering
\includegraphics[height =7cm]{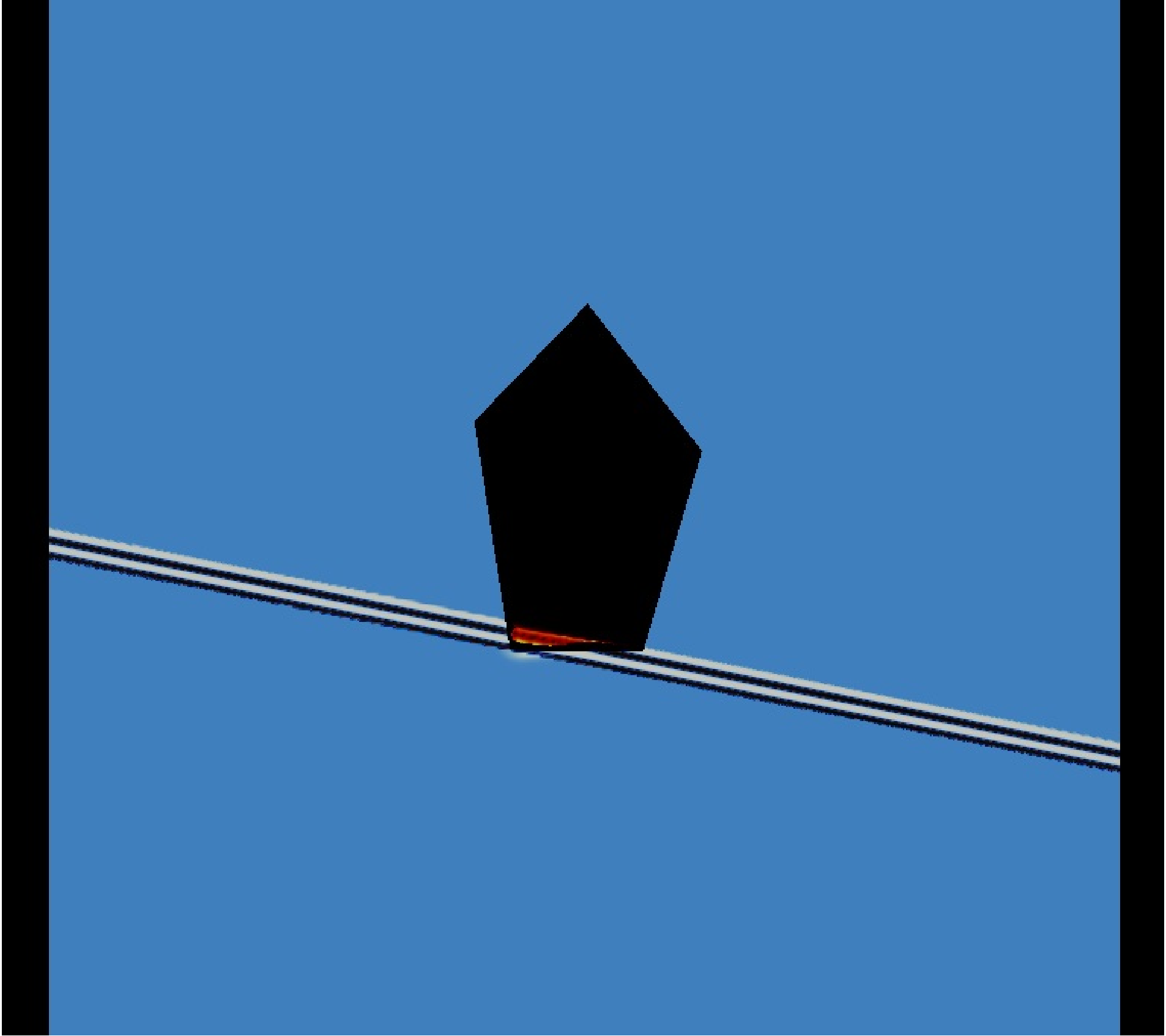}
\includegraphics[height =7cm]{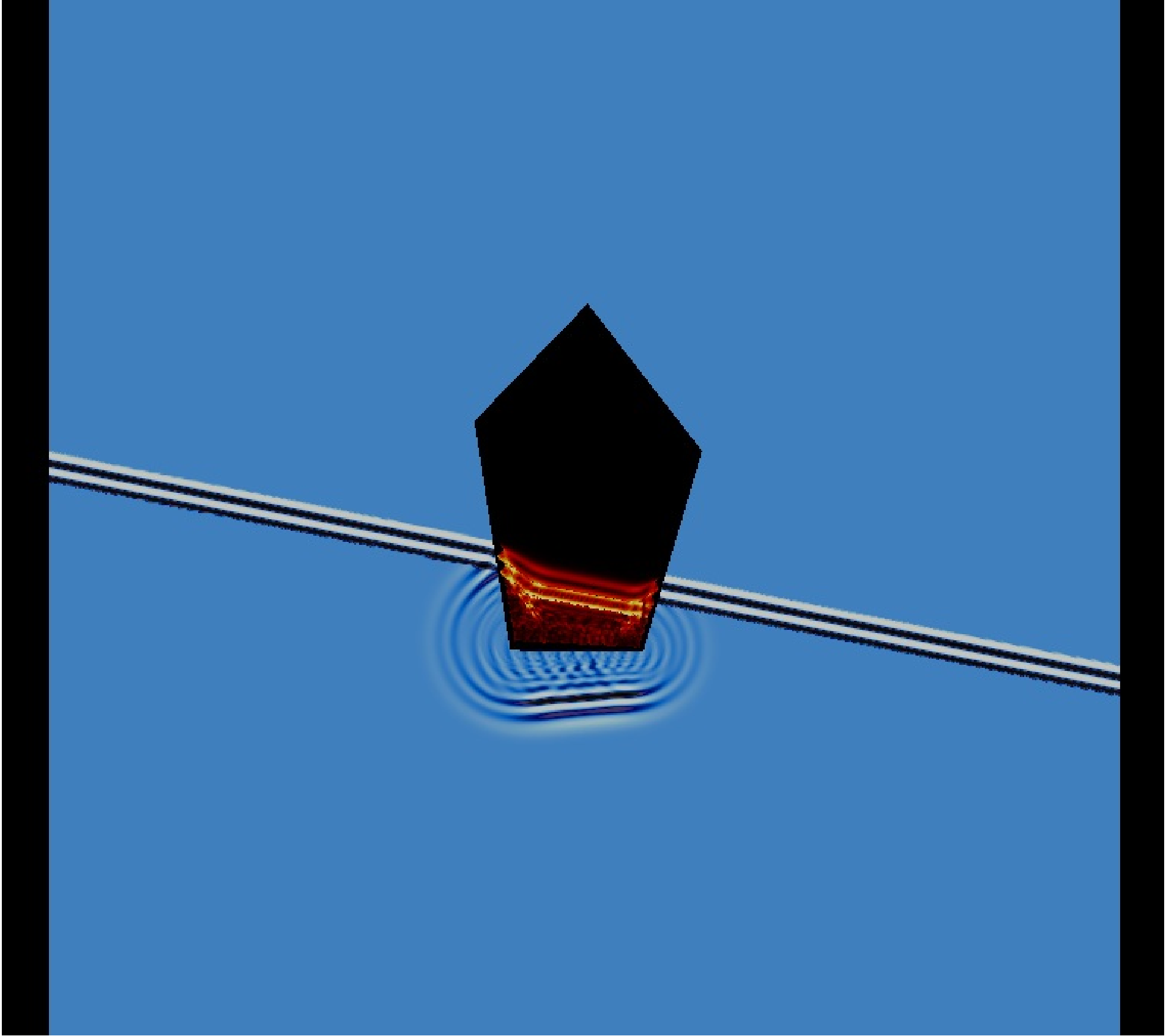}
\includegraphics[height =7cm]{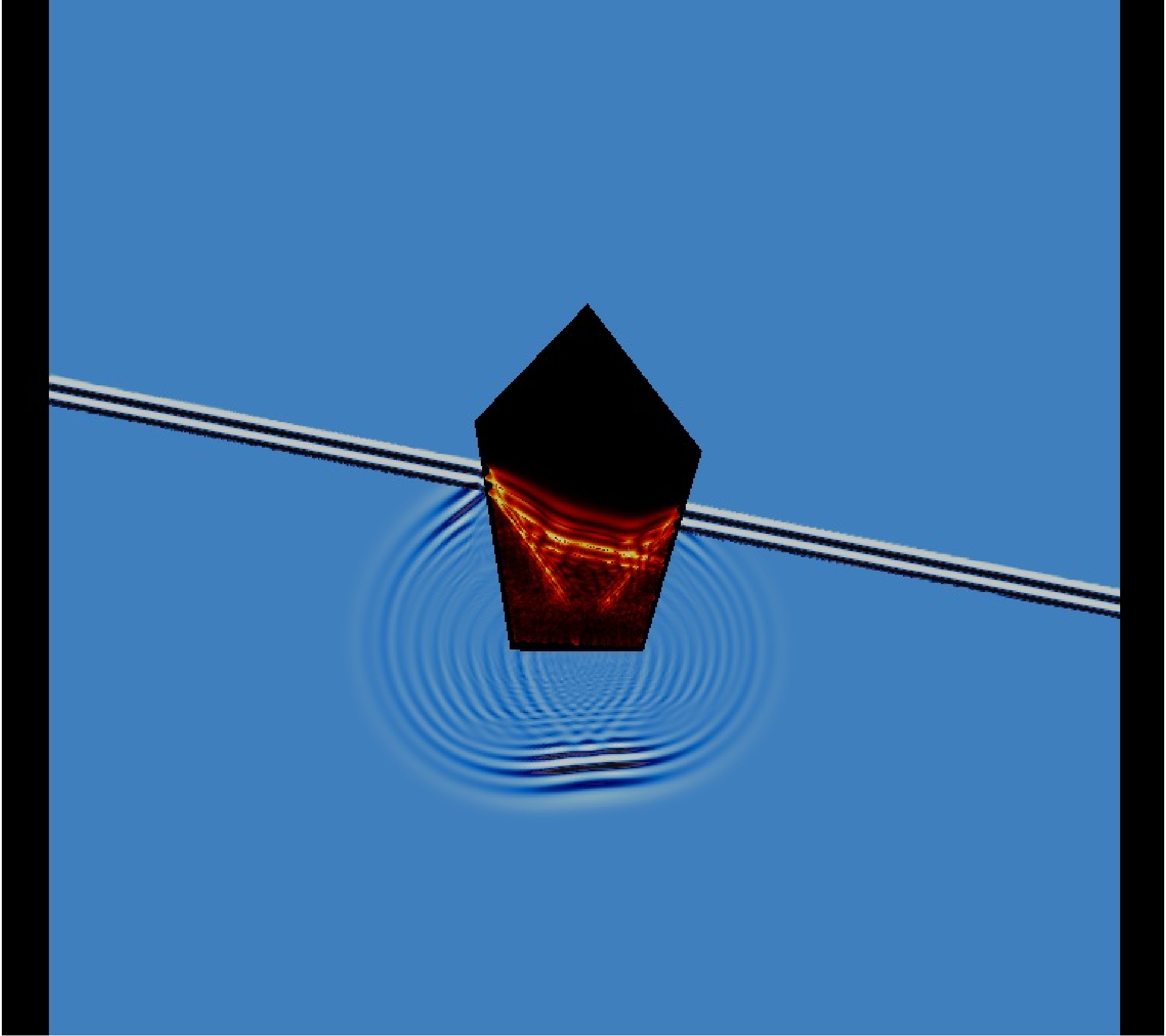}
\includegraphics[height =7cm]{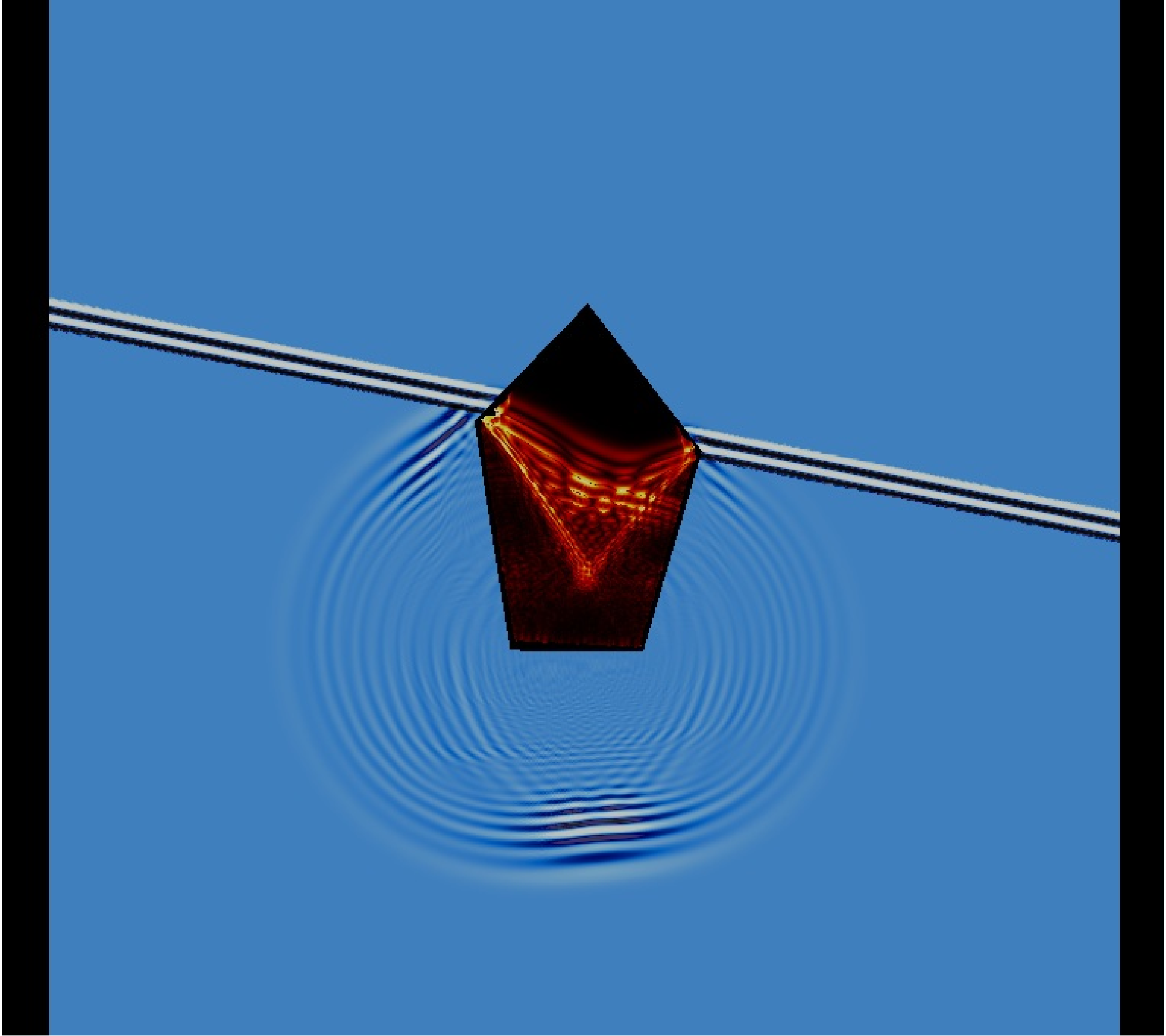}
\includegraphics[height =7cm]{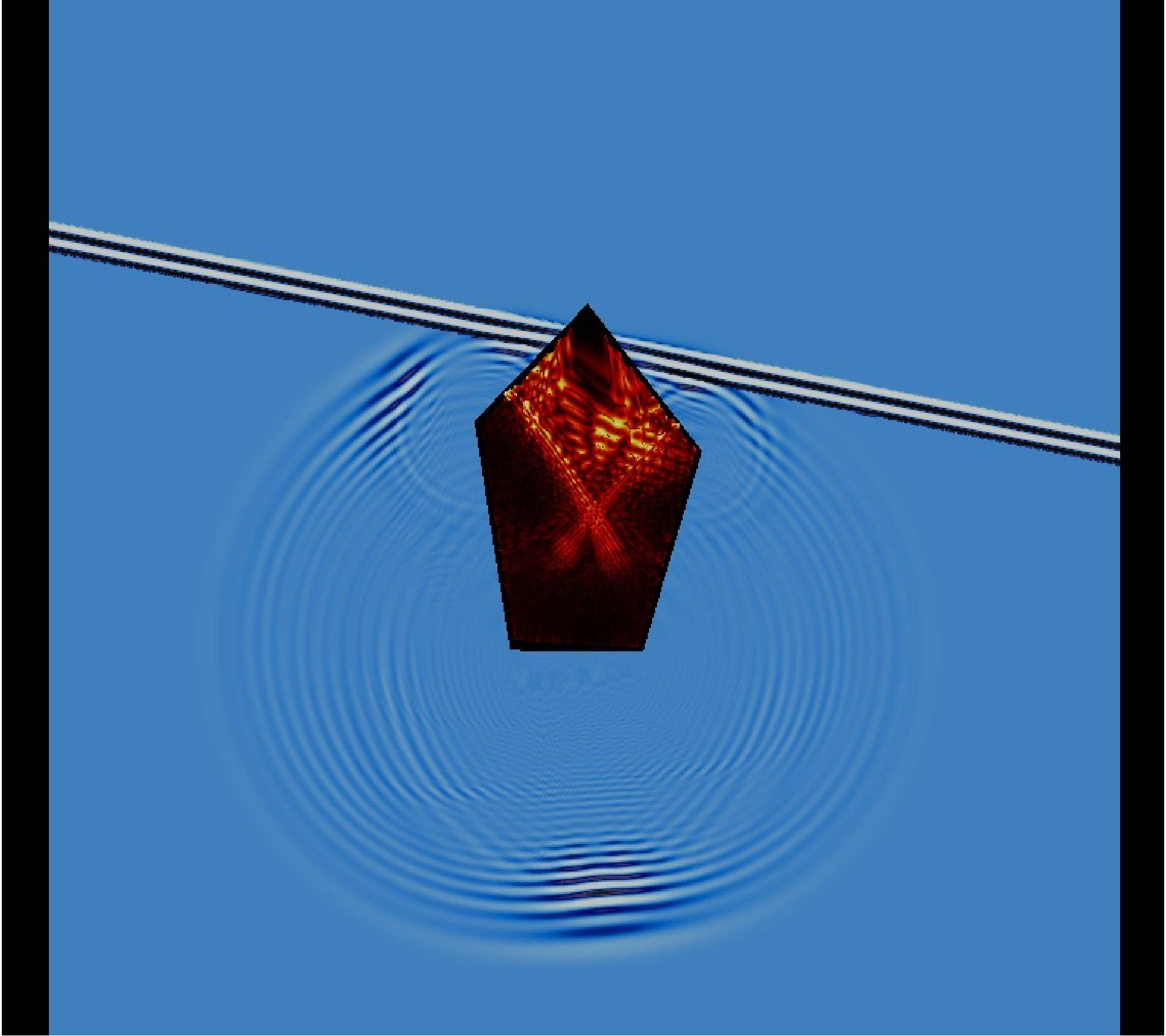}
\includegraphics[height =7cm]{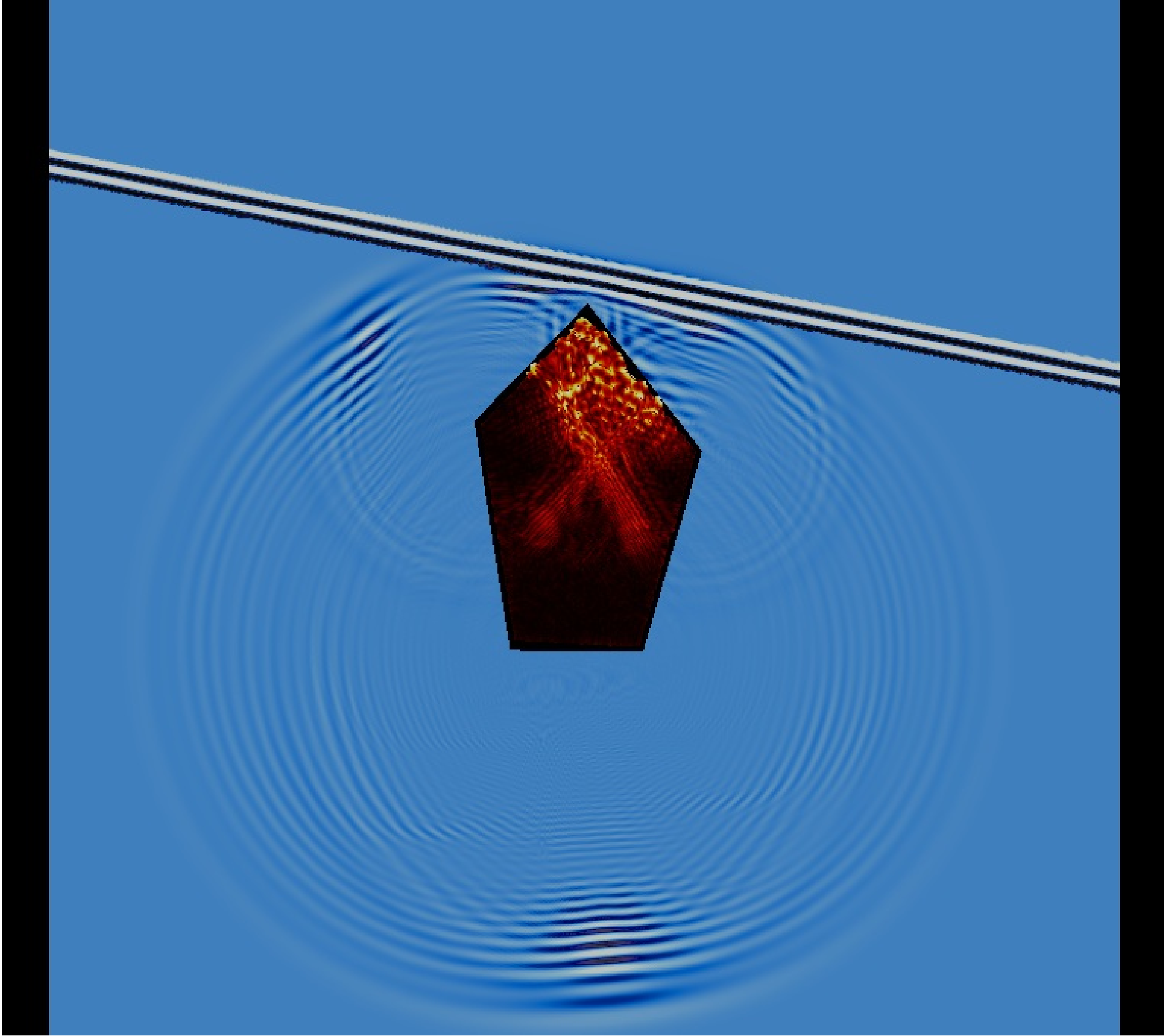}
\caption[Thermoelastic scattering by a pentagon]{{\footnotesize Snapshots of the total acoustic field at times $t=0.25,0.6, 0.95,1.3,1.65,2$. The interior domain shows the norm of the elastic displacement.}}\label{fig:c5:5}
}
\end{figure}

\begin{figure}{\centering
\includegraphics[height =5cm]{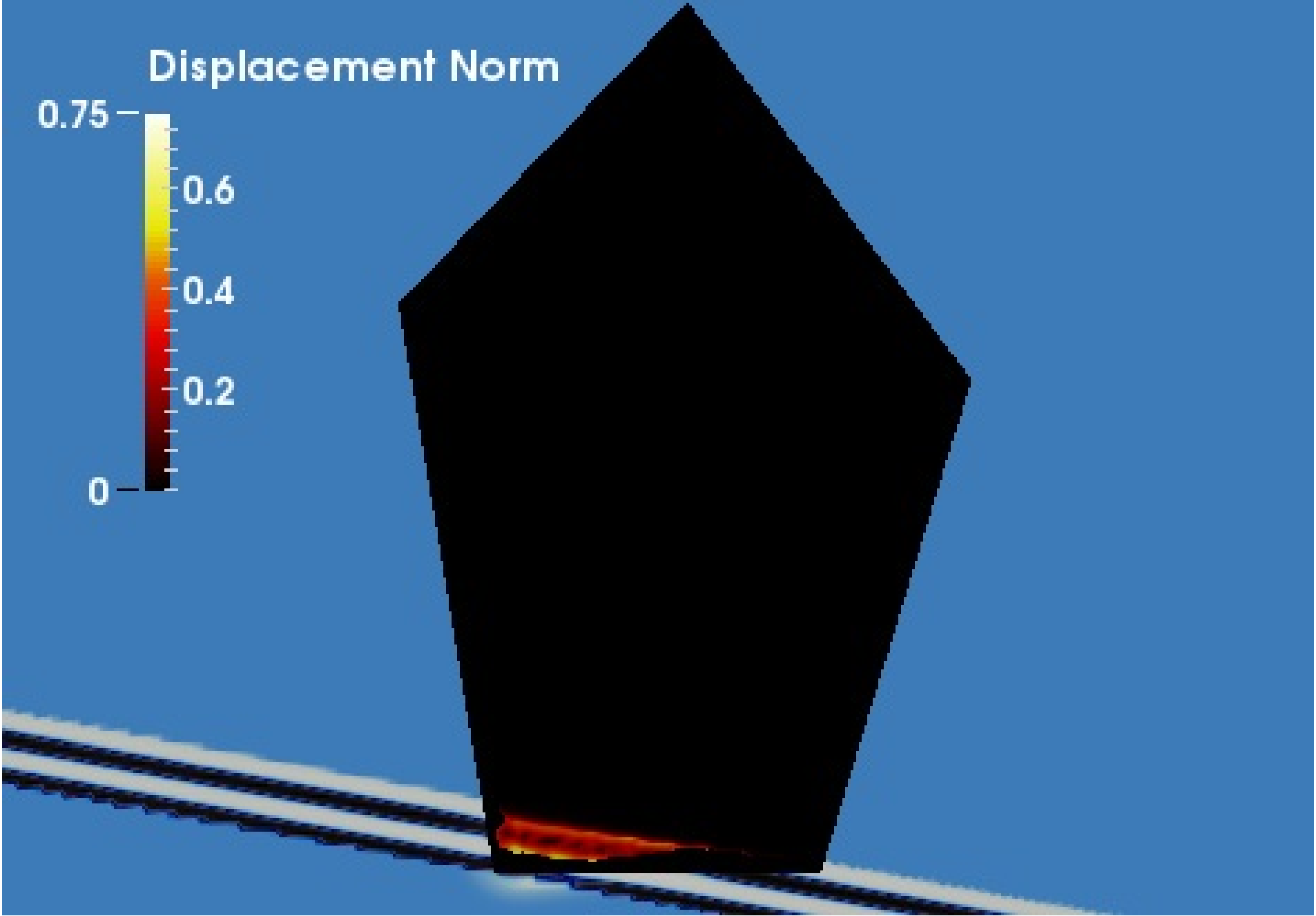}
\includegraphics[height =5cm]{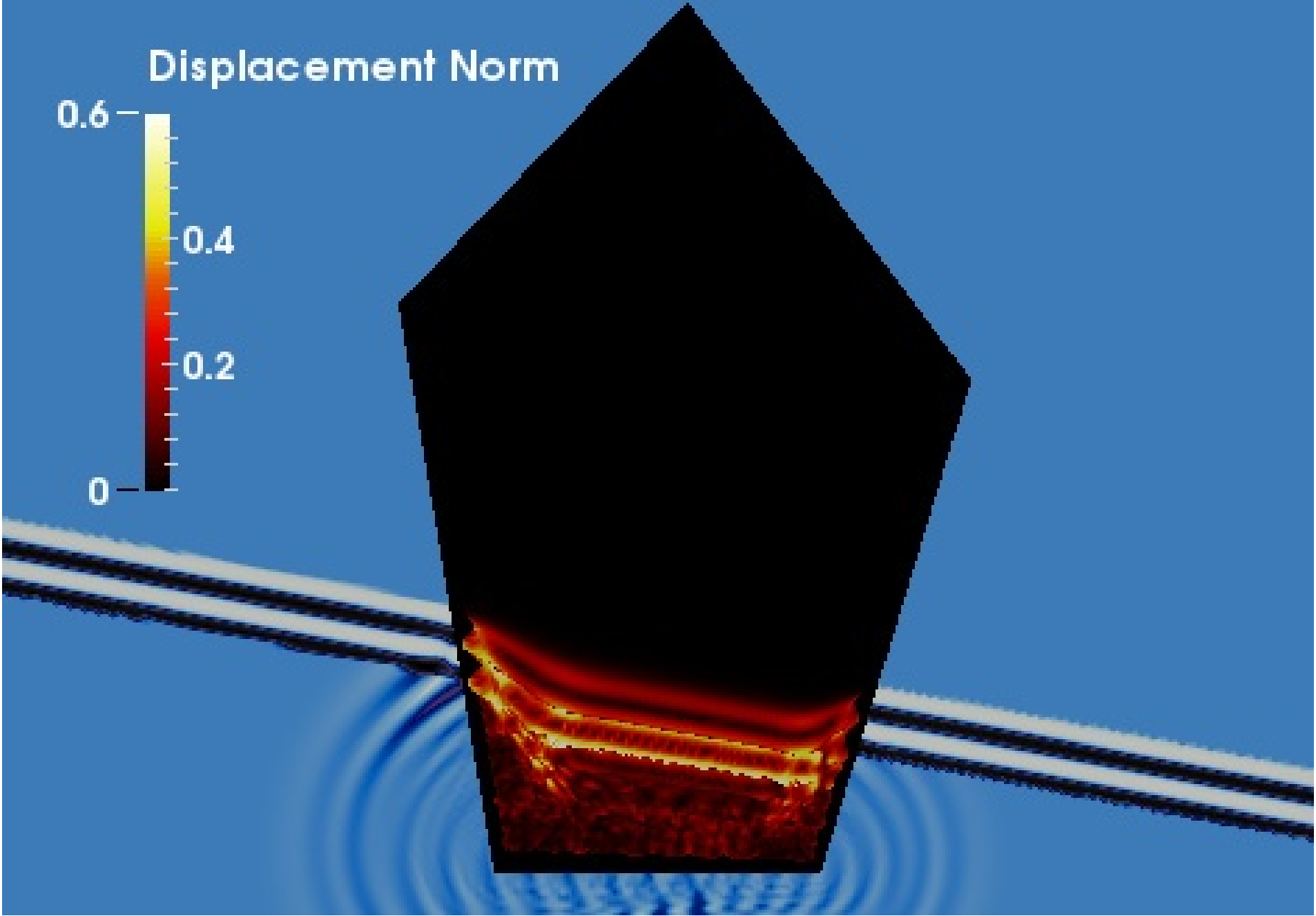}
\includegraphics[height =5cm]{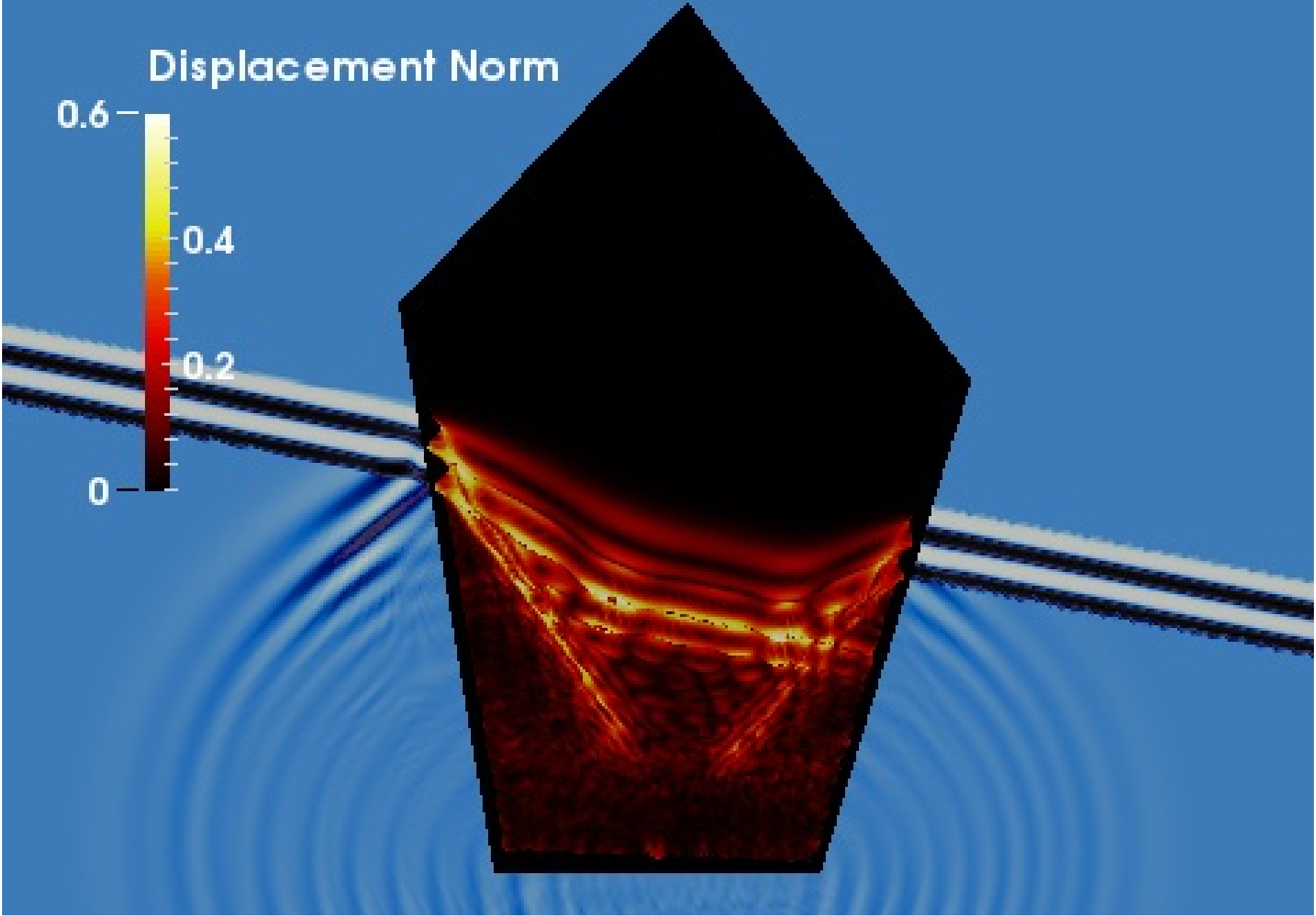}
\includegraphics[height =5cm]{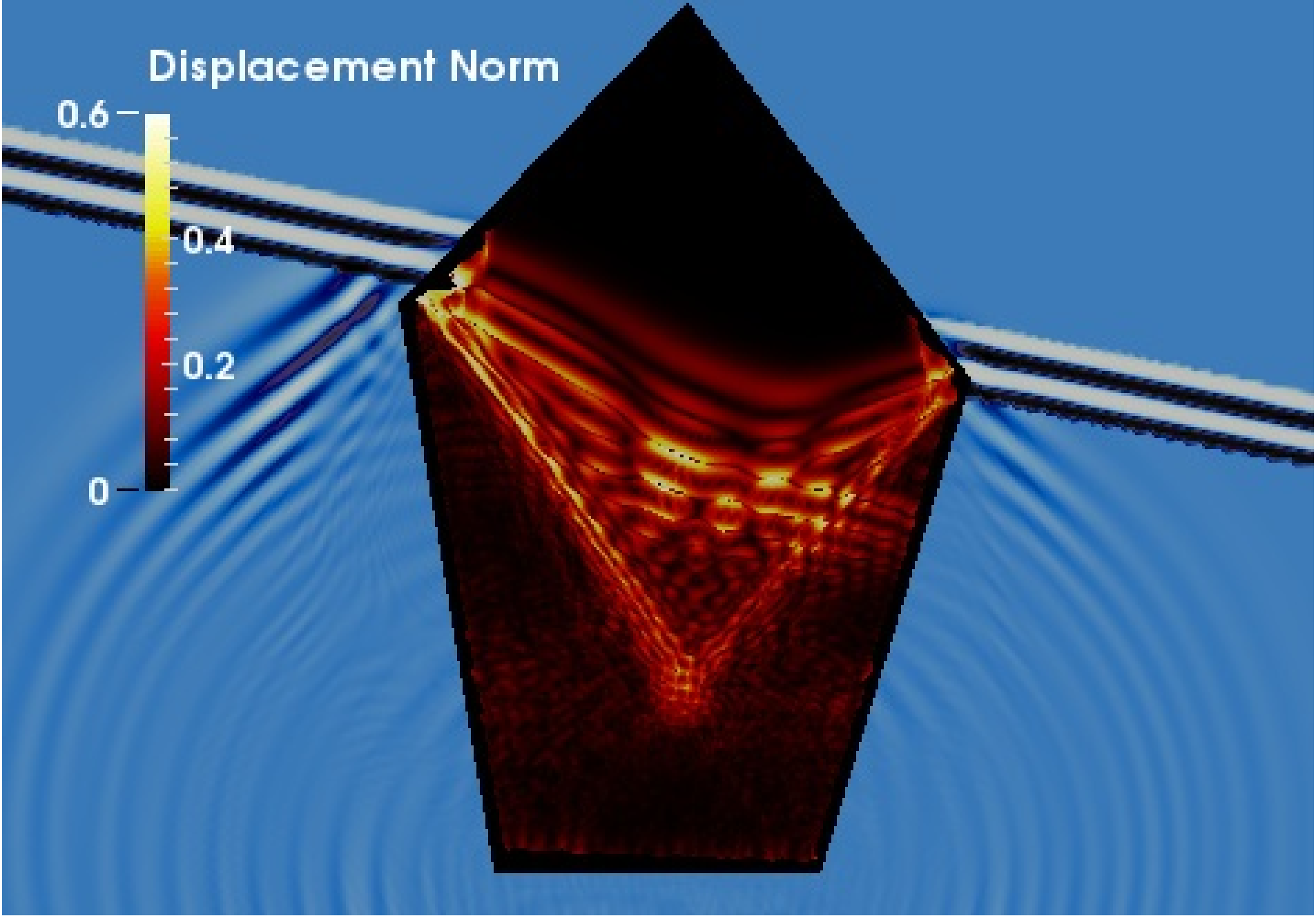}
\includegraphics[height =5cm]{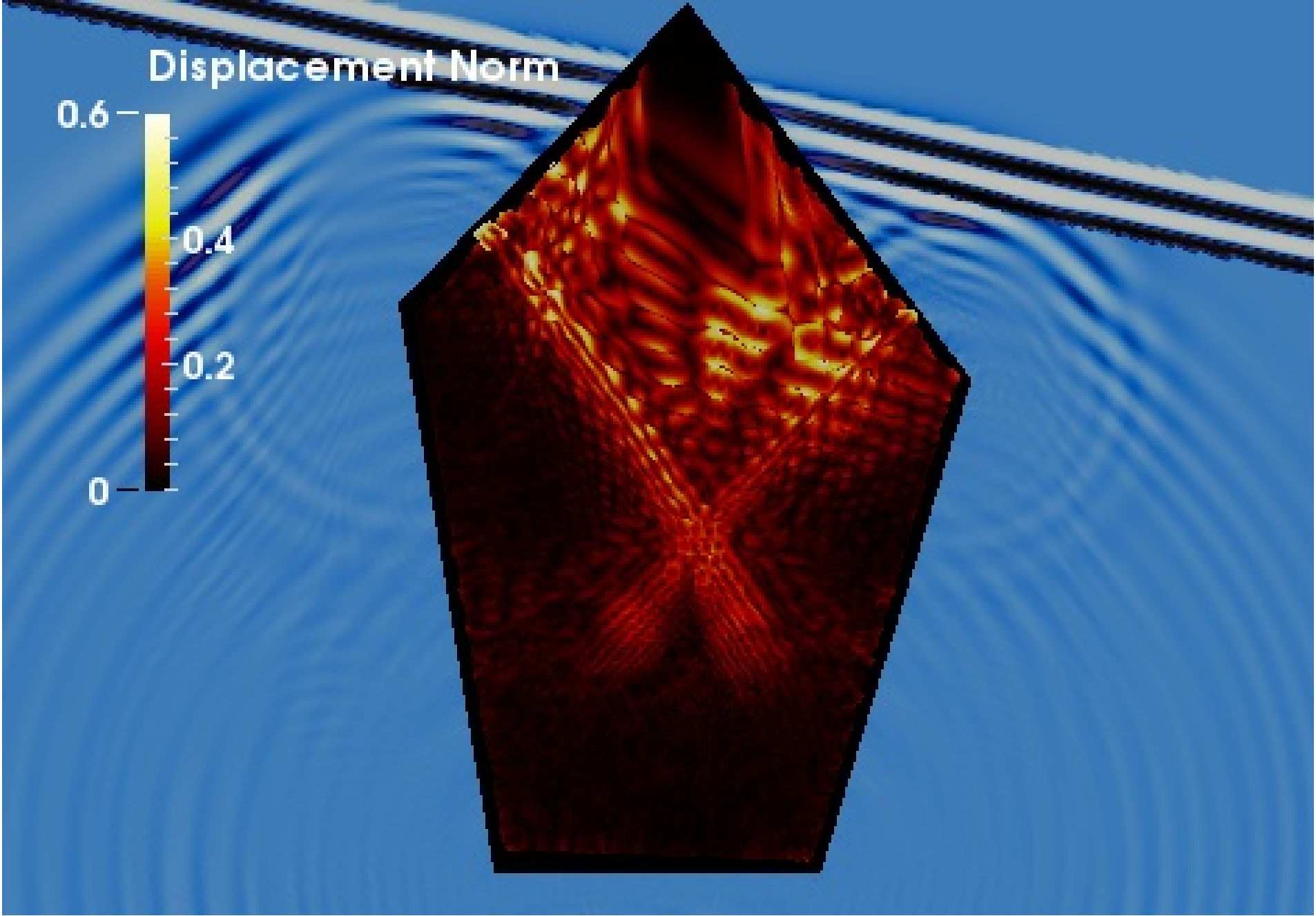}
\includegraphics[height =5cm]{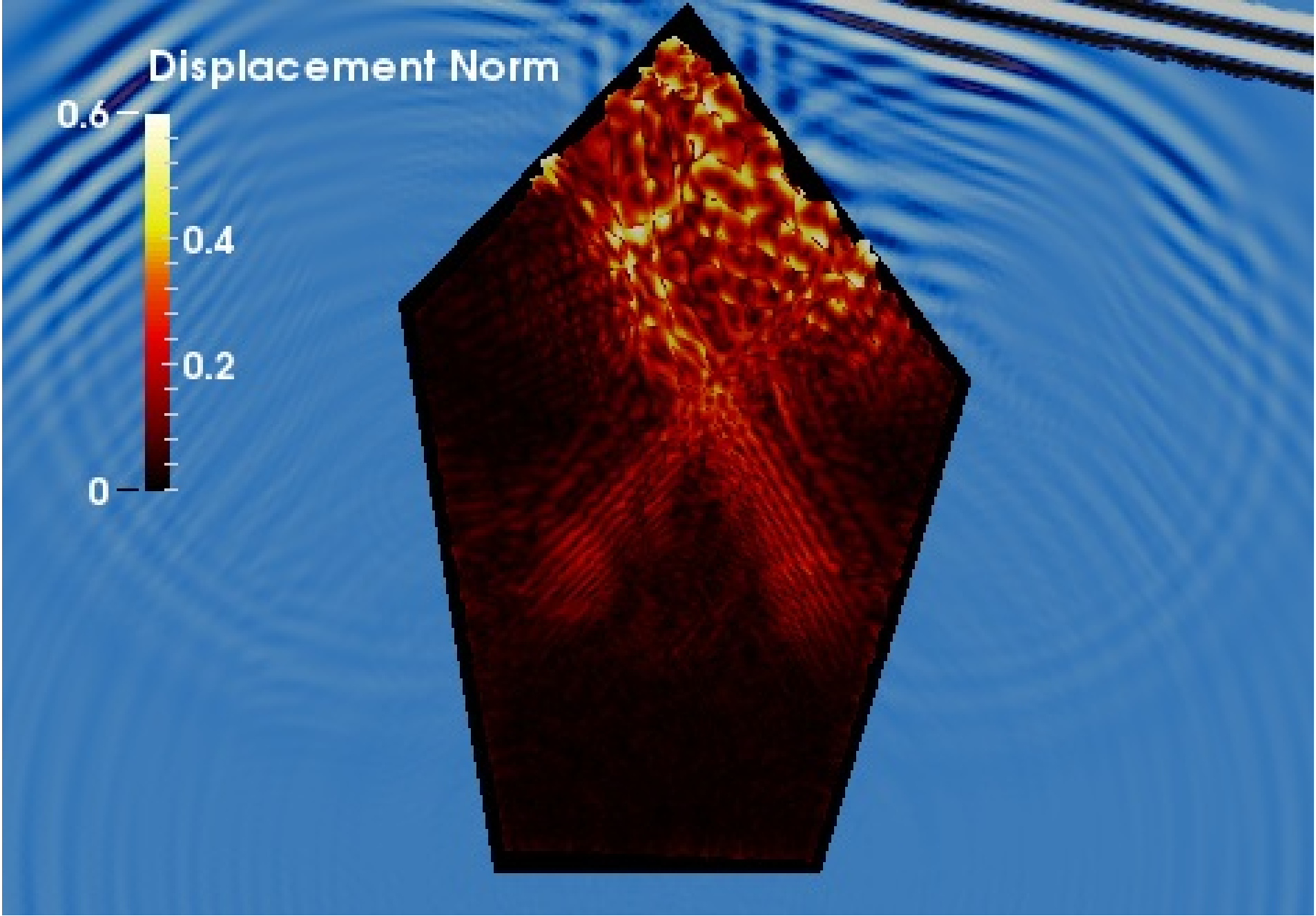}
\caption[Thermoelastic scattering by a pentagon]{{\footnotesize Close up of the norm of the elastic displacement for times $t=0.25,0.6, 0.95,1.3,1.65,2$. Black represents no displacement.}} \label{fig:c5:6}
}
\end{figure}

\begin{figure}{\centering
\includegraphics[height =5cm]{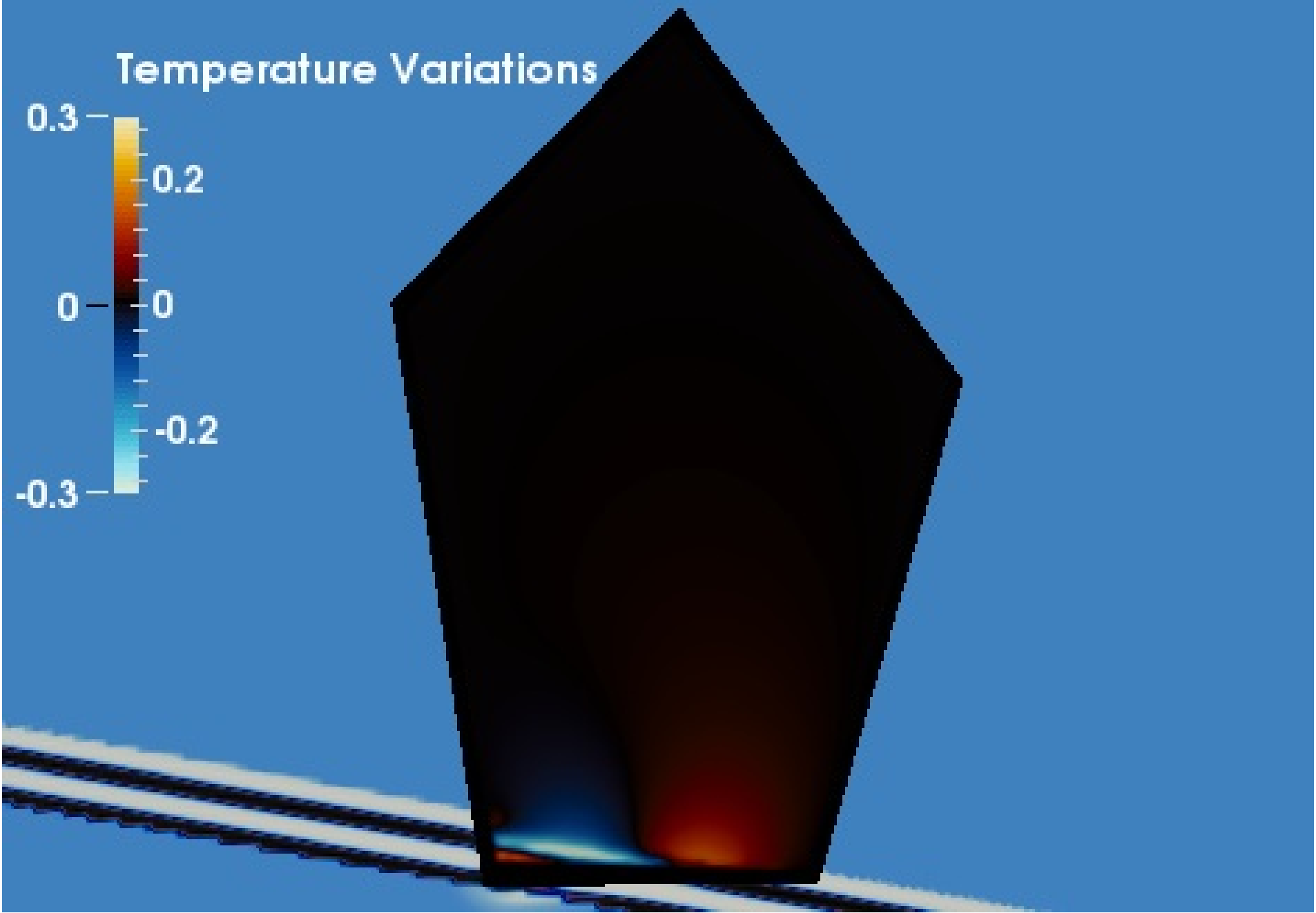}
\includegraphics[height =5cm]{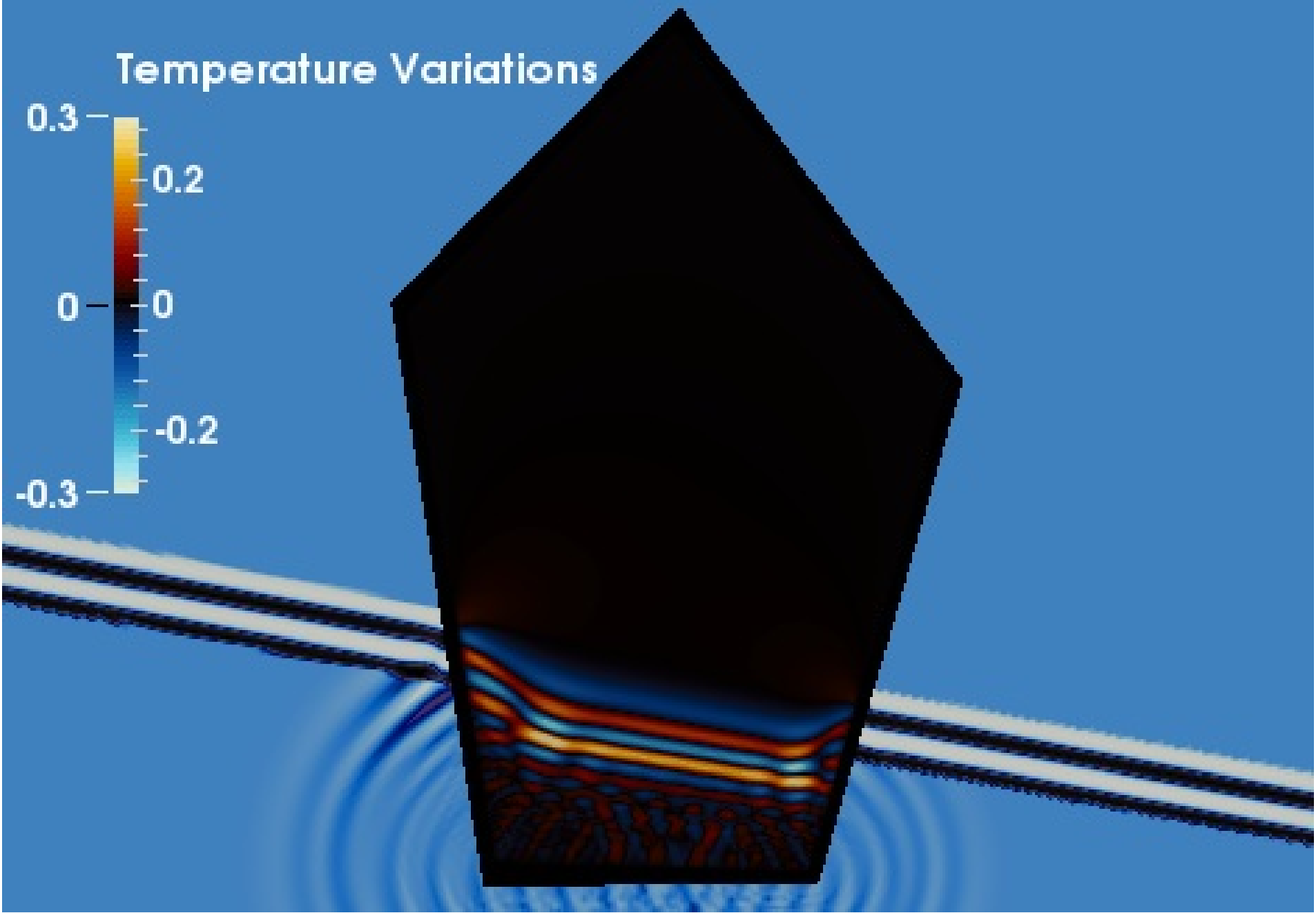}
\includegraphics[height =5cm]{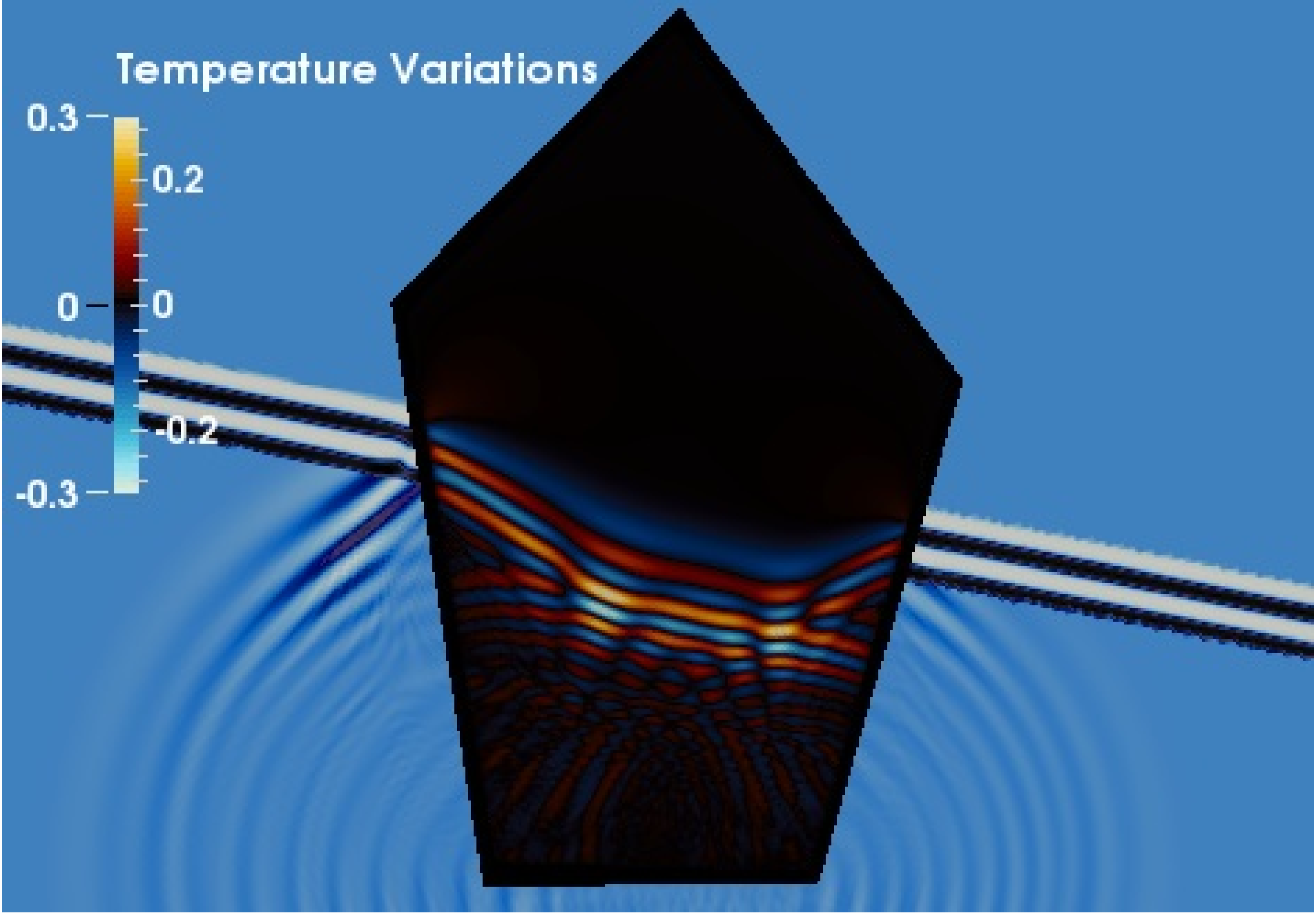}
\includegraphics[height =5cm]{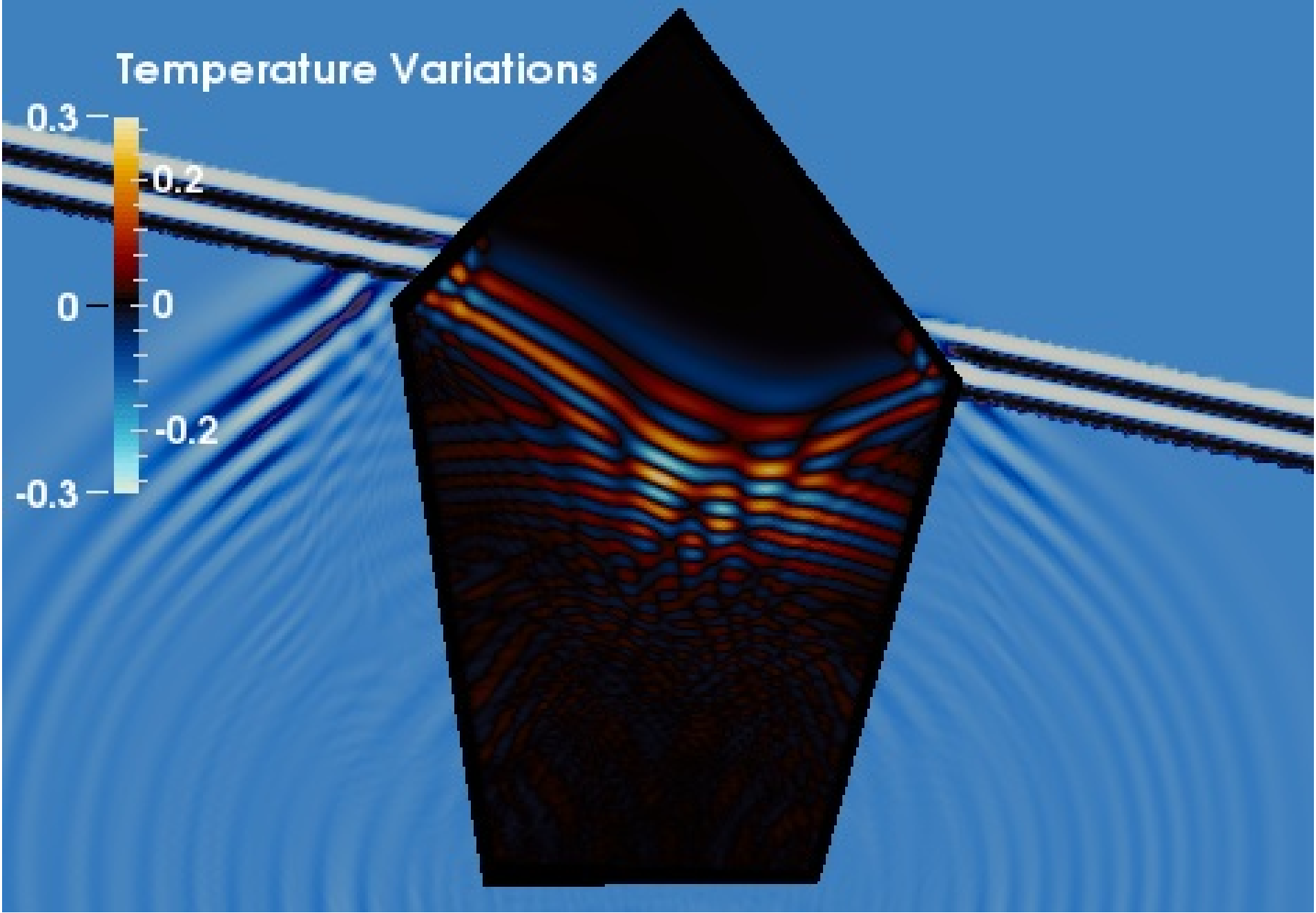}
\includegraphics[height =5cm]{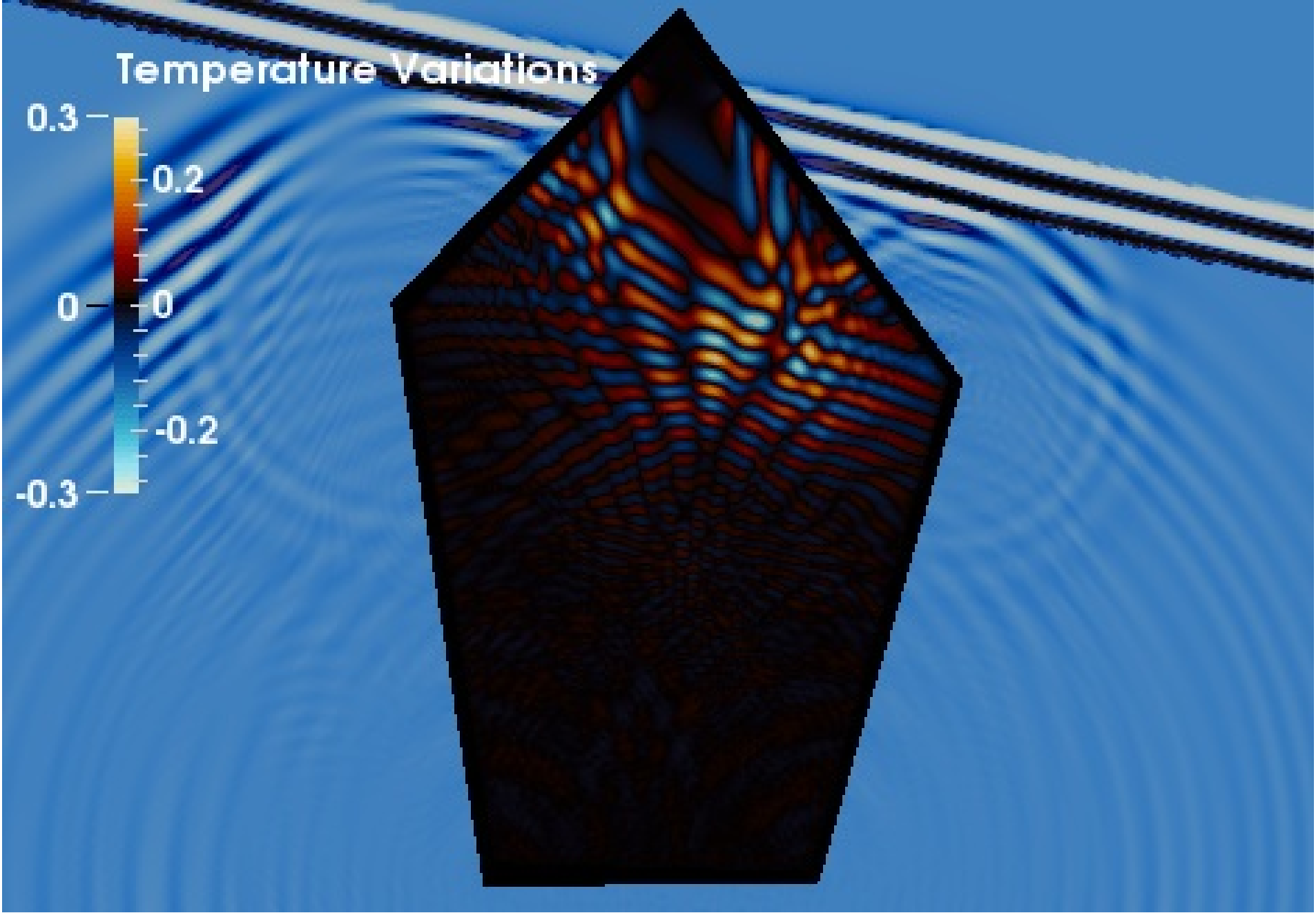}
\includegraphics[height =5cm]{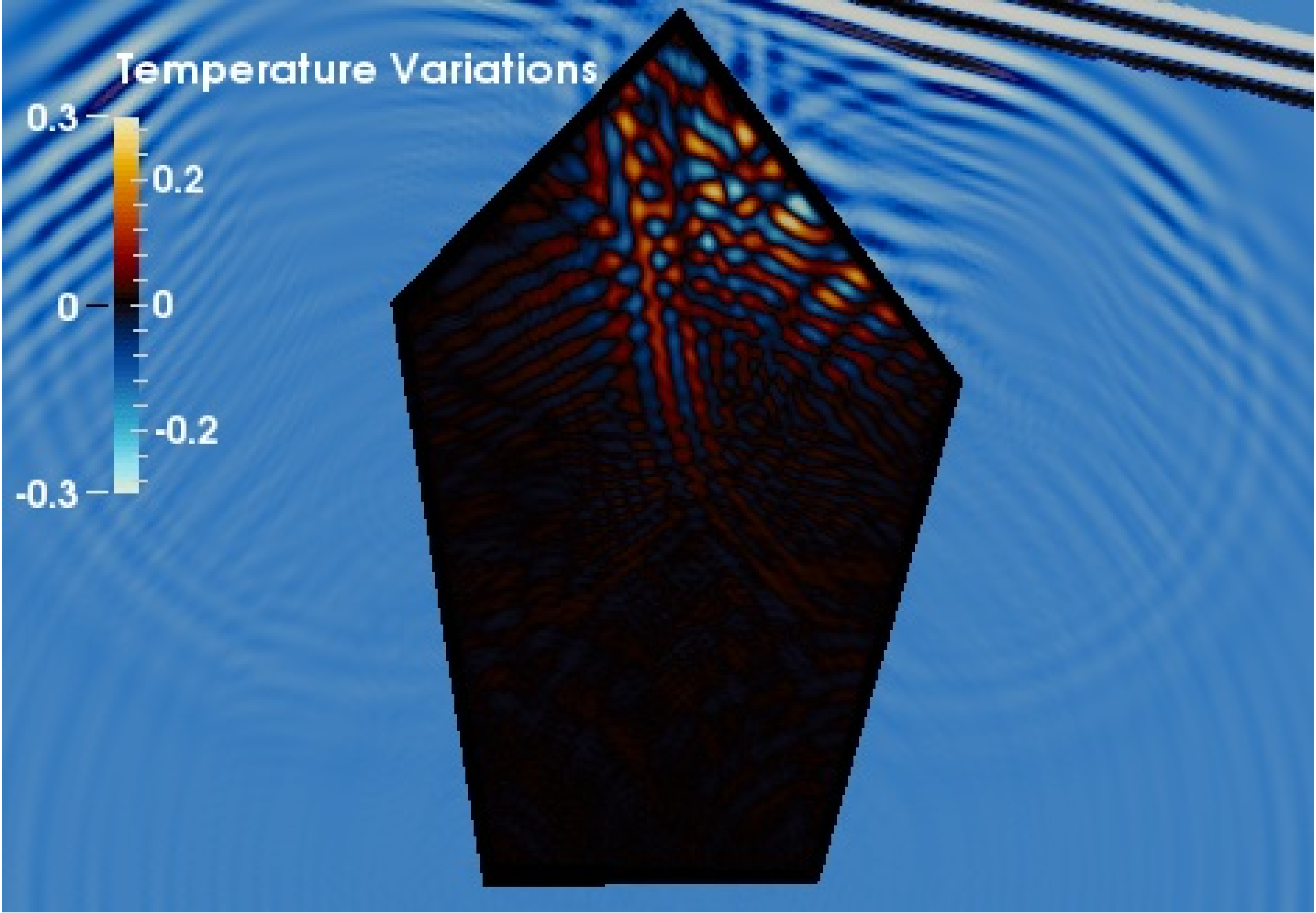}
\caption[Thermoelastic scattering by a pentagon]{{\footnotesize Close up of the norm of the temperature variations with respect to the reference configuration for times $t=0.25,0.6, 0.95,1.3,1.65,2$. Black represents zero, whereas shades of red and blue represent positive and negative variations respectively.}}\label{fig:c5:7}
}
\end{figure}

\begin{figure}{\centering
\includegraphics[height =7cm]{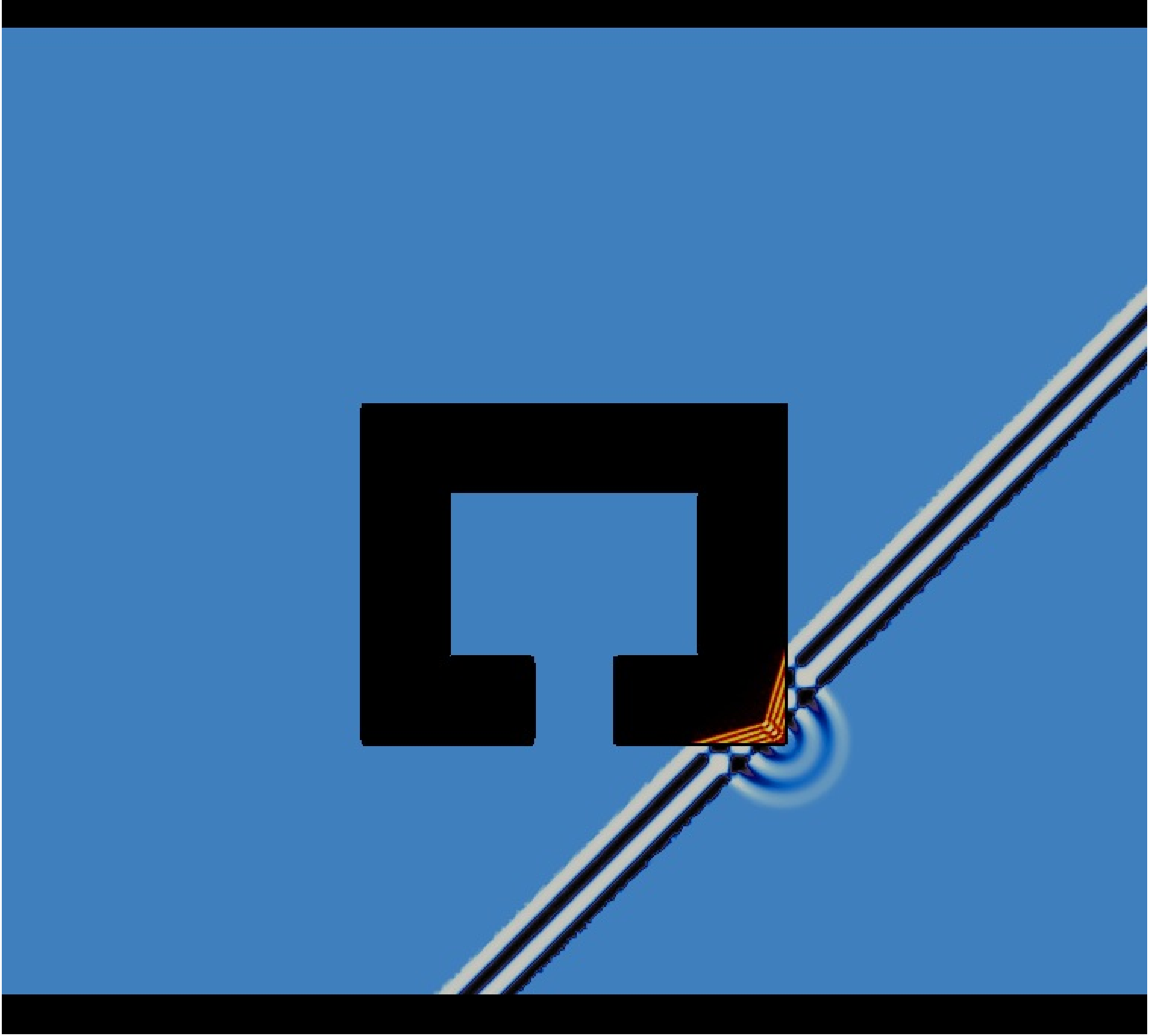}
\includegraphics[height =7cm]{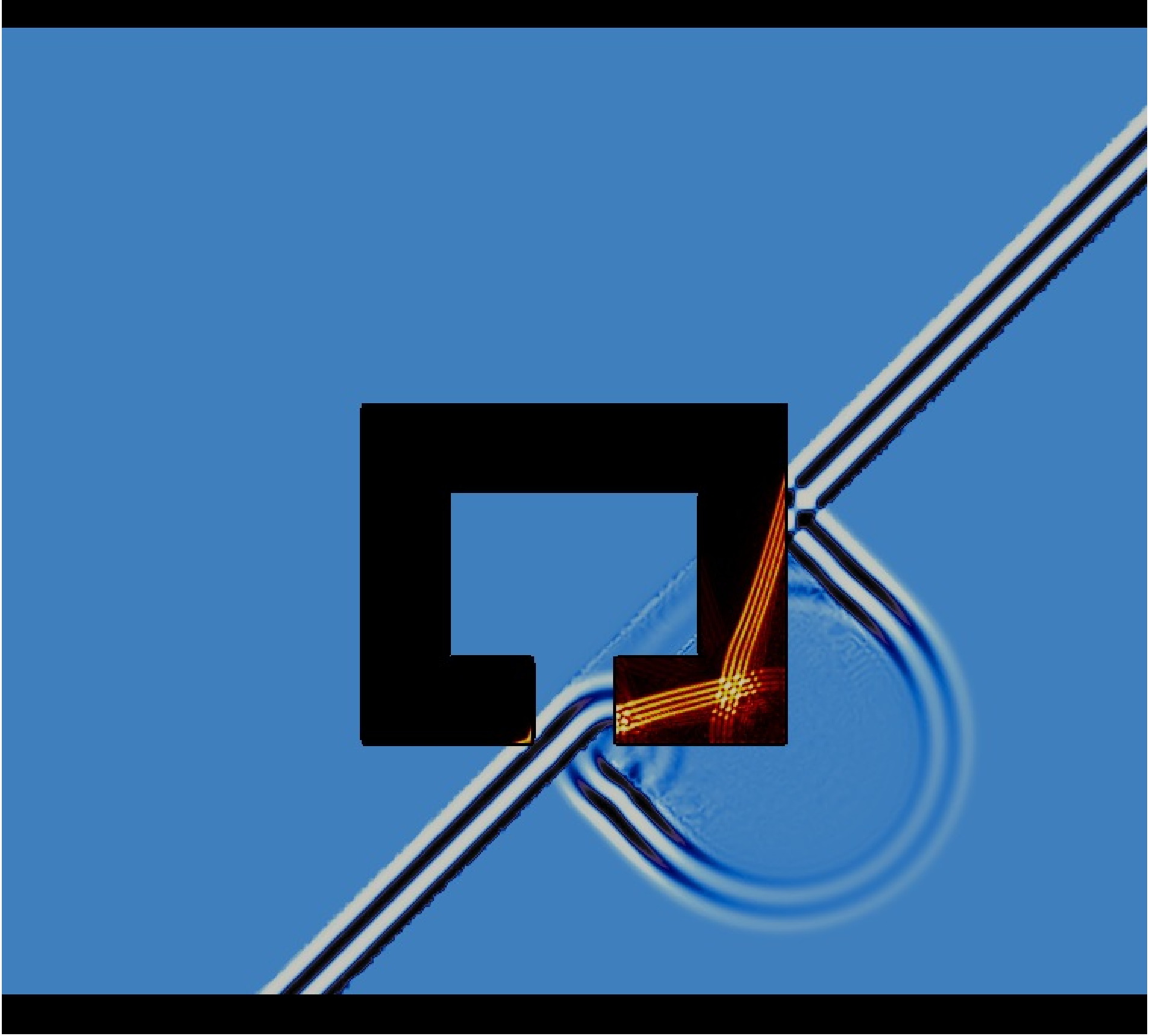}
\includegraphics[height =7cm]{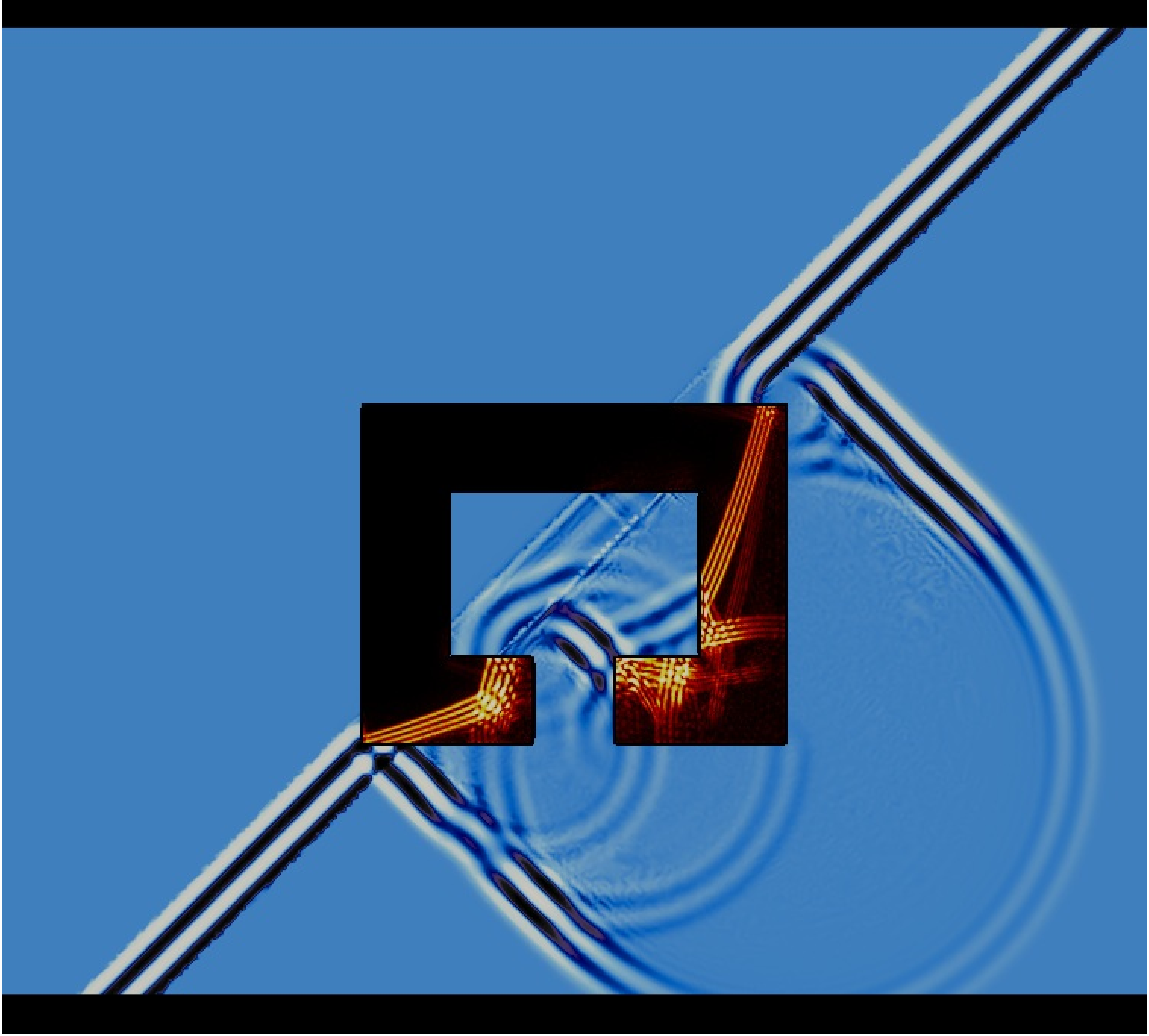}
\includegraphics[height =7cm]{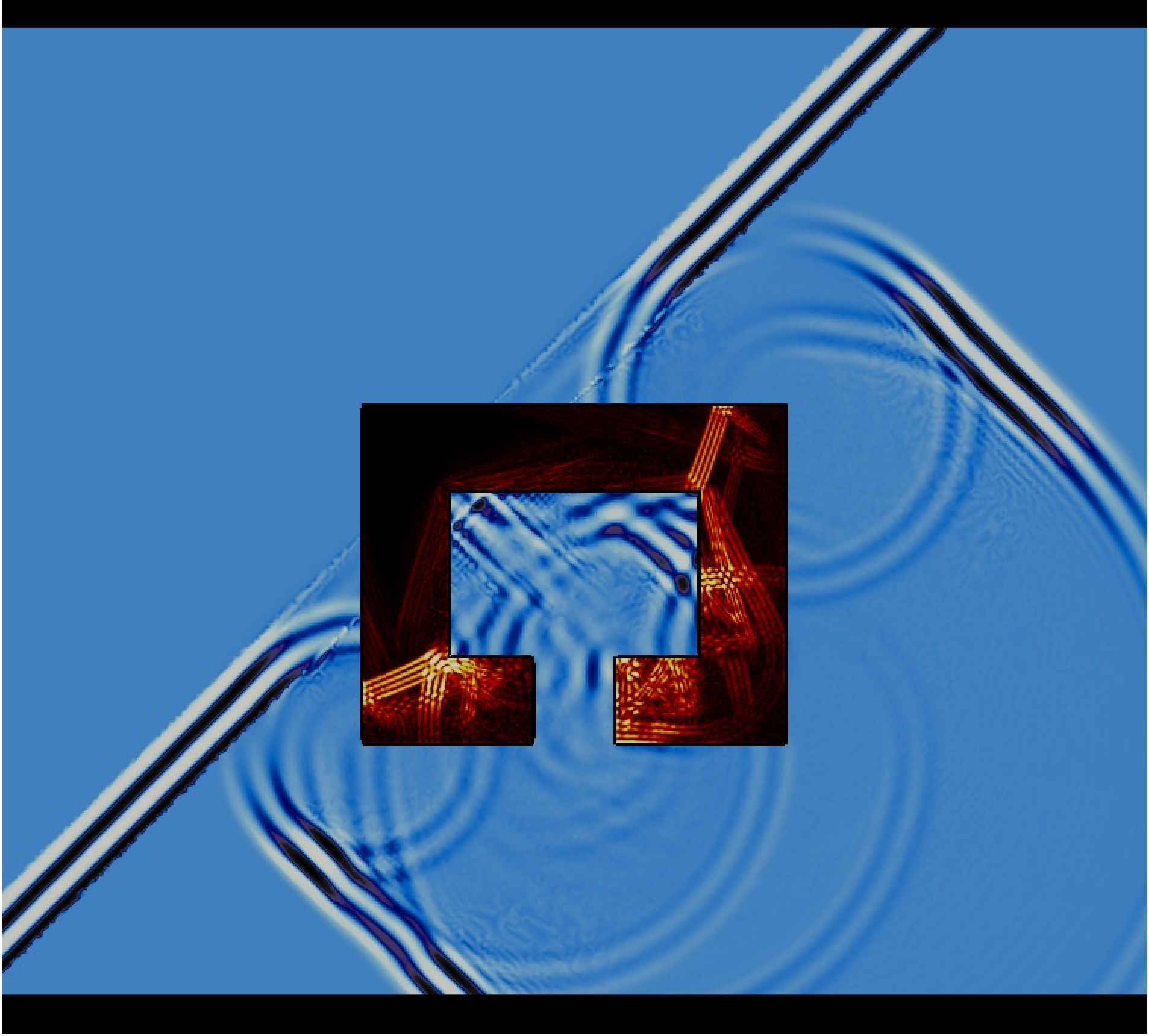}
\includegraphics[height =7cm]{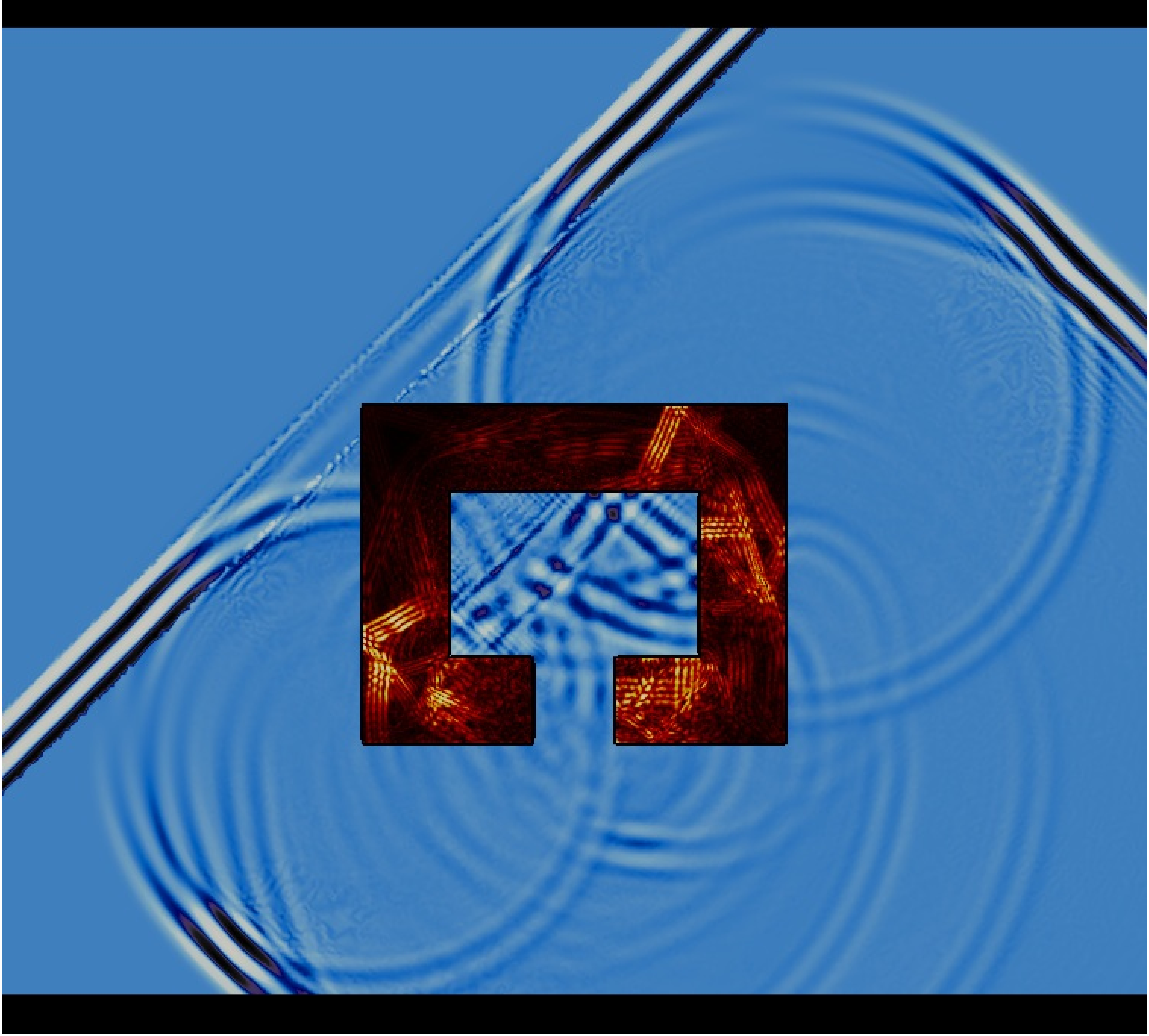}
\includegraphics[height =7cm]{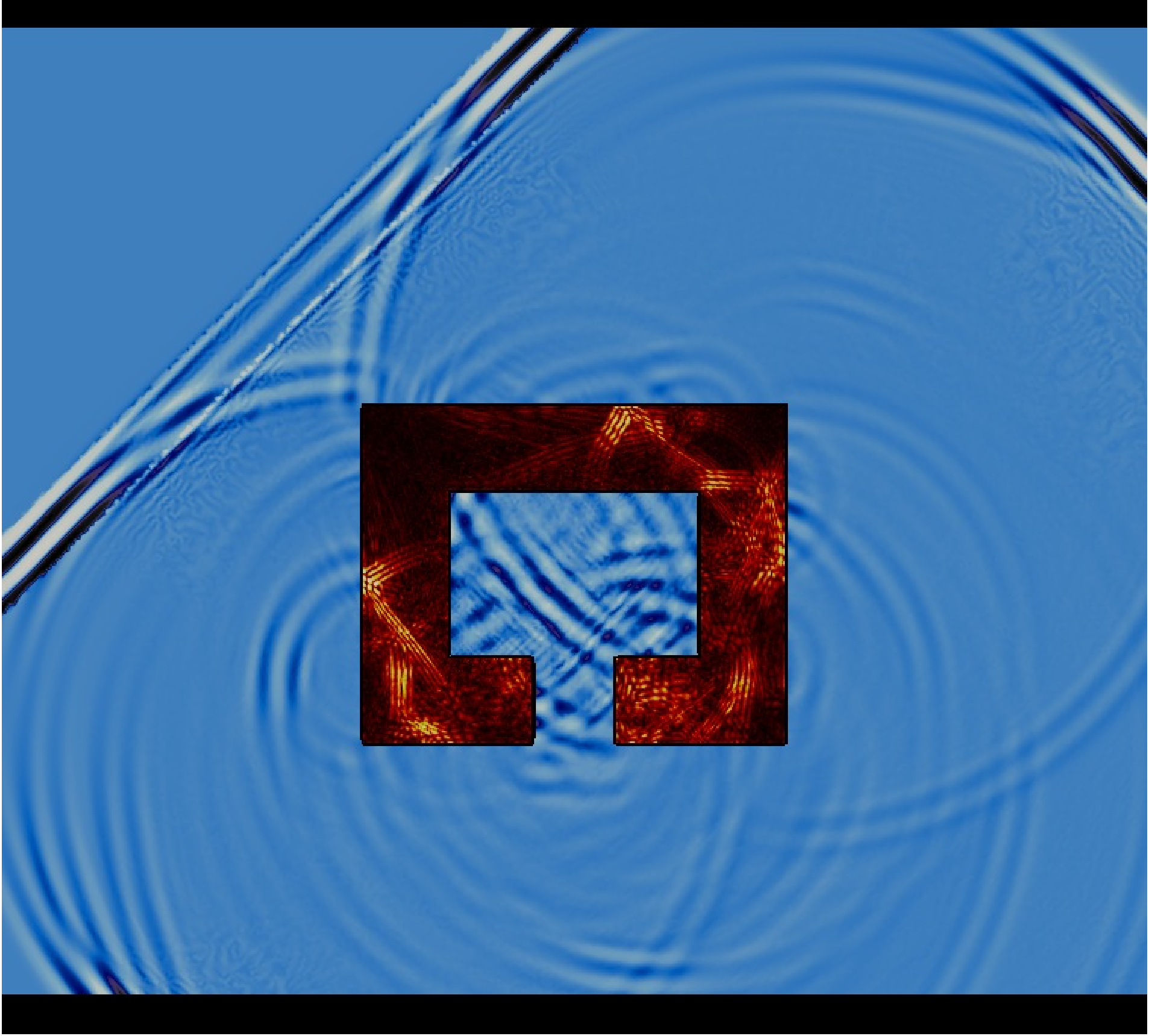}
\caption[Thermoelastic scattering by a pentagon]{{\footnotesize Snapshots of the total acoustic field at times $t=0.3,0.6,0.9, 1.2,1.5,1.8$. The interior domain shows the norm of the elastic displacement.}}\label{fig:c5:8}
}
\end{figure}

\begin{figure}{\centering
\includegraphics[height =5cm]{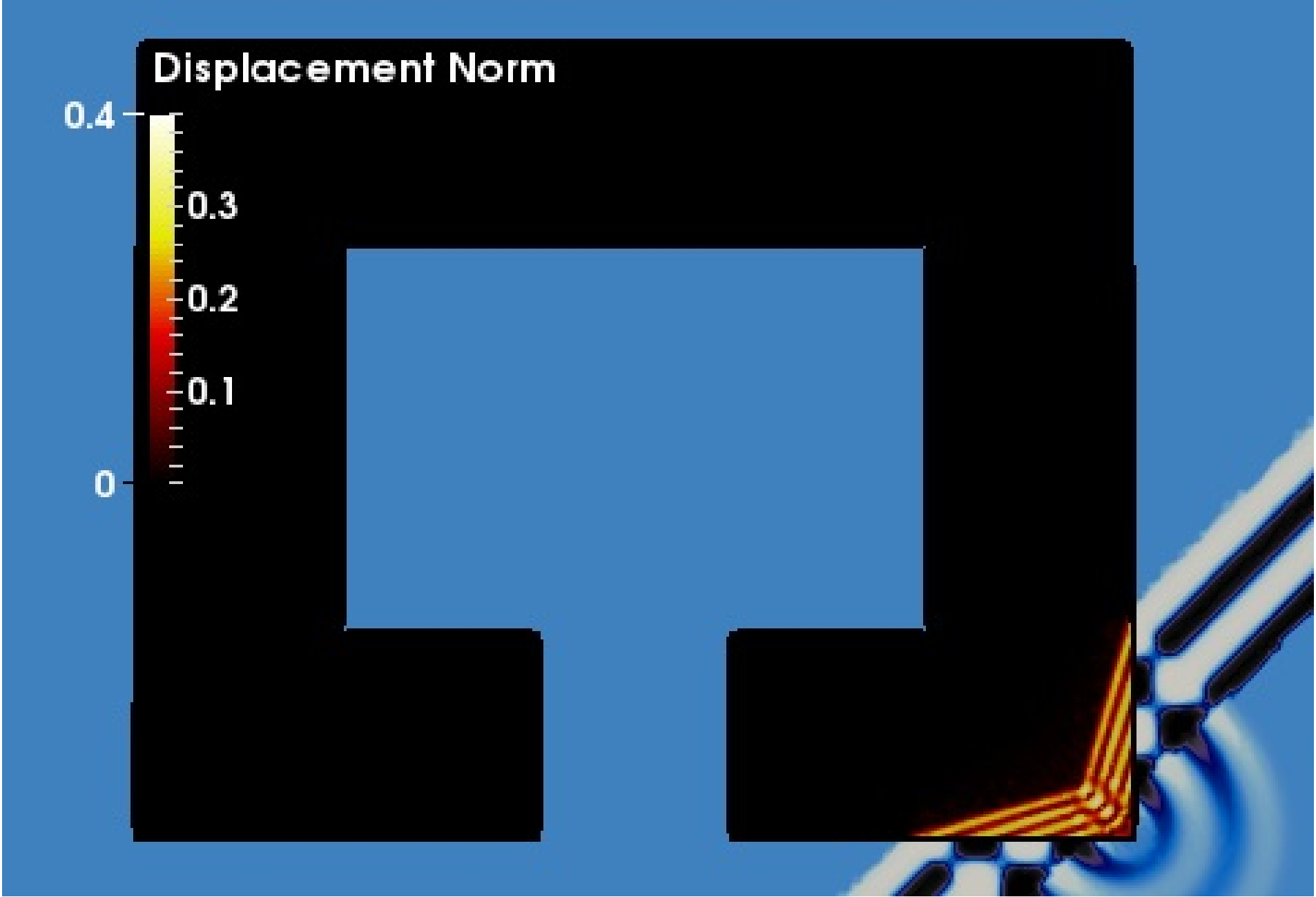}
\includegraphics[height =5cm]{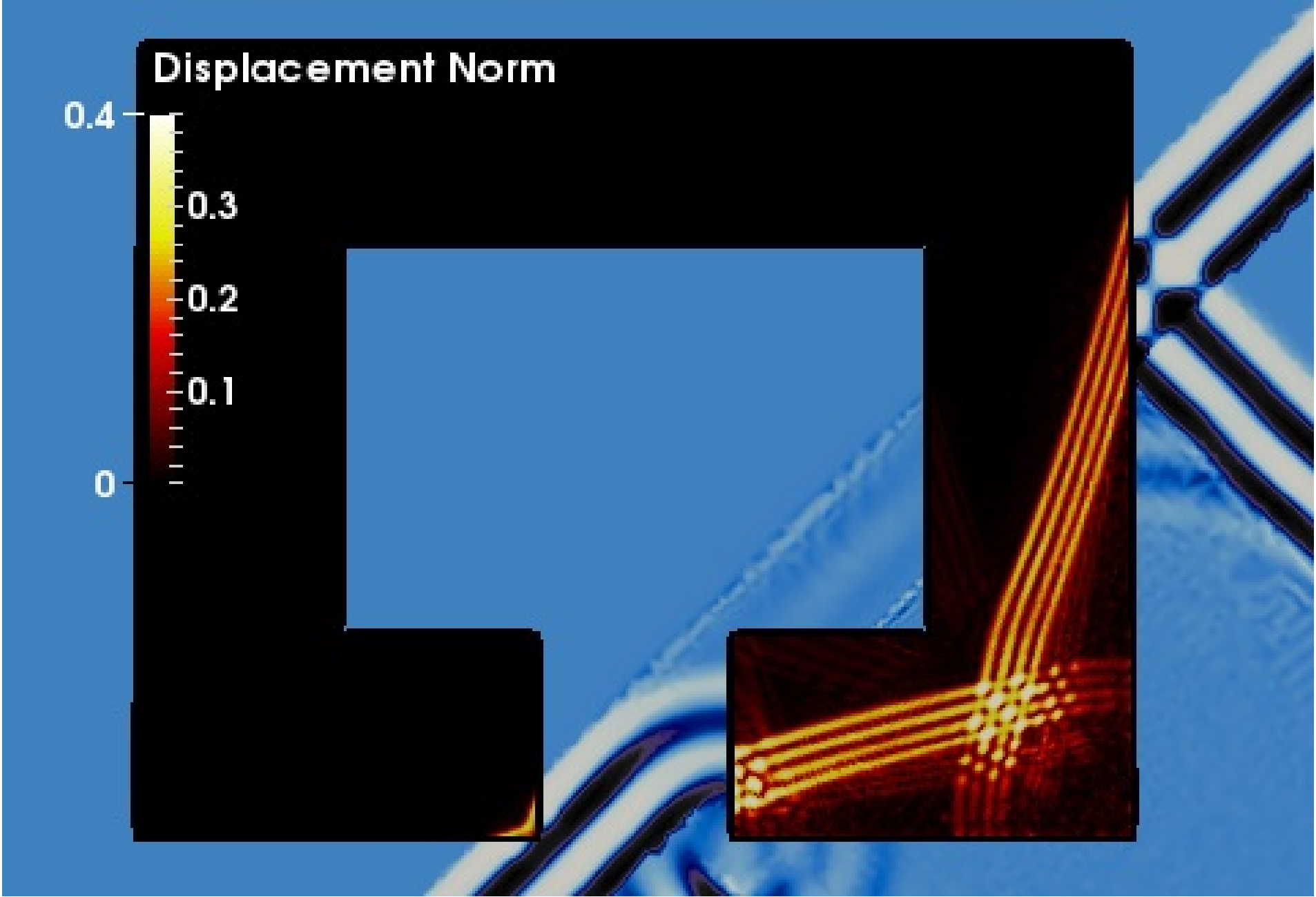}
\includegraphics[height =5cm]{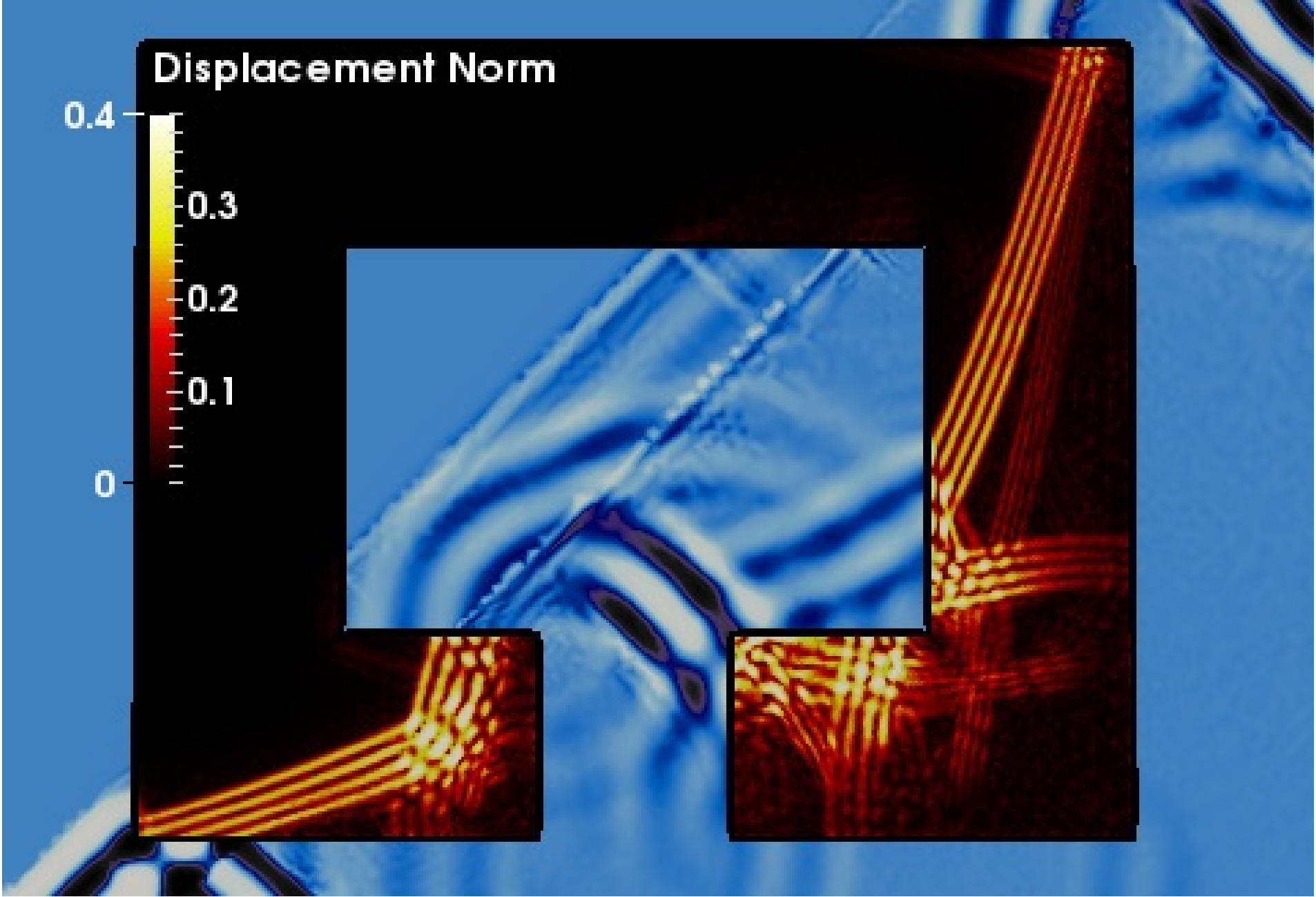}
\includegraphics[height =5cm]{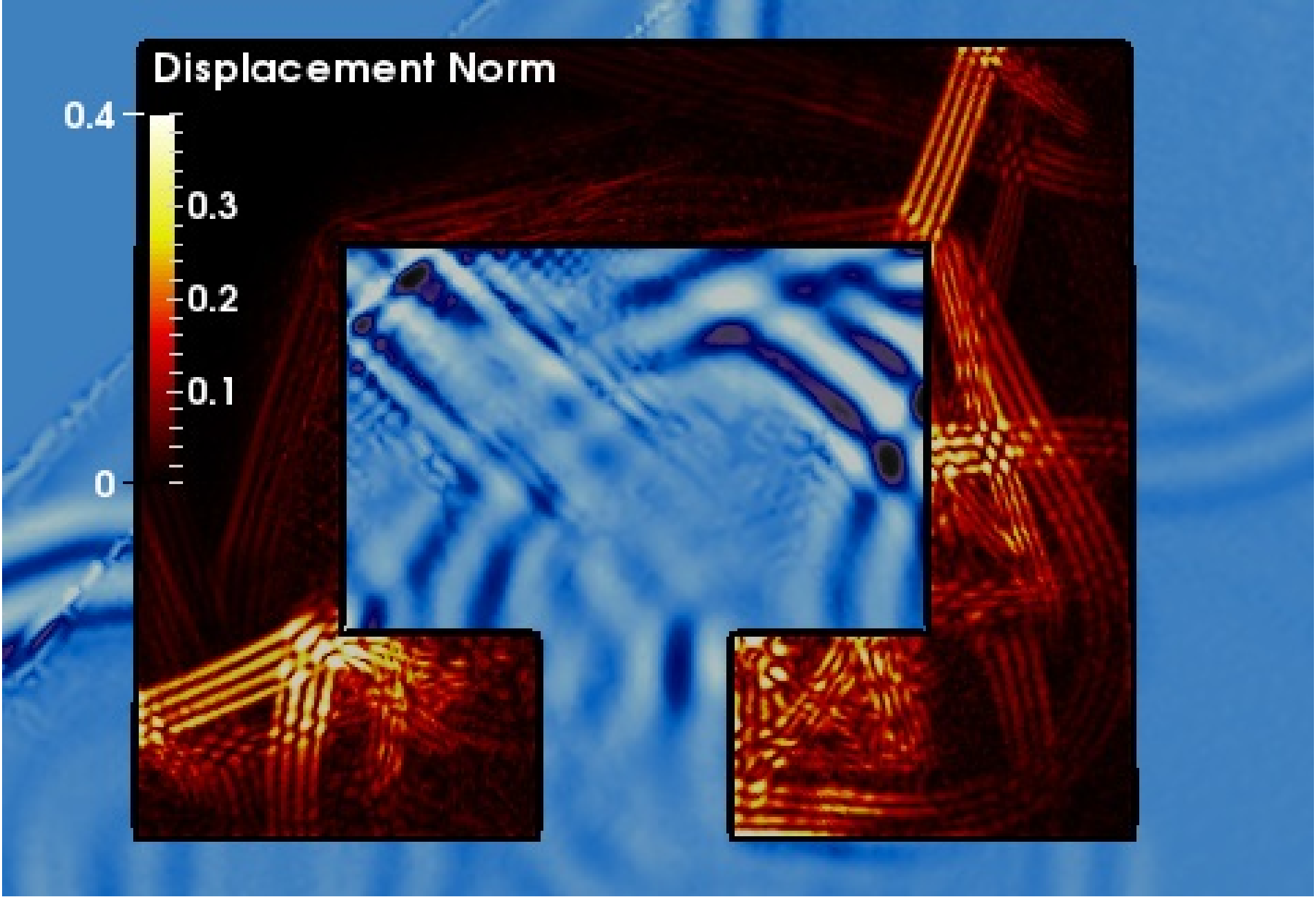}
\includegraphics[height =5cm]{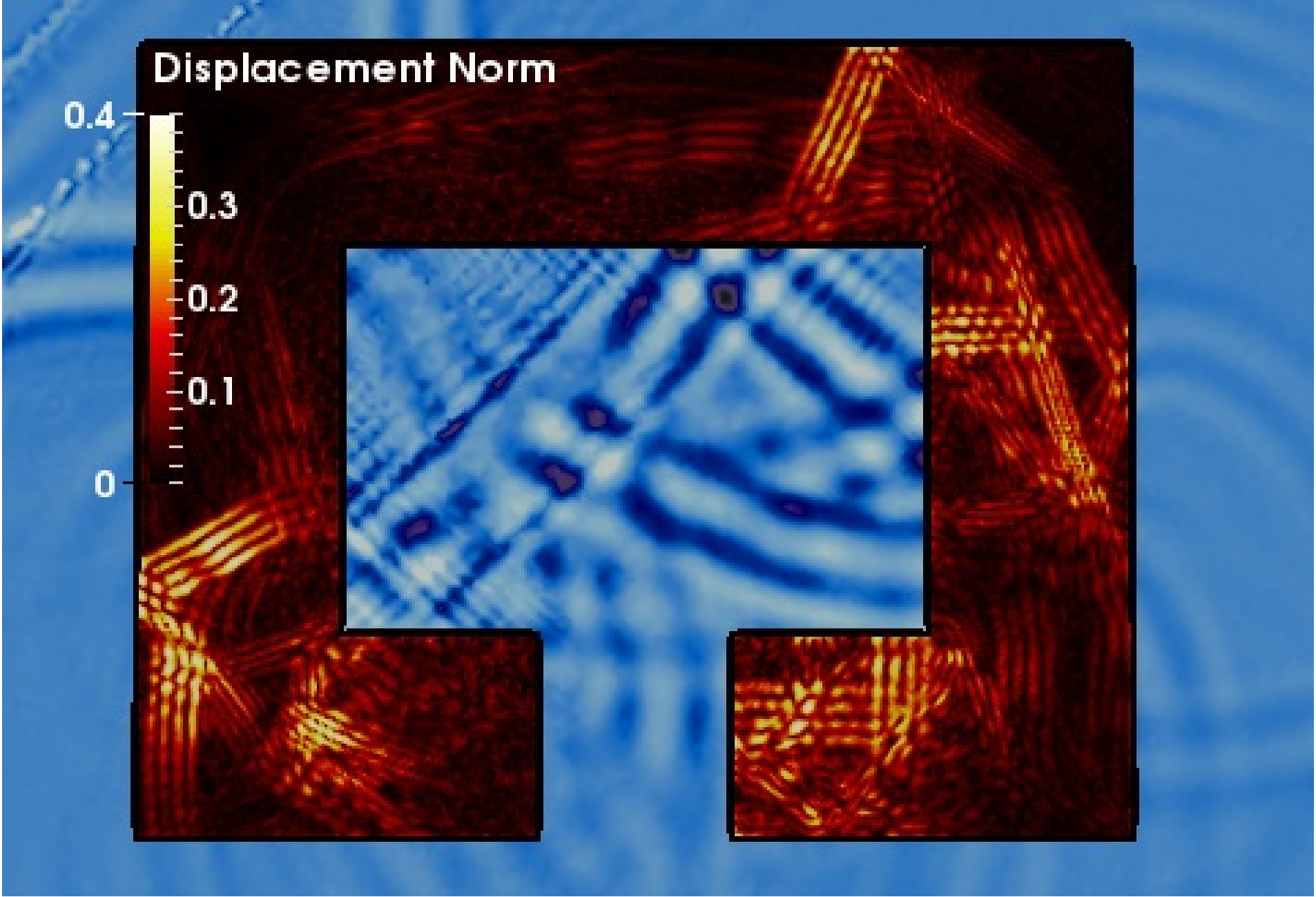}
\includegraphics[height =5cm]{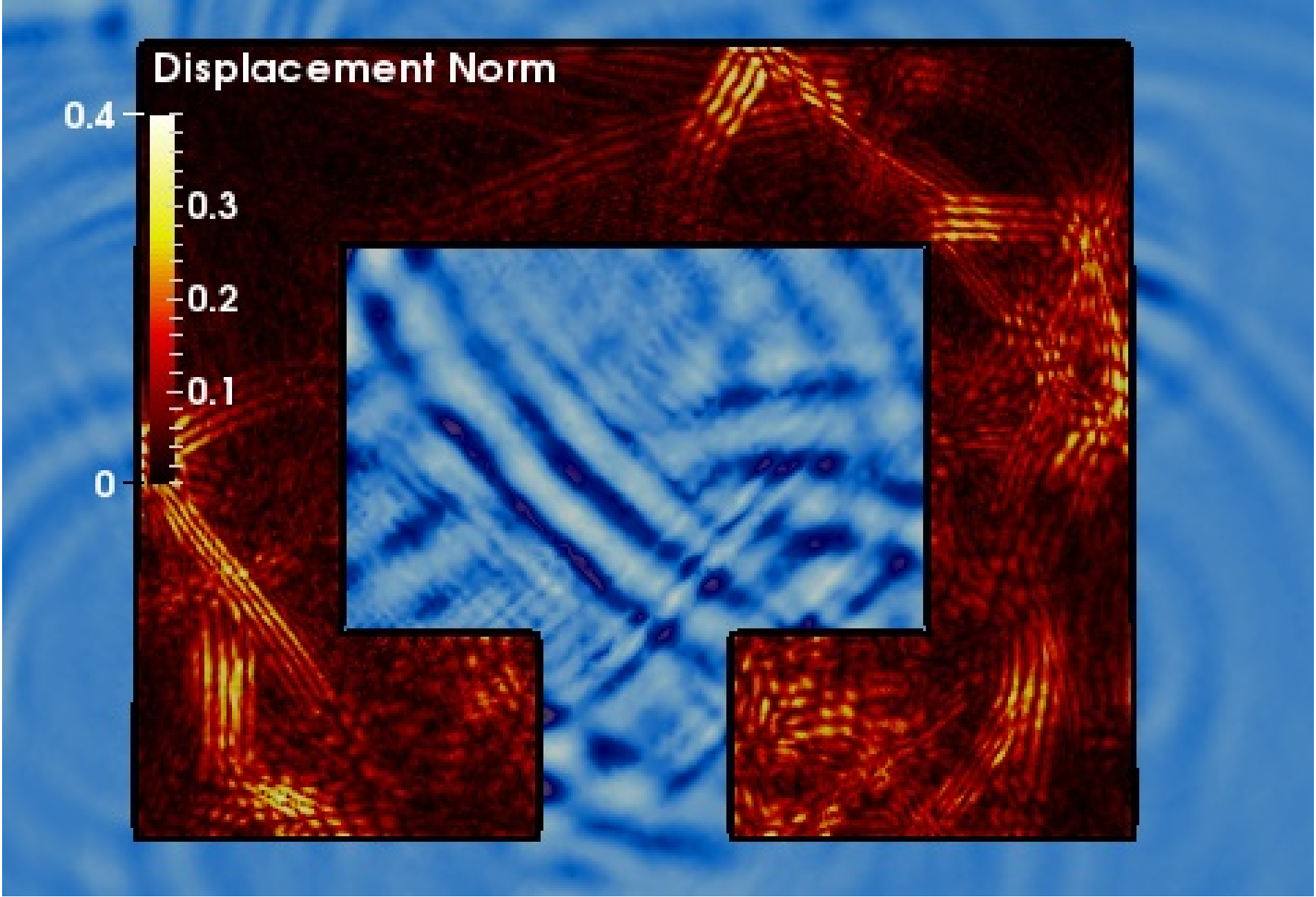}
\caption[Thermoelastic scattering by a pentagon]{{\footnotesize Close up of the norm of the elastic displacement for times $t=0.3,0.6,0.9, 1.2,1.5,1.8$. Black represents no displacement.}}\label{fig:c5:9}
}
\end{figure}

\begin{figure}{\centering
\includegraphics[height =5cm]{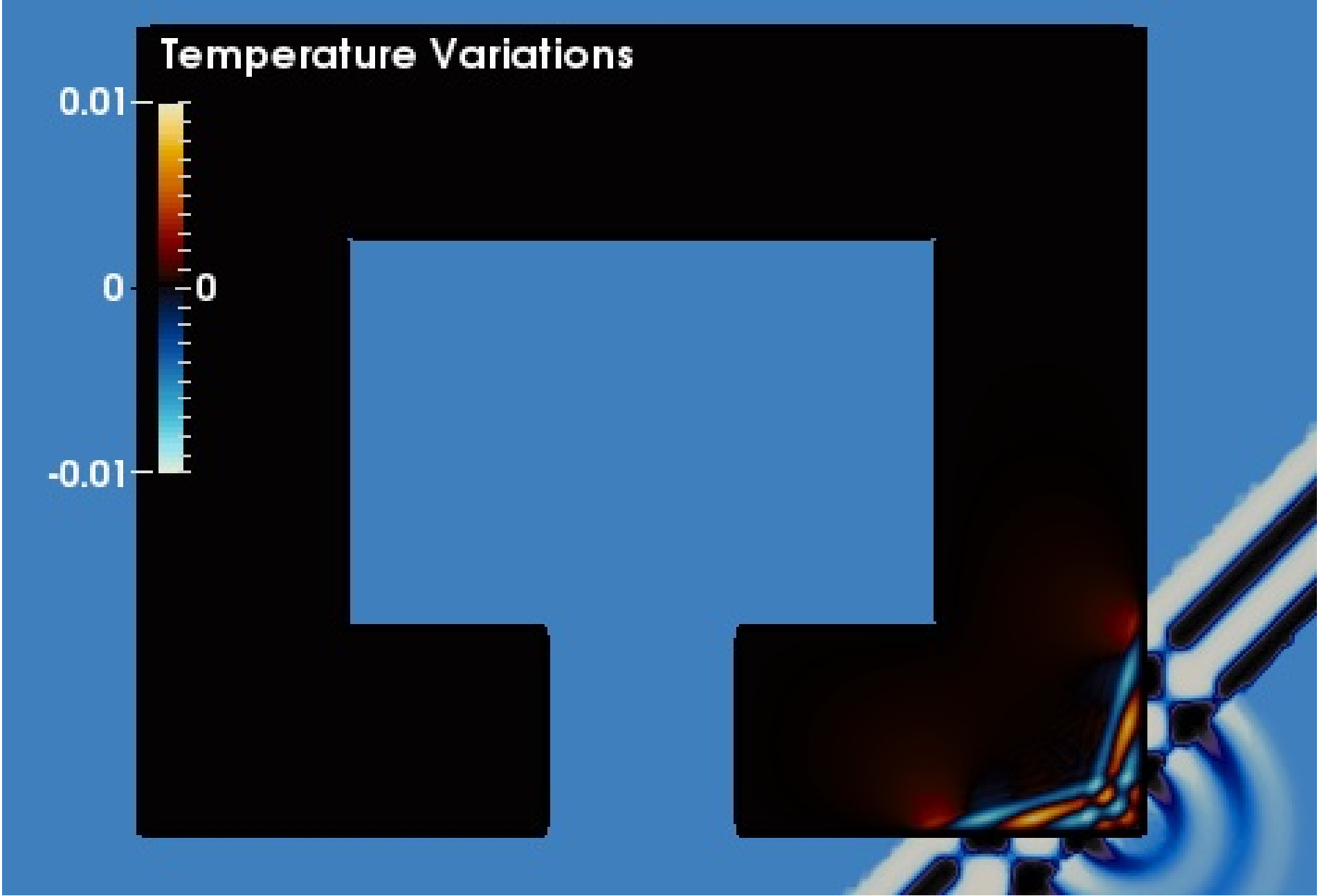}
\includegraphics[height =5cm]{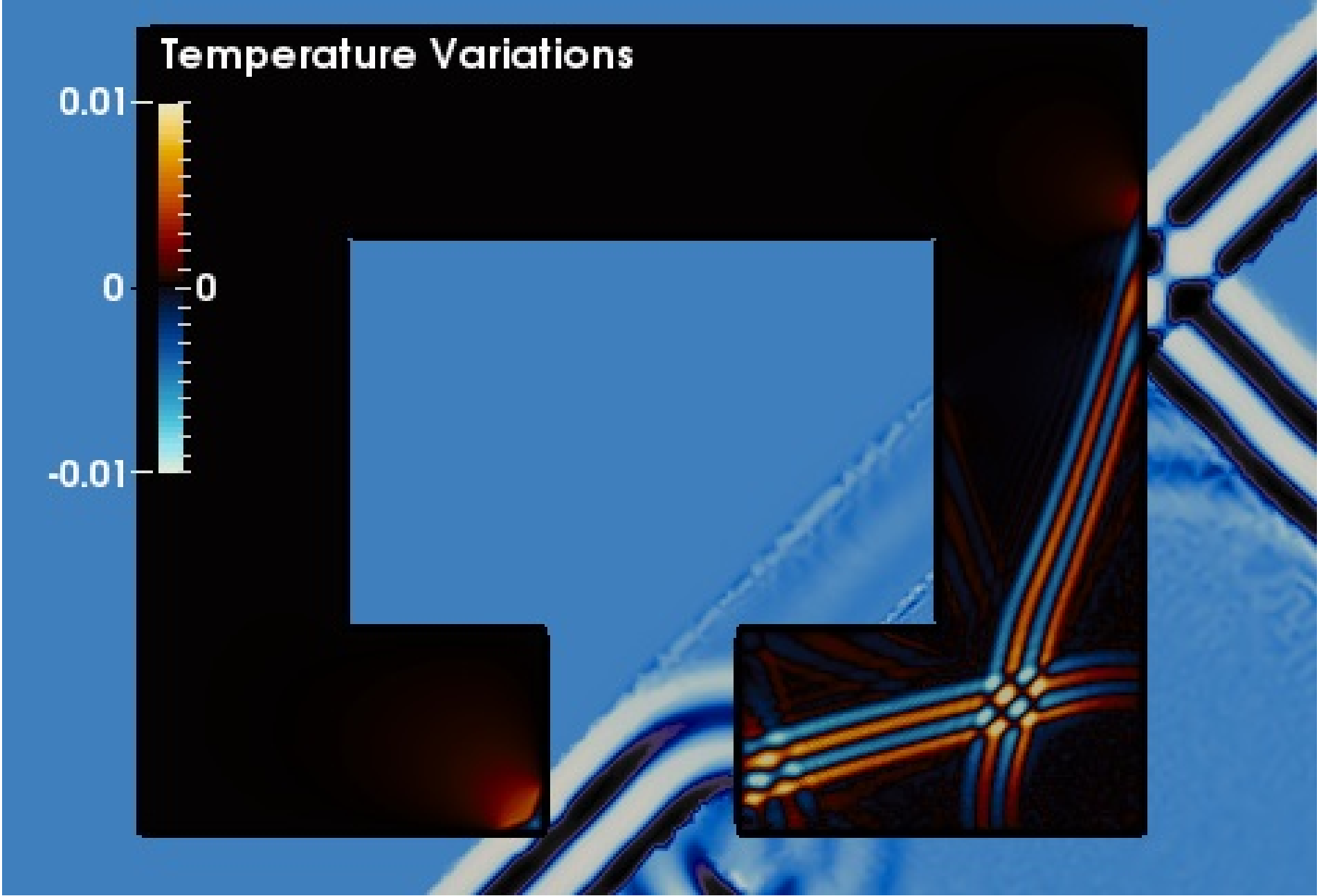}
\includegraphics[height =5cm]{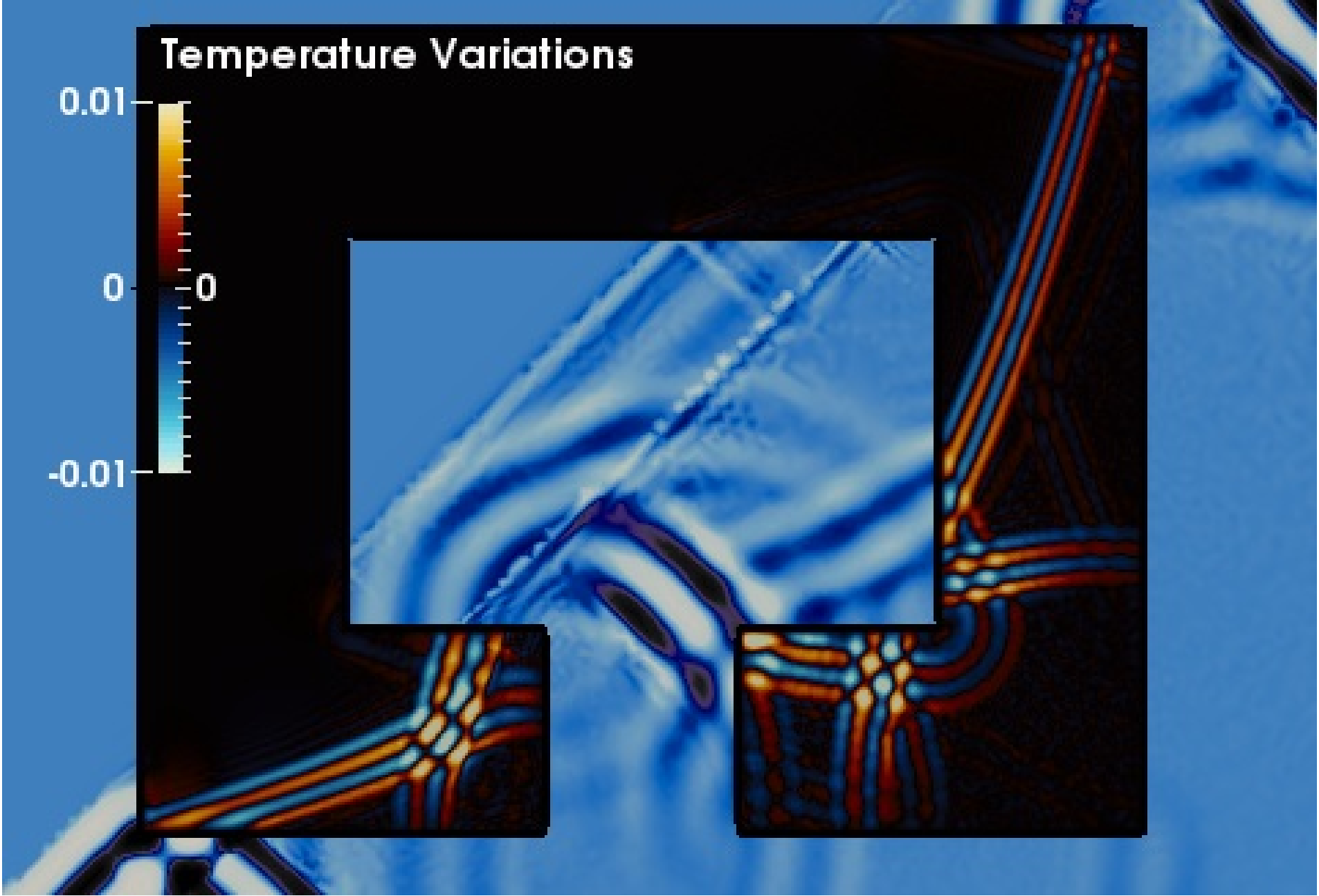}
\includegraphics[height =5cm]{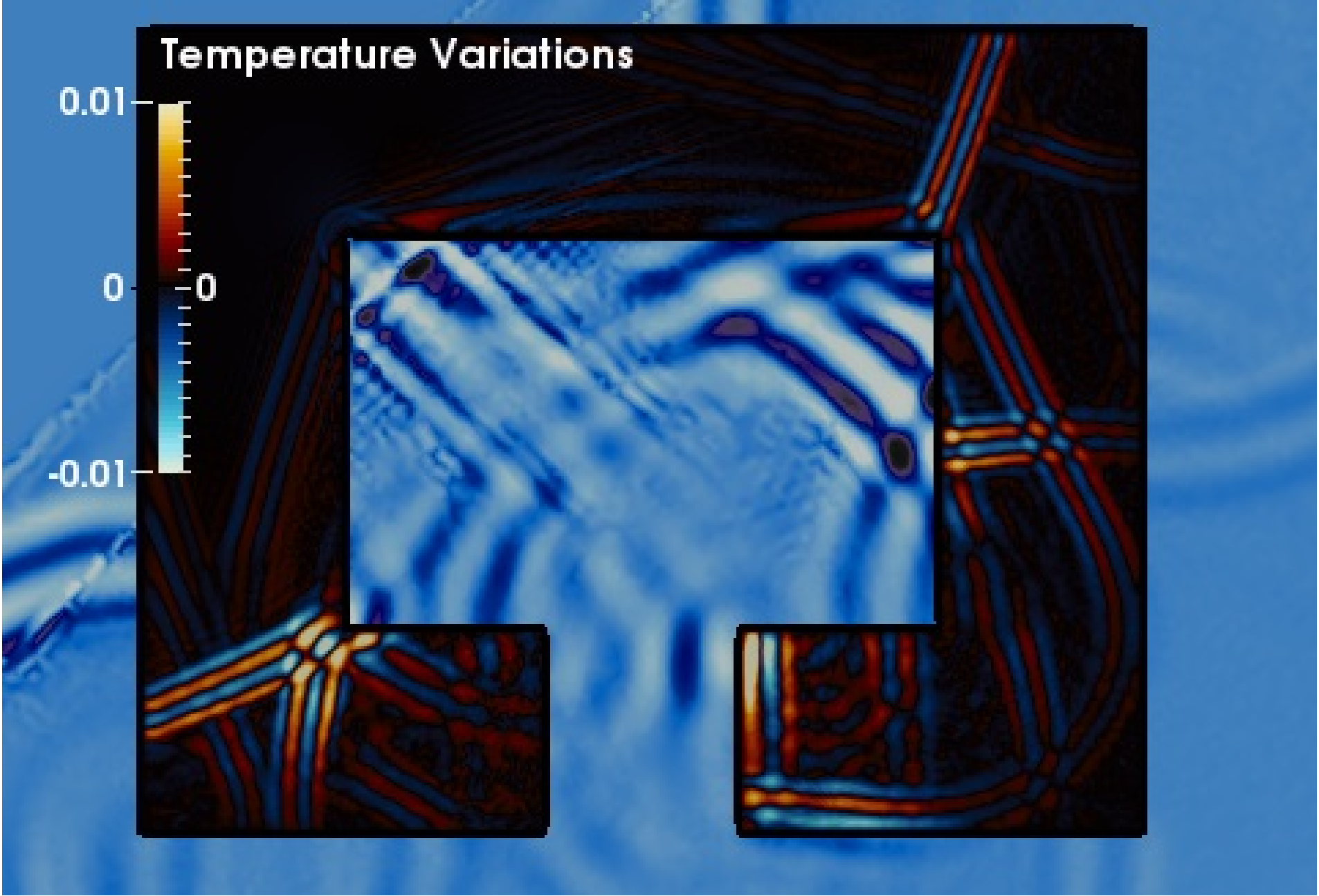}
\includegraphics[height =5cm]{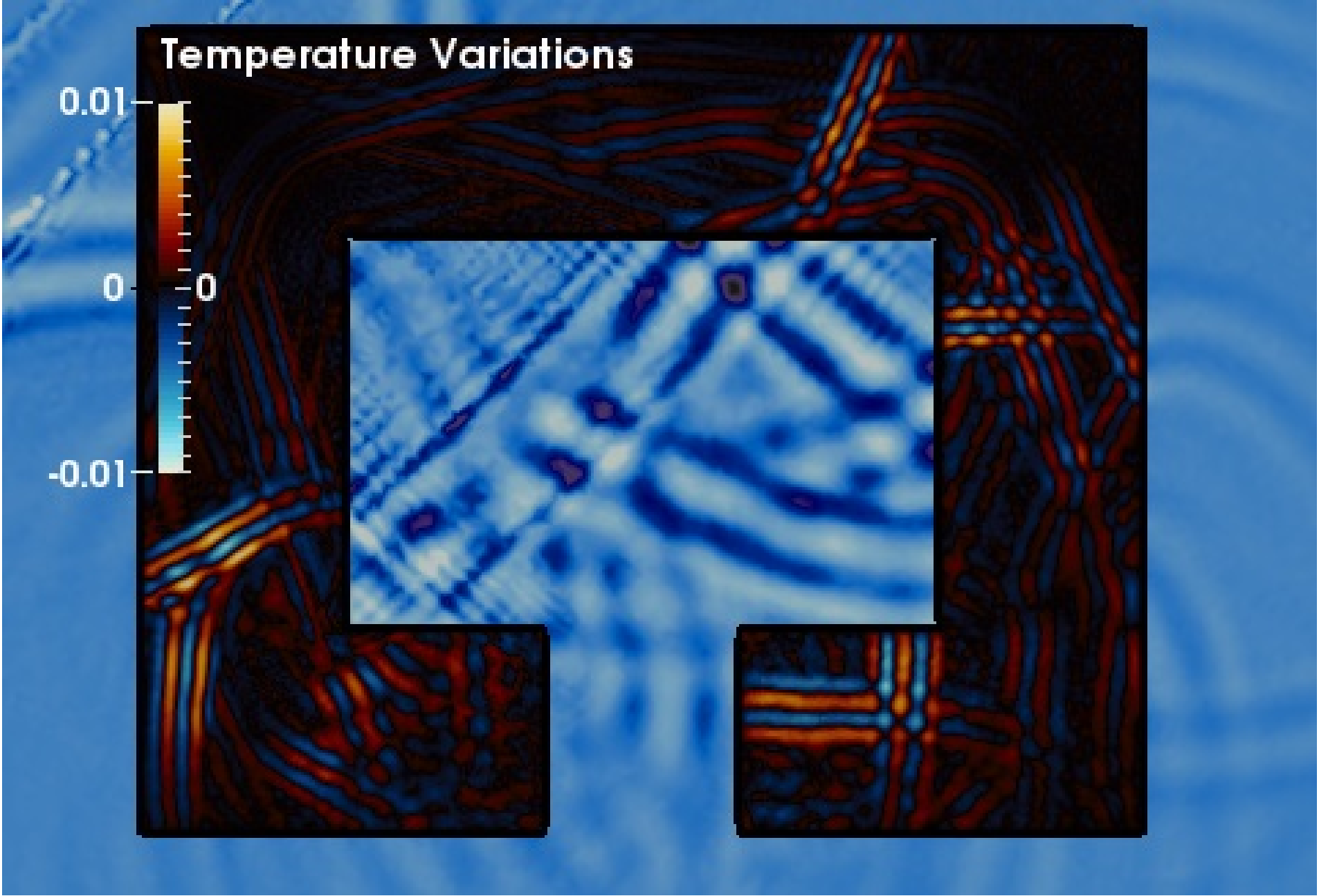}
\includegraphics[height =5cm]{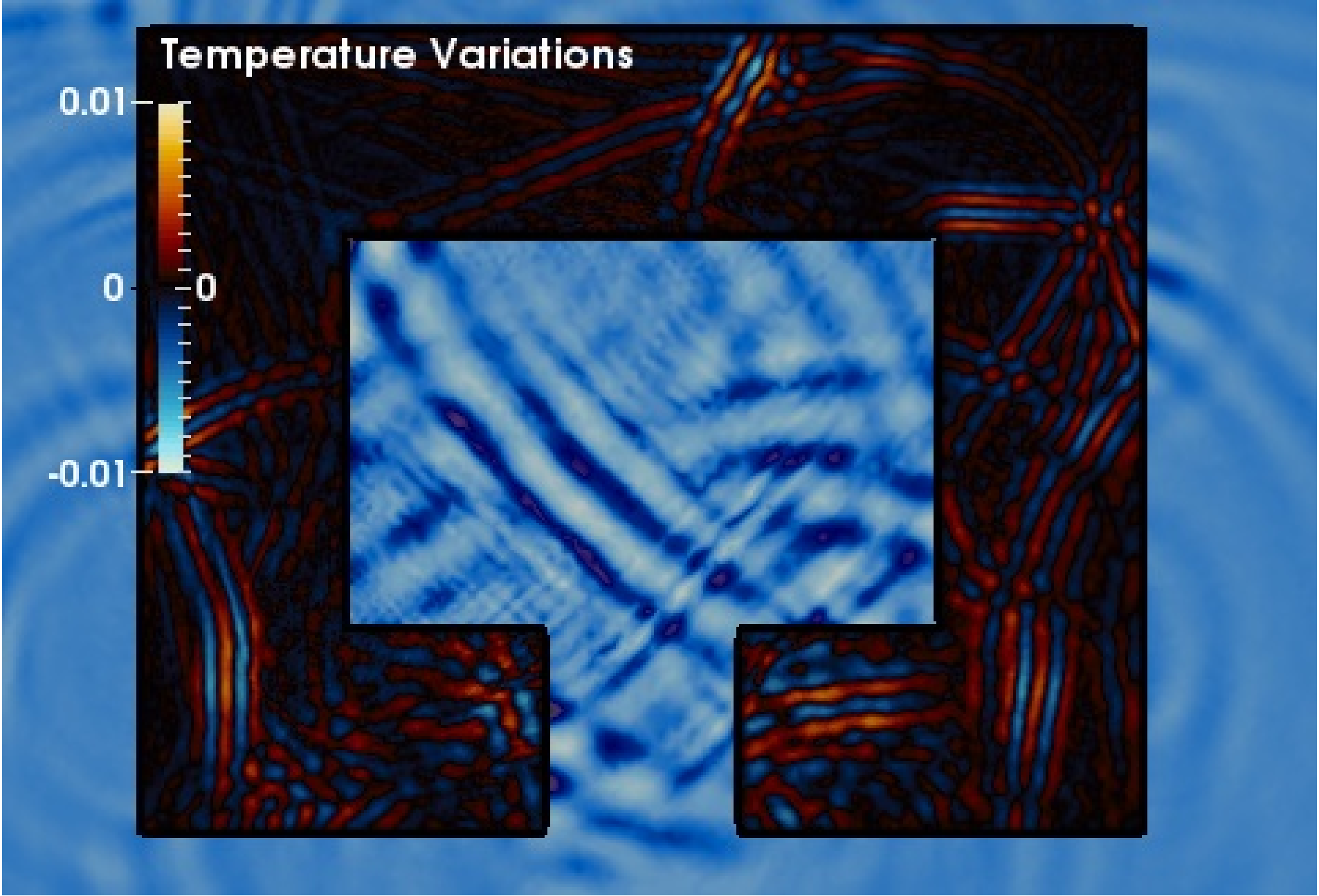}
\caption[Thermoelastic scattering by a pentagon]{{\footnotesize Close up of the norm of the temperature variations with respect to the reference configuration for times $t=0.3,0.6,0.9, 1.2,1.5,1.8$. Black represents zero, whereas shades of red and blue represent positive and negative variations respectively.}}\label{fig:c5:10}
}
\end{figure}

\end{document}